
\documentclass[aap]{imsart}

\usepackage{amsmath,amsthm,amssymb,amsfonts,epsfig,color,graphicx,enumerate,enumitem,accents}
\usepackage{rotating,multirow,color,enumerate,url,mathrsfs,mathrsfs}
\usepackage{arydshln}
\usepackage{bbm}
\usepackage{cases}
\usepackage{esint}
\usepackage{gensymb}
\usepackage{epstopdf}
\usepackage{comment}
\usepackage{leftidx}
\usepackage{graphicx}
\usepackage{mathtools}
\usepackage{subfig}
\usepackage[hidelinks]{hyperref}

\usepackage{float}
\usepackage{caption}
\tolerance=10000




\newtheorem{theorem}{Theorem}[section]
\newtheorem{corollary}[theorem]{Corollary}
\newtheorem{lemma}[theorem]{Lemma}
\newtheorem{proposition}[theorem]{Proposition}



\theoremstyle{definition}

\newtheorem{remark}[theorem]{Remark}
\newtheorem{example}[theorem]{Example}



\numberwithin{equation}{section}

%

\captionsetup[subfigure]{labelfont=rm}



\newcommand{\eps}{\varepsilon}

\newcommand{\R}{\mathbb{R}}

\newcommand{\Rd}{{\R^d}}

\newcommand{\ov}{\overline}

\newcommand{\argmin}{\operatorname*{arg\,min}}

\newcommand{\opnorm}[1]{{\left\vert\kern-0.25ex\left\vert\kern-0.25ex\left\vert #1 
		\right\vert\kern-0.25ex\right\vert\kern-0.25ex\right\vert}}

\newcommand{\argu}{{\,\cdot\,}}

\newcommand{\Mes}{{\mathcal{M}}}

\newcommand{\cost}{{\mathsf{c}}}

\newcommand{\pairing}[1]{{\left \langle #1 \right \rangle}}
\newcommand{\norm}[1]{\Arrowvert #1 \Arrowvert}
\newcommand{\abs}[1]{{\left \lvert #1 \right \rvert}}

\newcommand{\FF}{{G}}

\newcommand{\mres}{\mathbin{\vrule height 1.6ex depth 0pt width
		0.13ex\vrule height 0.13ex depth 0pt width 1.3ex}}

\newcommand{\one}{{{\bf 1}
		\kern-0,28em \rm l}}

\DeclareMathOperator{\spt}{spt}

\newcommand{\ba}{b}


\begin{document}

	\begin{frontmatter}
		\title{Bi-martingale optimal transport and its applications}
		\runtitle{Bi-martingale optimal transport}
		
		\begin{aug}
			\author{Karol Bołbotowski}
			
			\address{University of Warsaw, Faculty of Mathematics, Informatics and Mechanics}

		\end{aug}
		
		\begin{abstract} 	
			We introduce a new non-linear optimal transport formulation for a pair of probability measures on $\Rd$ 
			sharing a common barycentre, in which admissible transference plans satisfy two martingale-type constraints. This bi-martingale framework underlies and interconnects several variational problems on the space of probability measures. For the quadratic cost, it provides an optimal transport interpretation of the second Zolotarev distance on $\mathcal{P}_2(\Rd)$. For a broader class of convex costs, it leads to optimization problems under convex order constraints, encompassing in particular the Zolotarev projection onto the cone of dominating probability measures. As a main application, we construct a 
			$\Gamma$-convergent bi-martingale approximation of the classical martingale optimal transport problem. This scheme robustly accommodates deviations from convex order between the marginal distributions and overcomes 
			the well-known instability of MOT with respect to variations of the marginals in higher dimensions.
		\end{abstract}
		
		\begin{keyword}[class=MSC]
			\kwd[Primary ]{49Q22}
			\kwd{60E15}
			\kwd{60G42}
			\kwd[; secondary ]{49J55}
			\kwd{60A10}
		\end{keyword}
		
		\begin{keyword}
			\kwd{Monge--Kantorovich problem}
			\kwd{martingale optimal transport}
			\kwd{convex order}
			\kwd{Zolotarev distance}
			\kwd{Wasserstein projection}
		\end{keyword}

		\end{frontmatter}

	
	\maketitle

	

\section{Introduction}

\subsection{Selected topics on the space of probabilities $\mathcal{P}(\Rd)$}

Let us open the introduction with a recapitulation of several issues revolving around the space of probability measures on $\Rd$ such as optimal mass transport, metrics, and stochastic ordering. The purpose of this paper is to propose a new optimization framework that bridges these topics and provides a springboard for  computational methods.

\bigskip

\textit{Optimal transport and its martingale variant} \nopagebreak

\medskip

With $\mu$ and $\nu$ being probability measures on closed  subsets of $\Rd$, $X$ and $Y$ respectively, the Monge--Kantorovich formulation of optimal transport  consists in finding a plan of minimum  cost,
\begin{align*}
	\qquad  \qquad  \qquad \quad
	\inf\left\{ \int_X \! \int_Y c (x,y)\, \gamma(dxdy) \, : \, 	\gamma \in \Gamma(\mu,\nu)   \right\} \qquad  \qquad  \qquad   (\mathrm{OT}).
\end{align*}
Above $\Gamma(\mu,\nu)$ is a subset of probabilities on $X \times Y$ whose marginal distributions are $\mu$ and $\nu$. Optimal transport has countless applications and is a topic of ever growing interest, see the monographs \cite{villani,ambrosio2008,santambrogio2015}. Choosing the cost $c(x,y) = \abs{x-y}^p$ for $p \in [1,+\infty)$ yields the Wasserstein metric $W_p(\mu,\nu)$ on the space of probability measures of finite $p$-th moment, further denoted by $\mathcal{P}_p(\Rd)$.

Motivated by applications in finance, the authors of \cite{beiglbock2013} have introduced the variation of $(\mathrm{OT})$ called the \textit{martingale optimal transport},
\begin{align*}
	\qquad  \qquad  \qquad \quad
	\inf\left\{ \int_X \! \int_Y c (x,y)\, \gamma(dxdy) \, : \, 	\gamma \in \Gamma_{\mathrm{M}}(\mu,\nu)   \right\}  \qquad  \qquad  \quad (\mathrm{MOT}).
\end{align*}
The set $\Gamma_{\mathrm{M}}(\mu,\nu)$ consists of those transport plans $\gamma \in \Gamma(\mu,\nu)$ that satisfy the martingale condition,
\begin{equation*}
  \int_Y y \, \gamma_x(dy) = x\quad \text{for $\mu$-a.e. $x$},
\end{equation*}
where $x \mapsto \gamma_x$ is the \textit{transport kernel}, a Borel measure-valued map that constitutes the disintegration $\gamma(dxdy) = \mu(dx) \otimes \gamma_x(dy)$. As expounded in \cite{beiglbock2013}, the minimum in $(\mathrm{MOT})$ represents the lowest price of an exotic option with payoff $c$ for which no arbitrage opportunity occurs in the market.

Unlike $\Gamma(\mu,\nu)$, the set of martingale plans may be empty. Strassen theorem \cite{strassen1965} states that $\Gamma_{\mathrm{M}}(\mu,\nu)$ is non-empty if and only if $\nu$ dominates $\mu$ in the convex order, namely,
\begin{equation*}
	\int_\Rd \varphi \,d\mu \leq \int_\Rd \varphi \, d\nu \qquad\text{for all convex } \varphi:\Rd \to\R,
\end{equation*}
 written $\mu \preceq_c \nu$ for short.
A necessary condition, but by no means sufficient, is the agreement of the barycentres $[\mu] = [\nu]$, where $[\mu] := \int x \,\mu(dx)$.

\bigskip

\textit{Zolotarev distance as an alternative for Wasserstein} \nopagebreak

\medskip

In the paper \cite{zolotarev1978} Vladimir M. Zolotarev  introduced a family of metrics on the space of probabilities on $\Rd$. The Zolotarev distance of order $s= 1,2,\ldots$ reads  for $\mu,\nu \in \mathcal{P}_s(\Rd)$, 
\begin{equation*}
	Z_s(\mu,\nu) = \sup_{ u \in C^{s-1,1}(\Rd)}\left\{ \int_\Rd u \, d(\mu-\nu) \, : \,  \norm{D^{s-1}u(x) - D^{s-1}u(y)}  \leq \abs{x-y} \ \ \forall\, x,y \in \Rd \right\}
\end{equation*}
where $C^{k,1}(\Rd)$ is the space of functions whose $k$-th derivative is Lipschitz continuous, and $\norm{\argu}$ is the relevant operator norm. Strictly speaking, $Z_s$ is an extended metric when $s \geq\nolinebreak 2$ since $Z_s(\mu,\nu) < +\infty$ implies equality of the mixed moments of order up to $s-1$. For instance, the distance $Z_2$ should be defined on $\mathcal{P}_2^\ba(\Rd) := \big\{ \mu \in \mathcal{P}_2(\Rd) \, : \, [\mu] = \ba\big\}$, where $\ba \in \Rd$ is a generic but fixed barycentre.

The work \cite{belili2000} demonstrates that Zolotarev and Wasserstein distances $Z_s$ and $W_s$ control the same topology on the subspace of probabilities in $\mathcal{P}_s(\Rd)$ with prescribed mixed moments up to order $s-1 $. Moreover, $Z_s$ is an example of an \textit{ideal metric}: it is scale-$s$-homogeneous and it is subadditive under convolution, cf. \cite[Chapter 14]{rachev1991} for more details. These properties make $Z_s$ useful in obtaining sharp convergence rates in the Central Limit Theorem \cite{rio2009,fathi2021}. 

The famous Kantorovich--Rubinstein duality result states that $Z_1 = W_1$. It has an abundance of ramifications, one of which is the OT approach for the Beckmann problem \cite{bouchitte2001,dweik2019p}, a PDE formulation that is dual to $Z_1$. Few similar results are available for $s\geq2$. One example is \cite{hanin1994} where a family of optimal \textit{transshipment} problems is considered, see also \cite{rachev2000}. These, however, may suffer from the lack of solutions, unlike the transport formulations. 

In a recent paper \cite{bolbotowski2024kantorovich}, co-written by the present author, the Kantorovich--Rubinstein result has been brought to the second-order case $s=2$. For two measures $\mu,\nu \in \mathcal{P}_2^\ba(\Rd)$ sharing the barycentre $\ba $,  the distance
\begin{equation}
	\label{eq:Z2}
	Z_2(\mu,\nu) = \max\left\{ \int_\Rd u \, d(\mu-\nu) \, : \, u \in C^{1,1}(\Rd), \ \mathrm{lip}( \nabla  u) \leq 1 \right\}
\end{equation}
was recast as a minimum with respect to transport 3-plans $\varpi \in \mathcal{P}(X \times Y \times \Rd)$ with suitable marginal and martingale constraints (cf. Section \ref{ssec:3-plans} below). The motivation behind \cite{bolbotowski2024kantorovich} comes from the optimal design of mechanical structures. The PDE dual of $Z_2$ is a second-order counterpart of the Beckmann problem, and its solutions are symmetric-tensor-valued measures that encode configurations of beams constituting the stiffest bearing system of a ceiling. A partial characterization of optimal 3-plans was proposed in \cite{ciosmak2025}.

The duality result in \cite{bolbotowski2024kantorovich} paves the way to establishing new properties of the $Z_2$ distance. It was exploited directly in the subsequent paper \cite{bolbotowski2025}, establishing the inequalities comparing the Zolotarev-2 and Wasserstein-2 metrics,
\begin{equation}
	\label{eq:rio}
	\frac{1}{4}\, W_2^2(\mu,\nu) \ \leq \ Z_2(\mu,\nu) \ \leq \ \frac{1}{2} \, (\sigma_\mu + \sigma_\nu)\, W_2(\mu,\nu) \qquad \quad \forall\, \mu,\nu \in \mathcal{P}_2^\ba(\Rd),
\end{equation}
where $\sigma_\mu, \sigma_\nu$ stand for the standard deviations. The above constants are sharp. In dimension one, the lower bound was proven in \cite{rio2009}. These estimates allow to retrieve the topological equivalence between $Z_2$ and $W_2$ established already in \cite{belili2000}. Thus, on the space $ \mathcal{P}_2^\ba(\Rd)$ the convergence in $Z_2$ enjoys the characterization that is well known for Wasserstein-2,
\begin{equation}
	\label{eq:Z2_convergence}
	Z_2(\mu,\mu_n) \to 0 \qquad \Leftrightarrow \qquad \mu_n \rightharpoonup \mu, \qquad m_2(\mu_n) \to   m_2(\mu),
\end{equation}
where $m_2(\mu) =  \int\abs{x}^2 \mu(dx)$ is the second moment, and $\mu_n \rightharpoonup \mu$ stands for the weak convergence of measures, i.e. $\int \varphi \,d\mu_n \to \int \varphi \,d\mu$ for every continuous bounded function $\varphi \in C_b(\Rd)$.

\bigskip

\textit{Optimization with stochastic order constraints} \nopagebreak

\medskip

The work \cite{dentcheva2003optimization} has initiated a new line of research on optimization in mathematical finance by incorporating constraints in the form of stochastic order between random vectors, see e.g. \cite{dentcheva2015,gutjahr2016} for further developments. In particular cases, the problem can be written in terms of the laws only, 
\begin{equation}
	\label{eq:stochastic}
	\inf \left\{\int_\Rd f \, d\rho \, : \, \rho \in \mathcal{P}_1(\Rd), \ \ \rho \succeq_{\mathrm{st}} \mu_i, \ \ i =1,\ldots,n \right\},
\end{equation}
where $f:\Rd \to \R$ is a Borel cost function, and $\mu_i$ are given reference probabilities.
The stochastic order $\succeq_{\mathrm{st}}$ means that $\int \varphi\,d\rho \geq \int \varphi \, d\mu_i$ for every $\varphi$ within a certain  class $\mathcal{F}$ of real functions  \cite{shaked2007,muller2017}. The convex order is one example, subharmonic order is another. For instance, to implement risk-averse investing strategies, $\mathcal{F}$ should be chosen as the set of increasing concave functions ($d=1$).

This paper will focus on the stochastic optimization for convex order only. The most important instance of this problem will involve two reference probabilities  and a quadratic cost,
\begin{equation}
	\label{eq:minDom}
	\mathcal{C}(\mu,\nu) := \inf\Big\{ m_2(\rho) \, : \, \rho \in \mathcal{P}_2(\Rd), \ \ \rho \succeq_c \mu, \ \ \rho \succeq_c \nu \Big\},
\end{equation}
where $m_2(\rho) = \int_\Rd \abs{z}^2 \rho(dz)$. Its significance stems from the connection with Zolotarev-2 distance, obtained as a by-product of the second-order Kantorovich--Rubinstein duality \cite{bolbotowski2024kantorovich},
\begin{equation}
	\label{eq:Z2_C}
	Z_2(\mu,\nu) = \mathcal{C}(\mu,\nu) - \frac{m_2(\mu)+ m_2(\nu)}{2}.
\end{equation}

\bigskip

\textit{Projection onto the cone of probabilities dominating for convex order} \nopagebreak

\medskip

As mentioned above, well posedness of martingale optimal transport entails convex order between the marginals. This calls for careful sampling techniques when tackling $(\mathrm{MOT})$ numerically, as the convex order that holds for original probabilities can be lost upon discretization. The authors of \cite{alfonsi2020sampling} proposed  the \textit{Wasserstein projection} approach as a post-sampling fix of convex order, see also \cite{gozlan2020,kim2024}. Assuming that the convex order is corrupted, $\mu \npreceq_c\nu$, it consists in solving the problem,
\begin{align*}
		 \min \Big\{ W_2(\nu,\rho) \, :\, \rho \in \mathcal{P}_2(\Rd), \  \rho \succeq_c \mu  \Big\}.
\end{align*}
Its solution $\hat\rho$ is the Wasserstein-2 projection of $\nu$ onto the convex cone of measures that dominate $\mu$, also called  the \textit{forward projection}. The idea is then to replace $\nu$ with $\hat\rho$ and subsequently solve $(\mathrm{MOT})$ for the ordered measures $\mu \preceq_c \hat\rho$. 
Alternatively, one can compute the \textit{backward projection}: of $\mu$ onto the backward cone $\{ \argu \preceq_c \nu\}$.

If, despite  the lack of convex order, the probabilities share their barycentre, the aforementioned equivalence between Wasserstein-2 and Zolotarev-2 metrics inspires to investigate the projection technique with  $W_2$ replaced by $Z_2$. In this paper, we will study only the forward Zolotarev projection,
\begin{equation*}
	\Pi_{\succeq_c \mu} (\nu) :=  \argmin\limits_{\rho \succeq_c \mu} Z_2(\nu,\rho).
\end{equation*}
The equality \eqref{eq:Z2_C} gives away that the $Z_2$ metric itself is intrinsically linked to convex order, a feature not \textit{a priori} shared by $W_2$. This factor will prove to play into the properties of the Zolotarev projection.

\subsection{Bi-martingale optimal transport as the unifying framework}
\label{ssec:bimatringale_intro}

The topics covered above have several  common threads, e.g. convex order. In what follows we show that they all can be encompassed by a single optimization formulation  -- a new variant of optimal transport with two martingale constraints.

For $\mu,\nu \in \mathcal{P}_1^\ba(\Rd)$
we shall say that  a plan $\gamma \in \Gamma(\mu,\nu)$ is \textit{bi-martingale}\footnote{In \cite{ciosmak2025} a special case of bi-martingale transport is considered, when $\zeta(x,y) = \pi_{V_1} (x) + \pi_{V_2} (y)$, with the projections onto mutually orthogonal and complementing spaces $V_1 \oplus V_2 = \Rd$. Existence of bi-martingales $\gamma$ with respect to such coupling maps $\zeta$ require that the pair $\mu,\nu$ is in \textit{convex-concave order} with respect to $V_1,V_2$, cf. \cite{ciosmak2025} for details.} with respect to a Borel \textit{coupling map}\footnote{Not to be confused with the usual notion of transport map $T:X \to Y$.} $\zeta: X \times Y \to \Rd$ whenever
\begin{align}
	\label{eq:two_martingales}
	\gamma_i = (\pi_i,\zeta)\# \gamma \text{ is a martingale plan for $i=1,2$},
\end{align}
where $\pi_1(x,y)  = x$, $\pi_2(x,y)  = y$ are projections, and $\#$ is the push forward of a measure. Performing disintegrations with respect to the two marginals, $\gamma(dxdy) = \mu(dx) \otimes \gamma_x(dy)= \gamma_y(dx) \otimes \nu(dy)$, the condition \eqref{eq:two_martingales} can be equivalently put as,
\begin{equation}
	\label{eq:2xmartingale_condition}
	\begin{cases}
		\int_Y \zeta(x,y) \, \gamma_x(dy) = x & \ \ \text{for $\mu$-a.e. $x$},  \\  
		  \int_X \zeta(x,y) \, \gamma_y(dx) = y & \ \ \text{for $\nu$-a.e. $y$}.
	\end{cases}
\end{equation}
The common barycentre $\ba = [\mu] = [\nu]$ is a necessary and sufficient condition for existence of bi-martingale pairs $\gamma,\zeta$. An admissible pair can always be chosen as $\gamma = \mu\otimes \nu$ and $\zeta(x,y)= x+y-\ba$. An example of a bi-martingale plan is shown in Fig. \ref{fig:bimart}.

		\begin{figure}[h]
	\centering
	\includegraphics*[trim={0cm 0cm -0cm -0cm},clip,width=0.4\textwidth]{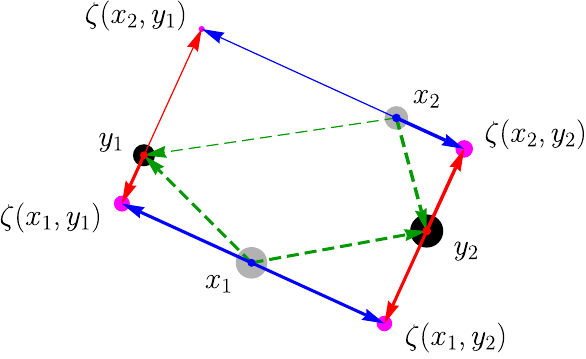}
	\caption{Example of a bi-martingale plan $\gamma$ (green dashed lines) with respect to a coupling map $\zeta$ for two-point data $\mu$ (gray) and $\nu$ (black). The martingale plans $\gamma_1 = (\pi_1,\zeta) \# \gamma$ (blue) and $\gamma_2 = (\pi_2,\zeta) \# \gamma$ (red) share the second marginal $\rho = \zeta \# \gamma$ (magenta) that dominates both $\mu$ and $\nu$ for convex order.}
	\label{fig:bimart}      
\end{figure}

The coupling map $\zeta = \zeta(x,y)$ will be involved  in the transport cost. To that end, we take a 3-point cost function that is jointly lower semi-continuous and convex in the third argument.
\begin{equation*}
	X \times Y \times \Rd \ni (x,y,z) \  \mapsto \ \cost(x,y,z) \in \R \cup \{+\infty\}.
\end{equation*}
The bi-martingale optimal transport problem consists in searching both $\gamma$ and $\zeta$,
\begin{align}
	\label{eq:M2OT_zeta}
	\inf\left\{ \int_X \! \int_Y \cost  \big(x,y,\zeta(x,y) \big) \gamma(dxdy) \ : \ 	\gamma \in \Gamma(\mu,\nu), \   \zeta \in L^1_\gamma(X\times Y;\Rd), \  \text{\eqref{eq:two_martingales} is satisfied} \right\}
\end{align}
The natural choices of the cost $\cost$ are as follows: 
\begin{enumerate}[label={(\Alph*)}]
	\item With a lower semi-continuous cost $c: X\times Y \to \R \cup \{+\infty\}$, take
	\begin{equation*}
		\cost(x,y,z) = c(x,y) + \chi_{\{y\}}(z),
	\end{equation*}
	where $ \chi_{\{y\}}(z)$ equals zero if $z=y$ and $+\infty$ otherwise. Clearly, finiteness of the cost in \eqref{eq:M2OT_zeta} enforces that $\zeta = \pi_2 = y$. For such a cost the second condition in \eqref{eq:2xmartingale_condition} is void, whilst the first one is the standard martingale condition. In this manner we are able to recast the classical $(\mathrm{MOT})$ through the bi-martingale framework.
	\item  Cost that are independent of $x,y$: for a convex function $f:\Rd \to \R \cup \{+\infty\}$  consider,
	\begin{equation*}
		\cost(x,y,z) = f(z).
	\end{equation*}
	For such a cost the bi-martingale formulation will prove equivalent to the optimal convex dominance of type \eqref{eq:stochastic} via the change of variable,
	\begin{equation*}
		\rho = \zeta \# \gamma.
	\end{equation*}
	The dominance $\rho \succeq_c \mu$, $\rho \succeq_c \nu$ can be deduced from Strassen theorem. Indeed, both martingale plans in \eqref{eq:two_martingales} have $\rho$ as their second marginal, i.e. $\gamma_1 \in \Gamma_{\mathrm{M}}(\mu, \rho)$, 	$\gamma_2 \in \Gamma_{\mathrm{M}}(\nu, \rho)$, cf. Fig. \ref{fig:bimart}.
	
	\item Inspired by the Wasserstein distance we can propose,
	\begin{equation*}
		\cost(x,y,z) = \frac{1}{p}\abs{z-x}^p + \frac{1}{p}\abs{z-y}^p, \qquad p\in\, [1,+\infty).
	\end{equation*}
	It will turn out that for $p=2$ we retrieve the metric $Z_2(\mu,\nu)$ as the minimal transport cost.
\end{enumerate}

\medskip

The bi-martingale problem \eqref{eq:M2OT_zeta} is not convex with respect to the pair $(\gamma,\zeta)$. However, a simple change of variables facilitates its convex reformulation. It is a technique known from the Benamou-Brenier \cite{benamou2000} dynamical $(\mathrm{OT})$. Consider the set of vector-valued coupling measures,
\begin{align*}
	Q(\mu,\nu) &:= \Big\{ q \in\Mes(X \times Y;\Rd ) \,:\, \pi_1 \# q = x\mu, \ \
	\pi_2 \# q= y \nu  \Big\}.
\end{align*}
We are interested in plans $\gamma$ in conjunction with absolutely continuous couplings $q$,
\begin{equation}
	\label{eq:GammaQ}
	\Gamma Q(\mu,\nu) := \Big\{ (\gamma,q) \in \Gamma(\mu,\nu)  \times Q(\mu,\nu) \, : \, q \ll \gamma \Big\}.
\end{equation}
One checks easily that, for any pair $(\gamma,q)$ in this set, $\gamma$ is a bi-martingale plan with respect to the Radon--Nikodym derivative $\zeta = \frac{dq}{d\gamma}$.
Readily, the convex formulation of the bi-martingale optimal transport reads,
\begin{align*}
	\qquad  \qquad  
	\inf\left\{ \int_X \! \int_Y \cost  \Big(x,y,\frac{d q}{d\gamma}(x,y) \Big) \gamma(dxdy) \ : \ 	(\gamma,q) \in \Gamma Q(\mu,\nu)  \right\} \qquad   (\mathrm{M^2OT}).
\end{align*}
To ensure well-posedness, we must assume that $\cost$ is superlinear in the third variable. This is crucial for providing stability of the absolute continuity assumption $ q \ll \gamma $. It is worth noting that problems focusing on optimizing vector-valued coupling measures are studied in their own right, see \cite{ciosmak2021}.

The convexity of $(\mathrm{M^2OT})$ raises the natural question of the dual problem. It bears resemblance to the dual of the classical $(\mathrm{MOT})$, where one looks for two scalar potentials $\varphi,\psi$ and a vector function $\Phi:\Rd \to \Rd$, cf. \cite{beiglbock2013}. However, the dual of $(\mathrm{M^2OT})$ is symmetric, which manifests itself in the form of another vectorial variable $\Psi$. Contrarily to the dual of $(\mathrm{MOT})$, where attainment is a highly delicate subject \cite{beiglbock2016,beiglbock2017complete,demarch2019}, the existence of a continuous quadruple $\varphi,\psi,\Phi,\Psi$ solving the dual of $(\mathrm{M^2OT})$ is well in reach under reasonable assumptions, e.g. on the smoothness of $\cost(x,y,\argu)$.

The subject of duality is intimately related to the numerical treatment of $(\mathrm{M^2OT})$. For instance, adding the entropic regularization term would lead to a variant of the Sinkhorn algorithm \cite{cuturi2013}, the state of  art in computational optimal transport. It is a block coordinate ascent method, and for $(\mathrm{M^2OT})$ the blocks of dual variables should be $(\varphi,\Phi)$ and $(\psi,\Psi)$.

In order to prevent the paper from branching out, we shall not elaborate either on duality or its numerical applications. Nonetheless, to give a flavour of the computational potential of $(\mathrm{M^2OT})$, in the example Section \ref{sec:examples} we will present its conic reformulation for discrete data, which then can be handled by  modern convex optimization solvers.

\subsection{Second-order Kantorovich--Rubinstein duality via $(\mathrm{M^2OT})$}

The first main result in this paper combines three topics touched in this introduction: Zolotarev-2 distance, optimal stochastic dominance, and the Zolotarev projection  $\Pi_{\succeq_c \mu}$. In particular, it translates the Kantorovich--Rubinstein duality for $Z_2$ developed in \cite{bolbotowski2024kantorovich} to the bi-martingale framework while providing new connection with the Zolotarev projection.
\begin{theorem}
	\label{thm:RK_reinvented}
	The following identities hold true for $\mu,\nu \in \mathcal{P}_2(\Rd)$ that share the barycentre $[\mu] = [\nu]$,
\begin{align*}
	Z_2(\mu,\nu) &= \min\left\{ \int_X \! \int_Y \!\frac{1}{2} \bigg( \Big| \frac{dq}{d\gamma}(x,y) - x \Big|^2\! \!\!+ \Big| \frac{dq}{d\gamma}(x,y) - y \Big|^2  \bigg)   \gamma(dxdy) \, : \, 	(\gamma,q) \in \Gamma Q(\mu,\nu)  \right\}, \\
	\mathcal{C}(\mu,\nu) &=  \min\left\{ \int_X \! \int_Y \Big| \frac{dq}{d\gamma}(x,y) \Big|^2   \gamma(dxdy) \, : \, 	(\gamma,q) \in \Gamma Q(\mu,\nu)  \right\},
\end{align*}
and both problems share a non-empty set of solutions.
Moreover, the following conditions are equivalent for any pair $(\gamma,q) = (\gamma,\zeta \gamma) \in \Gamma Q(\mu,\nu)$:
\begin{enumerate}[label={(\roman*)}]
	\item $(\gamma,\zeta\gamma)$ solves either of the two problems above;
	\item for any optimal potential $\ov{u}$ for the Zolotarev problem \eqref{eq:Z2} there holds $\zeta = \ov\zeta$ \ $\gamma$-a.e., where $\ov\zeta: \Rd \times \Rd \to \Rd$ is a Lipschitz continuous coupling map as below,
	\begin{equation}
			\label{eq:ovzeta}
		\ov\zeta(x,y) := \frac{x+y}{2} + \frac{\nabla \ov{u}(x) - \nabla \ov{u}(y)}{2};
	\end{equation}
		\item ${\rho} = {\zeta} \# {\gamma}$ is a convex dominant of the least second moment, namely $ {\zeta} \# {\gamma}$ solves   \eqref{eq:minDom};
		\item ${\rho} = {\zeta} \# {\gamma}$ is a Zolotarev projection of $\nu$ onto the cone $\{ \argu \succeq_c \mu\}$, namely $ {\zeta} \# {\gamma} \in \Pi_{\succeq_c \mu} (\nu)$;
		\item ${\rho} = {\zeta} \# {\gamma}$ is a Zolotarev projection of $\mu$ onto the cone $\{ \argu \succeq_c \nu\}$, namely $ {\zeta} \# {\gamma} \in \Pi_{\succeq_c \nu} (\mu)$.
\end{enumerate}
\end{theorem}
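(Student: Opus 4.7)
My strategy is to first reduce both identities to a single one, $\min\int|\zeta|^2\,d\gamma = \mathcal{C}(\mu,\nu)$, and then to recover the five equivalences from the resulting correspondence. For the reduction, I would use the bi-martingale conditions \eqref{eq:2xmartingale_condition} directly: for any $(\gamma,q=\zeta\gamma)\in\Gamma Q(\mu,\nu)$, disintegration against $\mu$ gives $\int \zeta\cdot x\,d\gamma = m_2(\mu)$ and against $\nu$ gives $\int \zeta\cdot y\,d\gamma = m_2(\nu)$. Expanding the squares then yields
\begin{equation*}
\int \tfrac{1}{2}\bigl(|\zeta-x|^2+|\zeta-y|^2\bigr)\,d\gamma = \int|\zeta|^2\,d\gamma - \tfrac{1}{2}\bigl(m_2(\mu)+m_2(\nu)\bigr),
\end{equation*}
so the two objectives differ by a constant on $\Gamma Q(\mu,\nu)$ and share their minimizers; together with \eqref{eq:Z2_C}, the first identity reduces to the second.

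To prove $\min\int|\zeta|^2\,d\gamma = \mathcal{C}(\mu,\nu)$, I would exhibit two matching constructions based on Strassen's theorem. For ``$\geq$'': given a bi-martingale pair $(\gamma,\zeta\gamma)$, set $\rho:=\zeta\#\gamma$; the conditions \eqref{eq:2xmartingale_condition} force $(\pi_i,\zeta)\#\gamma\in\Gamma_{\mathrm{M}}(\mu_i,\rho)$ with $\mu_1=\mu,\mu_2=\nu$, hence $\rho\succeq_c\mu,\nu$ by Strassen, and $\int|\zeta|^2\,d\gamma=m_2(\rho)\geq\mathcal{C}(\mu,\nu)$. For ``$\leq$'': given admissible $\rho$, Strassen yields $\gamma_i\in\Gamma_{\mathrm{M}}(\mu_i,\rho)$, which I disintegrate with respect to $\rho$ and glue into $\tilde\gamma(dxdydz):=\rho(dz)\otimes(\gamma_1)_z(dx)\otimes(\gamma_2)_z(dy)$; setting $\gamma:=(\pi_1,\pi_2)\#\tilde\gamma$ and $\zeta(x,y):=\mathbb{E}_{\tilde\gamma}[z\mid x,y]$, the tower property of conditional expectation recovers \eqref{eq:2xmartingale_condition}, while Jensen's inequality gives $\int|\zeta|^2\,d\gamma\leq m_2(\rho)$. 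Attainment of $\mathcal{C}(\mu,\nu)$ is standard (bounded second moments force tightness; convex order and $m_2$ are stable under narrow convergence with uniformly bounded $m_2$), and propagates to both variational problems through this correspondence.

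The equivalence (i)$\Leftrightarrow$(iii) is then immediate: $(\gamma,\zeta\gamma)$ is optimal iff $\rho=\zeta\#\gamma$ is admissible for \eqref{eq:minDom} with $m_2(\rho)=\mathcal{C}(\mu,\nu)$, iff $\rho$ solves \eqref{eq:minDom}. For (iii)$\Leftrightarrow$(iv) I would apply \eqref{eq:Z2_C} to $(\nu,\rho)$ whenever $\rho\succeq_c\mu$ (noting that $\rho\succeq_c\mu$ together with $[\mu]=[\nu]$ forces $[\rho]=[\nu]$) and combine the lower bounds $\mathcal{C}(\nu,\rho)\geq m_2(\rho)$ and $\mathcal{C}(\nu,\rho)\geq\mathcal{C}(\mu,\nu)$ — the latter because every convex dominant of $\nu$ and $\rho$ automatically dominates $\mu$. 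Splitting on whether $m_2(\rho)\leq\mathcal{C}(\mu,\nu)$ or $m_2(\rho)\geq\mathcal{C}(\mu,\nu)$ then yields the sharp bound $Z_2(\nu,\rho)\geq\tfrac{1}{2}(\mathcal{C}(\mu,\nu)-m_2(\nu))$, with equality iff $m_2(\rho)=\mathcal{C}(\mu,\nu)$ and $\rho\succeq_c\nu$; the last condition is extracted from the equality case $\mathcal{C}(\nu,\rho)=m_2(\rho)$ via the standard observation that $\rho''\succeq_c\rho$ with $m_2(\rho'')=m_2(\rho)$ forces $\rho''=\rho$. Symmetry gives (iii)$\Leftrightarrow$(v).

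The subtlest step is (i)$\Leftrightarrow$(ii), where I would invoke the second-order Kantorovich--Rubinstein duality of \cite{bolbotowski2024kantorovich}. In the symmetric dual of the bi-martingale problem with cost $\tfrac{1}{2}(|z-x|^2+|z-y|^2)$, the vector multipliers enforcing the two martingale constraints can be identified with $\nabla\bar u$ and $-\nabla\bar u$ for an optimal potential $\bar u$ in \eqref{eq:Z2}, and the pointwise first-order condition in $z$ recovers exactly the formula \eqref{eq:ovzeta} for $\bar\zeta$; complementary slackness then forces $\zeta=\bar\zeta$ $\gamma$-a.e. The main technical hurdle I anticipate is translating the 3-plan primal of \cite{bolbotowski2024kantorovich} into the bi-martingale framework: this passes through the deterministic 3-plan $\varpi:=\gamma\otimes\delta_{\zeta(x,y)}$, which realises the infimum among 3-plans with prescribed $(x,y)$-marginal by Jensen's inequality and thereby inherits both optimality and the characterization \eqref{eq:ovzeta} of its support.
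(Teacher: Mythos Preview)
Your proposal is correct and largely parallels the paper's own argument. The constant-shift reduction between the two objectives is exactly Corollary~\ref{cor:RK_duality}; your Strassen--gluing--Jensen proof of $\min\int|\zeta|^2\,d\gamma=\mathcal{C}(\mu,\nu)$ is the content of Proposition~\ref{prop:rhozetagamma} (the paper routes it through the 3-plan equivalence of Proposition~\ref{prop:gammaq-omega}, but your conditional-expectation version is the same computation); and (i)$\Leftrightarrow$(iii) is read off from this correspondence in both treatments.

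Two places where the routes differ. For (iii)$\Leftrightarrow$(iv)$\Leftrightarrow$(v), the paper (Theorem~\ref{thm:Zol_proj_again}) first establishes the triangle identity $Z_2(\mu,\nu)=Z_2(\mu,\bar\rho)+Z_2(\bar\rho,\nu)$ for $\bar\rho\in\mu\curlyvee\nu$ and then argues via the closed formula \eqref{eq:Z2_ordered} for ordered pairs; your case-split on $m_2(\rho)$ versus $\mathcal{C}(\mu,\nu)$, combined with the two lower bounds on $\mathcal{C}(\nu,\rho)$, is a more self-contained alternative that avoids the triangle equality and reaches the same equality characterization. For (i)$\Leftrightarrow$(ii), the paper (Proposition~\ref{prop:ovGammaQ_char}) does not set up the symmetric dual you sketch; instead it tests \eqref{eq:moment-martingale} with $\varphi=-\psi=\bar u$, $\Phi=-\Psi=\nabla\bar u$ and invokes the pointwise three-point inequality \eqref{eq:3-point_ineq}, noting that $\bar\zeta(x,y)$ is the unique minimizer of its gap. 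This is your complementary-slackness argument executed pointwise, so the multiplier identification $\Phi=\nabla\bar u=-\Psi$ is made by hand rather than derived from an abstract dual, and the 3-plan translation you flag as a hurdle is bypassed entirely.
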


The characterization (ii) exposes the rigidity of the optimal coupling map and, at the same time, a freedom in choosing the bi-martingale  with respect to that map. For any solution $\ov{u}$ of the Zolotarev problem \eqref{eq:Z2_again} define the non-empty  set of bi-martingale plans with respect to the coupling map $\ov\zeta$,
\begin{equation}
	\label{eq:ovGamma}
	\ov\Gamma(\mu,\nu) := \left\{ \gamma \in \Gamma(\mu,\nu) \, : \, \begin{array}{ll}
		\int \ov\zeta(x,y) \, \gamma_x(dy) = x & \ \ \text{$\mu$-a.e.}  \\  
		\int \ov\zeta(x,y) \, \gamma_y(dx) = y & \ \ \text{$\nu$-a.e.}
	\end{array} \right\}.
\end{equation}
Theorem \ref{thm:RK_reinvented} states that solutions of $(\mathrm{M^2OT})$ for the costs $\cost(x,y,z) = \frac{1}{2}\big(\abs{z-x}^2+\abs{z-y}^2\big)$ or $\cost(x,y,z) = \abs{z}^2$ are exactly the pairs $\gamma \in \ov\Gamma(\mu,\nu)$, $q =\ov\zeta \gamma$. In particular, the set $\ov\Gamma(\mu,\nu)$ does not depend on the choice of the optimal potential $\ov{u}$, which may be non-unique. Once the marginals are in convex order, the set consists exactly of martingale plans, cf. Proposition \ref{prop:cvx_order_char} in the text,
\begin{equation}
	\label{eq:Gammabar_cvxorder}
	\mu \preceq_c \nu \qquad \Leftrightarrow \qquad  \ov\Gamma(\mu,\nu) = \Gamma_{\mathrm{M}}(\mu,\nu).
\end{equation}
We readily see that the equivalence (i) $\Leftrightarrow$ (ii) propagates to unordered data  the famous fact\footnote{For any $\gamma \in \Gamma_{\mathrm{M}}(\mu,\nu)$ the total quadratic cost equals $\iint \abs{x-y}^2 d\gamma = m_2(\nu) - m_2(\mu)$, and so it is invariant of the joint distribution $\gamma$.} that $(\mathrm{MOT})$ is trivial for the quadratic cost $c(x,y) = \abs{x-y}^2$, cf. the introduction of \cite{beiglbock2016}.
\smallskip

The equivalence of conditions (iii) $\Leftrightarrow$ (iv) $\Leftrightarrow$ (v) is a consequence of a new result that can be deemed of independent interest:
\begin{proposition}
	\label{prop:Zol_proj}
	Assume that $\mu,\nu \in\mathcal{P}_2(\Rd)$ share  the barycentre. Then,
	\begin{align*}
		\Pi_{\succeq_c \mu} (\nu)   = 	\Pi_{\succeq_c \nu} (\mu)  =   \argmin_{ \rho \succeq_c \mu, \  \rho \succeq_c \nu} m_2(\rho).
	\end{align*}
\end{proposition}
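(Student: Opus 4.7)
The plan is to leverage the identity \eqref{eq:Z2_C}, i.e.\ $Z_2(\mu,\nu) = \mathcal{C}(\mu,\nu) - \tfrac{1}{2}m_2(\mu) - \tfrac{1}{2}m_2(\nu)$, to rewrite the forward Zolotarev projection $\Pi_{\succeq_c\mu}(\nu)$ as a problem expressed purely in terms of $\mathcal{C}$ and second moments. The second convex-order constraint in the right-hand side of the proposition, which is absent from the definition of $\Pi_{\succeq_c\mu}(\nu)$, will then emerge naturally from an equality case in an elementary lower bound.

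Concretely, for any admissible $\rho$ (that is, $\rho\succeq_c\mu$ sharing the barycentre with $\nu$), I would apply \eqref{eq:Z2_C} to the pair $(\nu,\rho)$ and then split
\[
\mathcal{C}(\nu,\rho) - \tfrac{1}{2}m_2(\rho) \;=\; \tfrac{1}{2}\mathcal{C}(\nu,\rho) \;+\; \tfrac{1}{2}\bigl(\mathcal{C}(\nu,\rho) - m_2(\rho)\bigr).
\]
Each summand admits a clean lower bound. For the first: any $\sigma$ admissible in $\mathcal{C}(\nu,\rho)$ obeys $\sigma\succeq_c\nu$ and, by transitivity of $\preceq_c$, $\sigma\succeq_c\mu$; hence $m_2(\sigma)\geq\mathcal{C}(\mu,\nu)$, so $\mathcal{C}(\nu,\rho)\geq\mathcal{C}(\mu,\nu)$. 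For the second: since $z\mapsto\abs{z}^2$ is convex, $\sigma\succeq_c\rho$ forces $m_2(\sigma)\geq m_2(\rho)$, whence $\mathcal{C}(\nu,\rho)\geq m_2(\rho)$, with equality if and only if $\rho$ itself is admissible for $\mathcal{C}(\nu,\rho)$, i.e.\ $\rho\succeq_c\nu$. Combining the two estimates gives
\[
Z_2(\nu,\rho) \;\geq\; \tfrac{1}{2}\bigl(\mathcal{C}(\mu,\nu) - m_2(\nu)\bigr),
\]
uniformly in admissible $\rho$.

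The minimizer set $\Pi_{\succeq_c\mu}(\nu)$ therefore coincides with the equality set. Equality in the second bound forces $\rho\succeq_c\nu$, after which the first bound collapses to $m_2(\rho)=\mathcal{C}(\mu,\nu)$. Together with the standing constraint $\rho\succeq_c\mu$, this is exactly $\rho\in\argmin\{m_2(\rho): \rho\succeq_c\mu,\ \rho\succeq_c\nu\}$. As the characterization is symmetric under $\mu\leftrightarrow\nu$, the very same set equals $\Pi_{\succeq_c\nu}(\mu)$. Non-emptiness of all three sets is guaranteed by the attainment of $\mathcal{C}(\mu,\nu)$ asserted in Theorem \ref{thm:RK_reinvented} (alternatively, one can invoke directly that the admissible set in \eqref{eq:minDom} is weakly closed under convex order and $m_2$ is weakly lower semi-continuous with tight sublevel sets). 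The only conceptual hurdle is spotting the decomposition that isolates the two convex-order constraints in separate summands; the rest is bookkeeping.
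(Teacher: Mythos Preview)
Your argument is correct and, once unpacked, coincides with the paper's proof: the paper introduces an auxiliary minimizer $\breve\rho \in \nu \curlyvee \rho$ and uses the triangle-type identity $Z_2(\nu,\rho) = Z_2(\nu,\breve\rho) + Z_2(\rho,\breve\rho)$ together with $m_2(\breve\rho) \geq \mathcal{C}(\mu,\nu)$, which are precisely your two bounds $\mathcal{C}(\nu,\rho) \geq m_2(\rho)$ and $\mathcal{C}(\nu,\rho) \geq \mathcal{C}(\mu,\nu)$ rewritten via $m_2(\breve\rho) = \mathcal{C}(\nu,\rho)$. Your presentation is slightly more economical in that it bypasses the explicit auxiliary measure and works directly with $\mathcal{C}$; one small point to make explicit is that the ``only if'' direction in your second equality case (i.e.\ $\mathcal{C}(\nu,\rho) = m_2(\rho) \Rightarrow \rho \succeq_c \nu$) requires the strict convexity of $\abs{\argu}^2$ via \eqref{eq:strict}, exactly as in the paper's Proposition~\ref{prop:cvx_order_char}(iii).
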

\noindent We see that the Zolotarev projection, either of $\nu$ onto the cone of dominants of $\mu$ or \textit{vice versa}, equals the common dominant of $\mu, \nu$ with the least second moment. Notice the emerging symmetry: while enforcing e.g. only that $\rho \succeq_c \mu$, we obtain the second dominance $\rho \succeq_c \nu$ as a result of optimization. As a by-product, we get the formulas for the normalized distance to the projections,
\begin{align}
	\label{eq:proj_alpha}
	\frac{\min_{\rho \succeq_c \mu} Z_2(\nu,\rho) }{	Z_2(\mu,\nu) }  = \frac{1- 		\alpha_{\succeq_c}(\nu\,|\,\mu)  }{2}, \qquad 
	\frac{\min_{\rho \succeq_c \nu} Z_2(\mu,\rho) }{	Z_2(\mu,\nu) }  = \frac{1+		\alpha_{\succeq_c}(\nu\,|\,\mu)  }{2},
\end{align}
where for distinct measures $\mu \neq \nu$ we have introduced the \textit{convex-order index},
\begin{align*}
		\alpha_{\succeq_c}(\nu\,|\,\mu) :=  \frac{m_2(\nu)-m_2(\mu)}{2 Z_2(\mu,\nu)} \ \in \ [-1,1].
\end{align*}
From the two foregoing equalities, it is clear that $	\alpha_{\succeq_c}(\nu\,|\,\mu) = 1$ when $\mu \preceq_c \nu$, and  $	\alpha_{\succeq_c}(\nu\,|\,\mu) = -1$ when $\mu \succeq_c \nu$. The intermediate index values convey an information on how much the convex order, in one direction or the other, is violated in the sense of the $Z_2$ metric. Observe that this information is essentially carried through the distance $Z_2(\mu,\nu)$ alone.


\begin{remark}
		It must be emphasised that  Proposition \ref{prop:Zol_proj} asserts an equality between three sets which are not singletons in general, cf. Example \ref{ex:non-uniqueness}. Therefore, the term 'Zolotarev projection' is a slight abuse of language. Note that a similar terminology is used in \cite{alfonsi2020sampling,kim2024} for the forward Wasserstein projection, which in general is  not uniquely defined either.
\end{remark}

\subsection{A robust approximation of the classical martingale transport }
\label{ssec:MOT_app}

As observed above, $(\mathrm{MOT})$ can be reset through $(\mathrm{M^2OT})$ with the cost (A). However, it is a theoretical observation that is of no direct aid when tackling $(\mathrm{MOT})$, for instance numerically. For one of  the main contributions of this work we shall propose a bi-martingale approximation of $(\mathrm{MOT})$.

In order that such an approximation facilitates a robust computational method, it must be stable with respect to the two marginals. Assume that we start with marginals in convex order, $\mu \preceq_c \nu$, which are then approximated by convergent sequences $\mu_n \rightharpoonup\mu, \nu_n \rightharpoonup \nu$, e.g. as a result of sampling or deterministic discretization. Two scenarios may occur:
\begin{enumerate}[label=\arabic*)]
	\item  either $\mu_n \preceq_c \nu_n$ for every $n$ (e.g. thanks to the Wasserstein or Zolotarev projection), 
	\item   or the convex order is not preserved at the approximative level. 
\end{enumerate}
In the first case, assume that $\gamma_n \in \Gamma(\mu_n,\nu_n)$ is the exact solution of the well-posed $(\mathrm{MOT})$. Under relevant continuity assumptions on the cost $c$, one expects that every accumulation point $\gamma$ of the sequence $\gamma_n$ is optimal for $(\mathrm{MOT})$,  and that the total transport cost converges as well. Results of this kind are well established for the classical optimal transport $(\mathrm{OT})$, but for its martingale variant stability was proven in 1D only  \cite{juillet2016stability,backhoff2022stability,wiesel2023continuity}. Meanwhile, stability has been refuted for multiple dimensions in \cite{bruckerhoff2022instability} with a simple  counter-example.

It is thus clear that even in the scenario 1)  an approximate $(\mathrm{MOT})$ formulation is the only possible path to robustness, whist scenario 2) leaves no choice. Either way, the martingale condition must be  induced approximatively. The paper \cite{guo2019} addresses this challenge by  proposing the integral  constraint $\int \big|[\gamma_x] -x\big| \mu(dx) \leq \eps$. Through a careful control of $\eps=\eps_n$ the authors have established the stability result under the assumption that $c$ is Lipschitz continuous.

Our bi-martingale approximation of $(\mathrm{MOT})$ will serve as an alternative for the proposal of \cite{guo2019}. It will involve a penalty term: for $\eps >0$ the three-point cost function will read,
\begin{equation*}
	\cost(x,y,z) = c(x,y) + \frac{1}{2\eps} \abs{z}^2.
\end{equation*}
Under suitable conditions, the martingale optimal transport problem $(\mathrm{MOT})$ will be identified as the $\Gamma$-limit of the following sequence of bi-martingale problems,
\begin{align*}
	\inf\left\{ \iint_{(\Rd)^2} \bigg( c (x,y) + \frac{1}{2\eps_n} \Big(\Big|\frac{dq}{d\gamma}(x,y)\Big|^2 - \mathcal{C}(\mu_n,\nu_n) \Big) \bigg) \gamma(dxdy) \, : \, 		(\gamma,q) \in \Gamma Q(\mu_n,\nu_n)    \right\}  \\ (\mathrm{M^2 OT}_n)
\end{align*}
The constant $\mathcal{C}(\mu_n,\nu_n)$, see \eqref{eq:minDom}, affects the functional  through a vertical shift. Hence, it is not required for solving the problem -- it is merely added for the purpose of $\Gamma$-convergence analysis.
\begin{theorem}
	\label{thm:approx_MOT_intro}
	With a generic but fixed barycentre $\ba \in \Rd$, assume:
	\begin{itemize}
		\item a continuous cost ${c} \in C(\Rd \times \Rd)$ that satisfies the following growth condition: for  $p \in [0,2)$ and $D_1,D_2>0$,
			\begin{equation}
			\label{eq:quad_bound_cost}
			-D_1 \leq c(x,y) \leq D_2 (1 + \abs{x}^p + \abs{y}^p);
		\end{equation}
		\item probabilities $\mu,\nu \in \mathcal{P}^\ba_2(\Rd)$ in convex order $\mu \preceq_c \nu$;
		\item sequences $\{\mu_{n}\}, \{\nu_{n}\} \subset \mathcal{P}^\ba_2(\Rd)$ (not necessarily in convex order) that converge to, respectively, $\mu, \nu$ in the Zolotarev-2 metric  (equivalently, in the Wasserstein-2 metric);
		\item a sequence $\eps_n >0$ that converges to zero slowly enough so that,
		\begin{equation*}
			\lim_{n \to +\infty} \frac{Z_2(\mu,\mu_{n})  + Z_2(\nu,\nu_{n})}{\eps_n}=0.
		\end{equation*}
	\end{itemize}
	Then, the problems $(\mathrm{M^2 OT}_n)$ are well-posed and they $\Gamma$-converge\footnote{More accurately, the $\Gamma$-limit is the bi-martingale problem $(\mathrm{M^2 OT})$ with the cost (A), which is equivalent to $(\mathrm{MOT})$.} to $(\mathrm{MOT})$. Accordingly,  for any $(\gamma_n,q_n)$ being a sequence of minimizers for  $(\mathrm{M^2 OT}_n)$,  up to selecting a subsequence there holds,
	\begin{equation*}
		\gamma_n \rightharpoonup \gamma \in \Gamma_{\mathrm{M}}(\mu,\nu),
	\end{equation*}
	where $\gamma$  solves $(\mathrm{MOT})$. Moreover, 
	\begin{equation*}
		\lim_{n \to +\infty} \min\, (\mathrm{M^2 OT}_n) = \min\, (\mathrm{MOT}).
	\end{equation*}
\end{theorem}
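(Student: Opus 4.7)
The plan is to establish $\Gamma$-convergence of $(\mathrm{M^2 OT}_n)$ to the bi-martingale form of $(\mathrm{MOT})$ and then read off the convergence of minima and the subsequential accumulation of minimizers. Well-posedness of each $(\mathrm{M^2 OT}_n)$ follows from the direct method applied to its convex formulation: the quadratic penalty enforces $L^2$-coercivity of the coupling $\zeta_n = dq_n/d\gamma_n$, and $(\gamma,q) \mapsto \iint |dq/d\gamma|^2\,d\gamma$ is convex and weakly$\ast$ lower semicontinuous in the Benamou--Brenier sense. Plugging the $\mathcal{C}$-optimizer $(\ov\gamma_n, \ov\zeta_n\ov\gamma_n)$ from Theorem~\ref{thm:RK_reinvented} into the functional produces a uniform upper bound $\min(\mathrm{M^2 OT}_n) \leq \iint c\,d\ov\gamma_n \leq M$, finite by \eqref{eq:quad_bound_cost} with $p<2$ and the uniform boundedness of $m_p(\mu_n)+m_p(\nu_n)$ inherited from $Z_2$-convergence.

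For the $\Gamma$-liminf, take any $(\gamma_n, q_n) \in \Gamma Q(\mu_n,\nu_n)$ of bounded cost. Combined with $c \geq -D_1$ this forces $\iint|\zeta_n|^2\,d\gamma_n \leq \mathcal{C}(\mu_n,\nu_n) + O(\eps_n)$, and \eqref{eq:Z2_C} together with continuity of $Z_2$ and the identity $\mathcal{C}(\mu,\nu) = m_2(\nu)$ (valid because $\mu \preceq_c \nu$ makes $\nu$ itself the optimal common dominant, by Proposition~\ref{prop:Zol_proj}) yields $\mathcal{C}(\mu_n,\nu_n) \to m_2(\nu)$. Weak$\ast$ compactness furnishes, along a subsequence, $\gamma_n \rightharpoonup \gamma \in \Gamma(\mu,\nu)$ and $q_n \weakstar q \ll \gamma$ with $\zeta := dq/d\gamma \in L^2_\gamma$ and the two linear bi-martingale constraints preserved in the limit. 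Lower semicontinuity of the quadratic energy gives $\iint|\zeta|^2\,d\gamma \leq m_2(\nu)$; together with $\rho := \zeta\#\gamma \succeq_c \nu$ (so $m_2(\rho) \geq m_2(\nu)$), this forces equality, and the martingale variance decomposition applied to $(\pi_2,\zeta)\#\gamma \in \Gamma_{\mathrm{M}}(\nu,\rho)$ yields $\rho = \nu$ and $\zeta = \pi_2$ $\gamma$-a.e., whence $\gamma \in \Gamma_{\mathrm{M}}(\mu,\nu)$. The cost integral $\iint c\,d\gamma_n \to \iint c\,d\gamma$ follows from continuity of $c$, the growth bound with $p<2$, and uniform integrability coming from convergence of second moments; since the penalty term contributes a non-negative liminf, the total liminf is at least the $(\mathrm{MOT})$-cost of $\gamma$.

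For the $\Gamma$-limsup, given $\gamma \in \Gamma_{\mathrm{M}}(\mu,\nu)$ I would construct the recovery sequence via the Zolotarev projection $\hat\rho_n := \Pi_{\succeq_c \mu_n}(\nu_n)$, which by Proposition~\ref{prop:Zol_proj} is an optimal common dominant with $m_2(\hat\rho_n) = \mathcal{C}(\mu_n,\nu_n)$ and, by continuity of the projection, satisfies $Z_2(\hat\rho_n,\nu) \to 0$. Gluing martingale couplings $\eta_n \in \Gamma_{\mathrm{M}}(\mu_n,\hat\rho_n)$ and $\lambda_n \in \Gamma_{\mathrm{M}}(\nu_n,\hat\rho_n)$ conditionally on $\hat\rho_n$ yields a 3-plan $\varpi_n$; I then set $\gamma_n := (\pi_1,\pi_2)\#\varpi_n$ and $\zeta_n(x,y) := \int z\,\varpi_{n,(x,y)}(dz)$, so that $(\gamma_n,\zeta_n\gamma_n) \in \Gamma Q(\mu_n,\nu_n)$. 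The couplings $\eta_n,\lambda_n$ are engineered by transporting the trivial 3-plan $(\mathrm{id},\mathrm{id},\pi_2)\#\gamma$ along $W_2$-optimal maps between $\mu,\nu$ and their approximants, forcing $\gamma_n \rightharpoonup \gamma$; combining Jensen's inequality for the conditional expectation with the $Z_2$-distance from $\hat\rho_n$ to the pushed-forward dominant bounds the penalty excess by $O(Z_2(\mu,\mu_n)+Z_2(\nu,\nu_n)) = o(\eps_n)$. This alignment is the main obstacle: martingale stability is known to fail in dimensions $d \geq 2$ \cite{bruckerhoff2022instability}, so direct approximation of $\gamma$ by honest martingale plans between $\mu_n,\nu_n$ is impossible, and the bi-martingale framework---the flexibility to choose $\zeta_n$ away from $\pi_2$ together with the slowly vanishing penalty---is precisely what restores robustness.

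Combining the two $\Gamma$-inequalities with the equi-coercivity drawn from the uniform $L^2$-bound on $\zeta_n$ promotes the $\Gamma$-convergence to both $\min(\mathrm{M^2 OT}_n) \to \min(\mathrm{MOT})$ and subsequential weak convergence of any sequence of minimizers $\gamma_n$ to an $(\mathrm{MOT})$-optimal $\gamma \in \Gamma_{\mathrm{M}}(\mu,\nu)$.
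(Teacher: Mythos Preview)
Your $\Gamma$-liminf argument is essentially correct and close to the paper's, though you specialize to the convex-order case earlier than necessary: the paper proves that any weak limit $(\gamma,q)$ of a bounded-cost sequence lands in $\ov{\Gamma Q}(\mu,\nu)$ without assuming $\mu\preceq_c\nu$, and only afterwards invokes the identification $\ov{\Gamma Q}(\mu,\nu)=\{(\gamma,y\gamma):\gamma\in\Gamma_{\mathrm{M}}(\mu,\nu)\}$ from Proposition~\ref{prop:cvx_order_char}. Your shortcut via $\mathcal{C}(\mu,\nu)=m_2(\nu)$ and the variance decomposition is legitimate under the stated hypotheses.

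The $\Gamma$-limsup construction, however, has a genuine gap. Gluing martingale couplings $\eta_n\in\Gamma_{\mathrm{M}}(\mu_n,\hat\rho_n)$ and $\lambda_n\in\Gamma_{\mathrm{M}}(\nu_n,\hat\rho_n)$ conditionally on $\hat\rho_n$ does produce an admissible $(\gamma_n,\zeta_n\gamma_n)\in\Gamma Q(\mu_n,\nu_n)$ with penalty $\leq 0$ by Jensen, but you give no mechanism for selecting $\eta_n,\lambda_n$ so that $\gamma_n=\pi_{1,2}\#\varpi_n$ converges to the \emph{prescribed} $\gamma$. Your phrase ``engineered by transporting the trivial 3-plan $(\mathrm{id},\mathrm{id},\pi_2)\#\gamma$ along $W_2$-optimal maps'' does not work: pushing forward by $W_2$-optimal maps $T_n:\mu\to\mu_n$, $S_n:\nu\to\hat\rho_n$ destroys the martingale property, so the result is not in $\Gamma_{\mathrm{M}}(\mu_n,\hat\rho_n)$, and if you instead pick $\eta_n,\lambda_n$ abstractly by Strassen you lose the link to $\gamma$. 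This is precisely the multidimensional instability you yourself cite; the difficulty is not bypassed, only relocated.

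The paper's recovery sequence is built differently and avoids this trap. It does \emph{not} glue through a common dominant of $\mu_n,\nu_n$. Instead it takes optimal bi-martingale pairs $(\hat\gamma_n,\hat\zeta_n\hat\gamma_n)\in\ov{\Gamma Q}(\mu,\mu_n)$ and $(\check\gamma_n,\check\zeta_n\check\gamma_n)\in\ov{\Gamma Q}(\nu,\nu_n)$, and forms a four-fold gluing $\lambda_n(dxd\xi dyd\eta)=\gamma(dxdy)\otimes\hat\gamma^n_x(d\xi)\otimes\check\gamma^n_y(d\eta)$ that keeps the original $\gamma$ as its $(1,3)$-marginal. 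The recovery plan is $\gamma_n=\pi_{2,4}\#\lambda_n$, and the coupling map is the conditional expectation of $z_n(x,\xi,y,\eta)=\zeta(x,y)+(\hat\zeta_n(x,\xi)-x)+(\check\zeta_n(y,\eta)-y)$. The correction terms $\hat\zeta_n-x$ and $\check\zeta_n-y$ are exactly what restores the $Q(\mu_n,\nu_n)$ constraint without requiring any intermediate martingale structure, and their $L^2$-norms are $Z_2(\mu,\mu_n)$ and $Z_2(\nu,\nu_n)$ by optimality. Expanding $|z_n|^2$ and exploiting the bi-martingale identities kills all cross terms, leaving a penalty excess of exactly $\frac{2}{\eps_n}\big(Z_2(\mu,\mu_n)+Z_2(\nu,\nu_n)\big)\to 0$. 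The convergence $\gamma_n\rightharpoonup\gamma$ follows from the disintegration structure because $\hat\gamma^n_x\otimes\check\gamma^n_y\rightharpoonup\delta_{(x,y)}$ $\gamma$-a.e.\ along a subsequence.
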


\begin{remark}
	Comparison to the similar result \cite[Theorem 2.2]{guo2019} is in order:
	\begin{itemize}
		\item[(a)] The contribution \cite{guo2019} assumes Lipschitz continuity of the cost function $c$, whilst here only the growth conditions are stipulated apart from  continuity.
		\item[(b)] Our result has an extension to the limit data $\mu,\nu$ which are not in convex order, see below.
		\item[(c)]  The paper \cite{guo2019} covers the case of multiple marginals, whilst our strategy is currently confined to one-step martingale couplings only.
		\item[(d)] Our formulation requires that the approximations $\mu_n, \nu_n$ have the same barycentre $\ba =[\mu]=[\nu]$, which may be not automatically met in practice.  However, this can be easily remedied in post-processing through translation of the measures, see also \cite{alfonsi2020sampling} where this step was added to enhance computational stability.
	\end{itemize}
\end{remark}

	It is notable that the problems $(\mathrm{M^2 OT}_n)$ are symmetric with respect to the marginals $\mu,\nu$ (up to a potential dissymmetry of the cost $c$), clearly not being the case for $(\mathrm{MOT})$. In fact, the $\Gamma$-convergence  result put forth in Section \ref{sec:MOT} does not require that the marginals $\mu,\nu$ are in convex order. Assuming just a match of the barycentres $[\mu] = [\nu] = \ba$, the more general version of Theorem \ref{thm:approx_MOT_intro} follows.
	
	\begin{proposition}
		Assume the prerequisites of Theorem \ref{thm:approx_MOT_intro}, except that $\mu,\nu$ may be any probability measures in $\mathcal{P}_2^\ba(\Rd)$ (sharing the barycentre, but not necessarily in convex order). Then, up to selecting a subsequence, $\gamma_n \rightharpoonup \gamma$, where $\gamma$ is a solution of the following problem,
		\begin{align}
			\label{eq:Gamma_limit}
			\min\left\{ \int_X \! \int_Y c (x,y)\, \gamma(dxdy) \, : \, 	\gamma \in \ov\Gamma(\mu,\nu)   \right\}.
		\end{align}
	\end{proposition}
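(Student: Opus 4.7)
The plan is to set up $\Gamma$-convergence (plus equicoercivity) for the penalized functionals
$$F_n(\gamma,q) \ = \ \int c\,d\gamma \ + \ \frac{1}{2\eps_n}\left(\int\left|\tfrac{dq}{d\gamma}\right|^2 d\gamma \ - \ \mathcal{C}(\mu_n,\nu_n)\right)$$
on $\Gamma Q(\mu_n,\nu_n)$, targeting the limit functional $\int c\,d\gamma$ restricted to $\ov\Gamma(\mu,\nu)$. The stated conclusion on minimizers is then the standard consequence of $\Gamma$-convergence.

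I would start with compactness. Testing $F_n(\gamma_n^*,q_n^*)\leq F_n(\hat\gamma_n,\hat q_n)$ against an admissible reference pair realizing the minimum in the second identity of Theorem~\ref{thm:RK_reinvented} -- for which the penalty vanishes identically -- and using the lower bound $c\geq -D_1$ together with the sub-quadratic growth \eqref{eq:quad_bound_cost} and the uniform second-moment control of $\mu_n,\nu_n$, one obtains the key estimate
$$\int\left|\tfrac{dq_n^*}{d\gamma_n^*}\right|^2 d\gamma_n^* \ \leq \ \mathcal{C}(\mu_n,\nu_n)+C\eps_n.$$
Together with tightness in $\mathcal{P}_2$ coming from $W_2$-convergence $\mu_n\to\mu$, $\nu_n\to\nu$, this delivers a weakly convergent subsequence $\gamma_n^*\rightharpoonup\gamma$; by Cauchy--Schwarz $q_n^*$ is bounded in total variation so that $q_n^*\weakstar q$ along a further subsequence. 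The marginal constraints pass to the limit and place $(\gamma,q)\in\Gamma(\mu,\nu)\times Q(\mu,\nu)$.

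Next I would identify the limit. Continuity of $\mathcal{C}$ along $Z_2$-convergent sequences, via \eqref{eq:Z2_C} and the two-sided bounds \eqref{eq:rio}, yields $\mathcal{C}(\mu_n,\nu_n)\to\mathcal{C}(\mu,\nu)$. Joint weak lower semicontinuity of the convex integral functional $(\gamma,q)\mapsto\int|dq/d\gamma|^2\,d\gamma$ (classical for integral functionals of measure pairs with convex $1$-homogeneous perspective) then forces $\int|dq/d\gamma|^2\,d\gamma\leq\mathcal{C}(\mu,\nu)$; since the reverse inequality is intrinsic to the definition of $\mathcal{C}$, equality holds and $(\gamma,q)$ is a minimizer in the second identity of Theorem~\ref{thm:RK_reinvented}. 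The equivalence (i)$\Leftrightarrow$(ii) of that theorem then gives $q=\ov\zeta\,\gamma$ and $\gamma\in\ov\Gamma(\mu,\nu)$, so $\gamma$ is admissible in \eqref{eq:Gamma_limit}. Portmanteau applied to $c+D_1\geq 0$ also yields $\int c\,d\gamma\leq \liminf \int c\,d\gamma_n^*$.

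The remaining piece is optimality of $\gamma$, which requires a matching upper bound: for any $\hat\gamma\in\argmin$ of \eqref{eq:Gamma_limit}, a recovery sequence $(\hat\gamma_n,\hat q_n)\in\Gamma Q(\mu_n,\nu_n)$ with $\hat\gamma_n\rightharpoonup\hat\gamma$ and $F_n(\hat\gamma_n,\hat q_n)\to\int c\,d\hat\gamma$. Taking $\hat\gamma_n\in\ov\Gamma(\mu_n,\nu_n)$ and $\hat q_n=\ov\zeta_n\hat\gamma_n$ annihilates the penalty, and sub-quadratic growth of $c$ then converts $\hat\gamma_n\rightharpoonup\hat\gamma$ into convergence of the transport cost. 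This pinpoints the hardest step as a \emph{lower stability} statement for $\ov\Gamma$ under $W_2$-convergence of marginals: exhibiting $\hat\gamma_n\in\ov\Gamma(\mu_n,\nu_n)$ converging weakly to a prescribed $\hat\gamma\in\ov\Gamma(\mu,\nu)$. This is precisely the danger zone flagged by the paper in connection with $(\mathrm{MOT})$-instability in higher dimensions. The variational characterization of $\ov\Gamma$ via Theorem~\ref{thm:RK_reinvented} opens a softer path: begin from a gluing $(T_n^\mu,T_n^\nu)\#\hat\gamma$ built from $W_2$-optimal plans $\mu\to\mu_n$ and $\nu\to\nu_n$, which already converges weakly to $\hat\gamma$ but violates the bi-martingale condition by $O(W_2(\mu,\mu_n)+W_2(\nu,\nu_n))$, then absorb the violation into the penalty using precisely the rate hypothesis $(Z_2(\mu,\mu_n)+Z_2(\nu,\nu_n))/\eps_n\to 0$, so that the penalty contribution becomes $\frac{1}{2\eps_n}\cdot o(\eps_n)\to 0$. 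This is exactly the step where the carefully tuned scaling of $\eps_n$ is indispensable.
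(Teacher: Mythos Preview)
Your overall architecture is correct and matches the paper: $\Gamma$-convergence of $F_n$ to $F = \int c\,d\gamma + \chi_{\ov{\Gamma Q}(\mu,\nu)}$, with compactness and the $\liminf$ step handled essentially as you describe (and as the paper does in Section~\ref{ssec:liminf}).

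The gap is in the $\limsup$ step. Your ``softer path'' proposes a glued plan that ``violates the bi-martingale condition by $O(W_2)$'' and then to ``absorb the violation into the penalty''. But membership $(\gamma,q)\in\Gamma Q(\mu_n,\nu_n)$ is a \emph{hard} constraint in $F_n$ (via the indicator $\chi_{\Gamma Q(\mu_n,\nu_n)}$), not a soft one; the penalty $\tfrac{1}{2\eps_n}\big(\int|dq/d\gamma|^2 d\gamma-\mathcal C(\mu_n,\nu_n)\big)$ only measures the excess over $\mathcal C$ \emph{within} $\Gamma Q$. A recovery pair must therefore satisfy $\pi_1\#q_n=\xi\mu_n$, $\pi_2\#q_n=\eta\nu_n$ exactly; you cannot trade a marginal defect for penalty. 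Moreover, even if you did land in $\Gamma Q(\mu_n,\nu_n)$, a construction based on $W_2$-optimal couplings would most naturally yield a penalty of order $W_2/\eps_n$, whereas the rate hypothesis only controls $Z_2/\eps_n$; since \eqref{eq:rio} gives $Z_2\lesssim W_2$ and not the reverse, this does not close.

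The paper's recovery construction (Section~\ref{ssec:limsup}) resolves both issues by gluing not with $W_2$-optimal couplings but with \emph{optimal bi-martingale} pairs $(\hat\gamma_n,\hat\zeta_n\hat\gamma_n)\in\ov{\Gamma Q}(\mu,\mu_n)$ and $(\check\gamma_n,\check\zeta_n\check\gamma_n)\in\ov{\Gamma Q}(\nu,\nu_n)$. One glues $\gamma$ with $\hat\gamma_n,\check\gamma_n$ into a four-fold measure $\lambda_n$, sets $\gamma_n=\pi_{2,4}\#\lambda_n$, and defines $q_n=\zeta_n\gamma_n$ with $\zeta_n$ the $\gamma_n$-conditional expectation of
\[
z_n(x,\xi,y,\eta)=\zeta(x,y)+(\hat\zeta_n(x,\xi)-x)+(\check\zeta_n(y,\eta)-y).
\]
The telescoping structure of $z_n$ makes $(\gamma_n,q_n)\in\Gamma Q(\mu_n,\nu_n)$ \emph{exactly}. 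Expanding $\int|z_n|^2 d\lambda_n$ and using the identities \eqref{eq:Z2_ovzeta}, \eqref{eq:Cmunu_ovzeta} for the three optimal pairs, together with the triangle inequality $Z_2(\mu_n,\nu_n)\geq Z_2(\mu,\nu)-Z_2(\mu,\mu_n)-Z_2(\nu,\nu_n)$ inside \eqref{eq:CZ}, bounds the penalty by $\tfrac{2}{\eps_n}\big(Z_2(\mu,\mu_n)+Z_2(\nu,\nu_n)\big)\to 0$. This is precisely where the $Z_2$-rate hypothesis is consumed, and why bi-martingale (rather than $W_2$) couplings are the right gluing objects.
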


\textit{A priori}, this problem is symmetric with respect to $\mu,\nu$ as well. Only when $\mu \preceq_c \nu$, the equivalence \eqref{eq:Gammabar_cvxorder} revives the classical martingale condition $\gamma \in \Gamma_{\mathrm{M}}(\mu,\nu)$, leading to the statement of Theorem \ref{thm:approx_MOT_intro}.  The aforementioned symmetry manifests itself when the opposite order $\mu \succeq_c \nu$ holds true: the admissible set in \eqref{eq:Gamma_limit}  then encodes the reverse martingale condition, $\ov\Gamma(\mu,\nu) = (\pi_2,\pi_1) \# \Gamma_{\mathrm{M}}(\nu,\mu)$.

In case when the data are not ordered, the optimal transport formulation \eqref{eq:Gamma_limit} calls for an interpretation. One expects that, if $\nu$ is 'close' to dominating $\mu$ for  convex order, then the plans $\gamma$ in the set $\ov\Gamma(\mu,\nu)$ should be 'close' to satisfying the classical martingale condition. There are several possible ways of phrasing this rigorously. The one we choose below neatly resounds the strategy of \cite{alfonsi2020sampling}, with the Wasserstein projection replaced by the Zolotarev projection.

\begin{proposition}
	\label{prop:discrepancy}
	For every plan $\gamma \in \ov\Gamma(\mu,\nu)$ there exists a Zolotarev projection $\ov\rho \in \Pi_{\succeq_c \mu}(\nu)$ and a martingale plan $\breve\gamma  \in \Gamma_{\mathrm{M}}(\mu,\ov\rho)$ such that,
	\begin{equation*}
		\int_X W_2^2(\gamma_x,\breve\gamma_{x}) \,\mu(dx)  =  \big(1- \alpha_{\succeq_c}(\nu\,|\,\mu) \big) Z_2(\mu,\nu),
	\end{equation*}
	where $\gamma_x,\breve\gamma_{x}$ are the relevant transport kernels with respect to the marginal $\mu$. Moreover, the Zolotarev projection and the martingale plan can be computed as follows: $\breve\gamma = (\pi_1,\ov\zeta) \# \gamma$ and $\ov{\rho} = \ov\zeta \# \gamma$, where $\ov\zeta$ is expressed by \eqref{eq:ovzeta} for any optimal potential $\ov{u}$.
\end{proposition}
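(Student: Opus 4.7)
The plan is to check the stated properties in turn, using the explicit assignments $\breve\gamma := (\pi_1, \ov\zeta) \# \gamma$ and $\ov\rho := \ov\zeta \# \gamma$ throughout. First, the membership $\breve\gamma \in \Gamma_{\mathrm{M}}(\mu, \ov\rho)$ is automatic: the first marginal is $\mu$ by the definition of the push-forward, the second is $\ov\zeta \# \gamma = \ov\rho$, and the martingale identity $\int z \, \breve\gamma_x(dz) = \int \ov\zeta(x,y) \, \gamma_x(dy) = x$ is exactly the first bi-martingale condition in \eqref{eq:2xmartingale_condition}. Next, since $\gamma \in \ov\Gamma(\mu,\nu)$ is equivalent, via (i) $\Leftrightarrow$ (ii) of Theorem \ref{thm:RK_reinvented}, to $(\gamma, \ov\zeta\gamma)$ solving the quadratic bi-martingale problem, condition (iv) of that same theorem delivers $\ov\rho \in \Pi_{\succeq_c \mu}(\nu)$.

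The central step is the computation of $\int_X W_2^2(\gamma_x, \breve\gamma_x)\, \mu(dx)$. Because $\breve\gamma_x = \ov\zeta(x,\cdot) \# \gamma_x$ by construction, the deterministic coupling $(y, \ov\zeta(x,y)) \# \gamma_x$ provides an upper bound of $\int |y - \ov\zeta(x,y)|^2 \gamma_x(dy)$ on the Wasserstein cost. The key observation is that this bound is attained, because for each fixed $x$ the map $y \mapsto \ov\zeta(x, y)$ is the gradient of a convex function,
\begin{equation*}
	\ov\zeta(x, y) = \nabla_y \phi_x(y), \qquad \phi_x(y) := \tfrac{1}{4}\bigl(|y|^2 - 2\, \ov u(y)\bigr) + \tfrac{1}{2}\, y \cdot \bigl(x + \nabla \ov u(x)\bigr),
\end{equation*}
and the $1$-Lipschitz bound on $\nabla \ov u$ from \eqref{eq:Z2} renders $\ov u(y) - |y|^2/2$ concave, hence $\phi_x$ convex in $y$. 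The classical sufficiency direction of Brenier's theorem --- which follows from cyclical monotonicity of the graph of a subdifferential and requires no absolute-continuity hypothesis on the source measure --- then yields $W_2^2(\gamma_x, \breve\gamma_x) = \int |y - \ov\zeta(x,y)|^2 \gamma_x(dy)$, and integrating against $\mu$ converts the left-hand side of the desired identity into $\int |y - \ov\zeta|^2 \, \gamma(dxdy)$.

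To close, I would exploit the symmetric bi-martingale identity: by the second line of \eqref{eq:2xmartingale_condition}, $(\pi_2, \ov\zeta)\# \gamma \in \Gamma_{\mathrm{M}}(\nu, \ov\rho)$, so the usual second-moment identity for martingale plans gives $\int |y - \ov\zeta|^2 \, d\gamma = m_2(\ov\rho) - m_2(\nu)$. Condition (iii) of Theorem \ref{thm:RK_reinvented} together with \eqref{eq:Z2_C} provides $m_2(\ov\rho) = \mathcal{C}(\mu,\nu) = Z_2(\mu,\nu) + (m_2(\mu) + m_2(\nu))/2$, from which the target value $\bigl(1 - \alpha_{\succeq_c}(\nu\,|\,\mu)\bigr)\, Z_2(\mu,\nu)$ follows by direct algebra with the definition of $\alpha_{\succeq_c}$.

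I expect the main obstacle to be the identification of the natural coupling as $W_2$-optimal, since it requires both the convexity of $\phi_x$ (a neat but non-trivial consequence of the semi-concavity of $\ov u$ imposed by the Zolotarev constraint) and an appeal to the regularity-free sufficiency direction of Brenier's theorem, so as to cover measures $\gamma_x$ that may be singular. Once this step is in place, the remaining deductions are routine tracking of the two bi-martingale conditions through the quadratic-moment identities.
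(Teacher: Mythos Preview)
Your proposal is correct and follows essentially the same route as the paper's proof (given there as Proposition~\ref{prop:dispcrepancies}): identify $\ov\zeta(x,\cdot)$ as the gradient of a convex function via the semi-concavity of $\ov u$, invoke Brenier's theorem to conclude that the induced coupling is $W_2$-optimal, and reduce the resulting integral $\iint |\ov\zeta - y|^2\, d\gamma$ to $m_2(\ov\rho) - m_2(\nu)$ before converting to the form involving $\alpha_{\succeq_c}$. The only step you pass over quickly is the identification $\breve\gamma_x = \ov\zeta(x,\cdot)\#\gamma_x$, which the paper verifies by a short disintegration argument, but this is standard.
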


%

\subsection{Other results and organization of the paper}

After listing the notation in Section \ref{sec:notation}, we shall devote Section \ref{sec:M2OT} to the well-posedness of the convex minimization problem $(\mathrm{M^2OT})$ for a fairly wide class of costs $\cost(x,y,z)$. In the same section we will shortly present its reformulation to the language of 3-plans $\varpi \in \mathcal{P}(X\times Y\times \Rd)$, employed in \cite{bolbotowski2024kantorovich} for the quadratic cost.

The equivalence (i) $\Leftrightarrow$ (iii) in Theorem \ref{thm:RK_reinvented} is not exclusive to the quadratic case. It can be extended to optimal convex dominance problems for any convex superlinear cost $f:\Rd \to \R \cup \{+\infty\}$,
\begin{equation*}
	\label{eq:conv_dom_intro}
	\inf\left\{ \int_\Rd f(z) \,\rho(dz) \, : \, \rho \in \mathcal{P}_1(\Rd), \ \ \rho \succeq_c \mu, \ \ \rho \succeq_c \nu \right\}.
\end{equation*}
Section \ref{sec:conv_dom} will be devoted entirely to this problem.
Proposition \ref{prop:rhozetagamma} will state that $(\gamma,q)=(\gamma,\zeta\gamma) \in \Gamma Q(\mu,\nu)$ solves $(\mathrm{M^2OT})$ for the convex cost $c(x,y,z) = f(z)$ if and only if $\rho = \zeta \# \gamma$ is a minimizer above. This result will be preceded by the short Subsection \ref{ssec:1D} on the optimal convex dominance on the real line, where an explicit solution $\rho = \mu \vee \nu$ that is universal for all costs $f$ can be constructed.

Section \ref{sec:Zol} will focus on the quadratic case of $(\mathrm{M^2OT})$, that is the case of the equivalent costs $\cost(x,y,z) = \abs{z}^2$ and $\cost(x,y,z) = \frac{1}{2}\big(\abs{z-x}^2 + \abs{z-y}^2\big)$. In particular, it will be devoted to proving Theorem \ref{thm:RK_reinvented}, which is split throughout the section into several results: Corollary \ref{cor:RK_duality}, Proposition \ref{prop:ovGammaQ_char}, Theorem \ref{thm:Zol_proj_again}, but also the already mentioned  Proposition \ref{prop:rhozetagamma} specified for $f = \abs{\argu}^2$.


Section \ref{sec:MOT} will contain the proofs of the $\Gamma$-convergence results for the bi-martingale approximation $(\mathrm{M^2OT}_n)$ of the $(\mathrm{MOT})$. 
We will conclude the paper with a number of examples in Section \ref{sec:examples}. In particular, Example \ref{ex:stability} demonstrates the stabilization effect of our  approximation $(\mathrm{M^2OT}_n)$ for the sequence of data $\mu_n,\nu_n$ which in  \cite{bruckerhoff2022instability} has showcased instability of the exact $(\mathrm{MOT})$ formulation.


\subsection{Additional remarks on the similarities with the weak optimal transport}

A non-linear variant of optimal transport different than $(\mathrm{M^2OT})$ has attracted considerable attention in the recent years.
Consider a cost function $\breve{c} :X \times \mathcal{P}_1(\Rd) \to \R \cup \{+\infty\}$ that is convex in the second argument. A formulation that today is known under the name of \textit{weak optimal transport} has been proposed in \cite{gozlan2017} and later in \cite{alibert2019},
\begin{align*}
	\qquad \qquad \qquad \qquad
	\inf\left\{ \int_X  \breve{c} (x,\gamma_x)\, \mu(dx) \, : \, 	\gamma \in \Gamma(\mu,\nu)   \right\} \qquad \qquad(\mathrm{WOT}).
\end{align*}
The primary difference between the formulations $(\mathrm{M^2OT})$ and $(\mathrm{WOT})$ is that the latter is inherently dissymmetric. The similarities are more apparent if the class of barycentric costs is considered in  $(\mathrm{WOT})$, i.e. costs $\breve{c}(x,p)$ that involve $[p]$.
Then, there is a resemblance between the roles played by $[\gamma_x]$  and $\frac{dq}{d\gamma}(x,y)$ in the respective problems. For instance,  if  $\breve{c}(x,p) = \int c(x,y) \,p(dy) +  \chi_{\{x\}} ([p])$, then $(\mathrm{WOT})$ problem becomes the classical martingale optimal transport problem, in a similar fashion to $(\mathrm{M^2OT})$ with the cost (A).
The most meaningful parallels can be discerned for the quadratic barycentric cost, $\breve{c}(x,p) = \abs{p-x}^2$, as we shall now outline.

An intimate connection between the backward Wasserstein projection and the quadratic barycentric variant of $(\mathrm{WOT})$ has been reported  independently in \cite{alfonsi2020sampling} and \cite{gozlan2020},
\begin{equation*}
	\min\limits_{\rho \preceq_c \nu} 	W^2_2(\mu,\rho) = \min_{\gamma \in \Gamma(\mu,\nu)} \int_X \big|[\gamma_x] - x\big|^2 \mu(dx).
\end{equation*}
If $\gamma$ solves the barycentric  $(\mathrm{WOT})$ as above, then the backward Wasserstein projection can be recast as  $\rho = T \# \mu$ for the transport map $T(x) = [\gamma_x]$. Despite the uniqueness of $\rho$, in general there is a large freedom in the choice of the optimal plan $\gamma$, cf. the characterization  \cite[Theorem 1.2]{gozlan2020}. The same result recognizes $T$ as the Brenier map $\nabla \varphi$ for a convex function $\varphi$ that is $C^{1,1}$.

Based on Theorem \ref{thm:RK_reinvented} in this paper, we can deduce links of similar flavour for the forward Zolotarev projection and the quadratic bi-martingale optimal transport,
\begin{equation*}
	\min_{\rho \succeq_c \mu} Z_2(\nu,\rho) = \min_{  	(\gamma,q) \in \Gamma Q(\mu,\nu) } \int_X \! \int_Y \frac{1}{2} \Big| \frac{dq}{d\gamma}(x,y) - y \Big|^2 \gamma(dxdy).
\end{equation*}
The $(\mathrm{M^2OT})$ problem above shares solution $(\ov\gamma,\ov{q}) = (\ov{\gamma},\ov\zeta \ov\gamma)$ with the two problems in Theorem \ref{thm:RK_reinvented}. Accordingly, $\ov\rho = \ov\zeta \# \ov\gamma$ is a forward Zolotarev projection, where the coupling map $\ov\zeta$ is expressed through the gradient of a $C^{1,1}$ potential $\ov{u}$, see \eqref{eq:ovzeta}. Moreover, the set of optimal plans $\ov\Gamma(\mu,\nu)$ offers a lot of flexibility. In particular, for ordered data $\mu \preceq_c \nu$, solutions $\ov\gamma$ are exactly martingale plans, which is also the case for the barycentric  $(\mathrm{WOT})$. The analogies with the results of \cite{gozlan2020} become even more crisp upon analysing the proof of Proposition \ref{prop:discrepancy} (Proposition \ref{prop:dispcrepancies} in the text), where the 1-Lipschitz cyclically monotone functions $\ov\zeta(x,\argu)$ serve as the Brenier maps from $\gamma_x$ to $\breve{\gamma}_x$.   

\subsection*{Acknowledgments} A large part of this paper has been  prepared during the author's postdoc at the Lagrange Mathematics and Computing Research Center in Paris. The author is grateful to Guillaume Carlier, Quentin Mérigot, and Filippo Santambrogio for their mentorship during his stay at the Lagrange Center. He is also indebted to  Guy Bouchitté for the collaboration \cite{bolbotowski2024kantorovich,bolbotowski2025} that sparked the topics studied in this paper.

\section{Notation and basic facts}
\label{sec:notation}
\noindent Let us start with the elementary definitions and notation:
\begin{itemize}
	\item $\pairing{\argu,\argu}$ is the canonical scalar product on  $\R^k$, while $\abs{x} = \sqrt{\pairing{x,x}}$ is the Euclidean norm.
	\item $\Mes(\Rd;\R^k)$ is the Banach space of vector-valued bounded Borel measures on $\Rd$.
	$\mathcal{P}(\Rd)$ is the space of probability measures, i.e. measures $\mu \in \Mes(\Rd;\R_+)$ with $\mu(\Rd)=1$.
	\item For $p > 0$ and $\mu \in \mathcal{P}(\Rd)$ we denote $m_p(\mu) = \int_\Rd \abs{x}^p \mu(dx)$, while $\mathcal{P}_p(\Rd)= \big\{ \mu \in \mathcal{P}(\Rd)\, : \, m_p(\mu) < +\infty\big\}$.
	\item For $\mu \in \mathcal{P}_1(\Rd)$ we denote $[\mu] = \int_\Rd x \,\mu(dx)$. The subset of probabilities with a fixed barycentre $\ba \in \Rd$ will be denoted by $\mathcal{P}_p^\ba(\Rd) = \big\{\mu \in \mathcal{P}_p(\Rd) \, : \, [\mu] = \ba \big\}$, where $p\geq 1$.
	\item The absolute continuity of  $\sigma \in \Mes(\Rd;\R^k)$  with respect to  $\mu \in \Mes(\Rd;\R_+)$ will be denoted  by $\sigma \ll \mu$. For such a pair of measures, $\frac{d \sigma}{d\mu}$ will be the Radon--Nikodym derivative. It is an element of  $L^1_\mu(\Rd;\R^k)$, it is an integrable function with respect to $\mu$.
	 \item By $\spt (\sigma)$ we will understand the topological support of a measure. For a closed set $X \subset \R^d$, the space $\Mes(X;\R^k)$ will consist of  measures in $\Mes(\Rd;\R^k)$ whose support is $X$. 
	\item For a Borel measurable map $T:\Rd \to \R^l$ by $T \# \sigma$ we understand the push-forward measure, i.e. $(T \# \sigma)(B) = \sigma( T^{-1}(B))$ for every Borel set $B \subset \R^l$.
	\item  For $m \leq n$, by $\pi_{i_1,\ldots,i_m}: (\Rd)^n \to (\Rd)^m$ we mean the projection map onto the coordinates $i_1 < \ldots < i_m$. The marginal measures of $\sigma \in \Mes((\Rd)^n;\R^k)$ will be denoted by $\pi_{i_1,\ldots,i_m} \# \sigma$.
	\item By $C(\Rd;\R^k)$ and $C_b(\Rd;\R^k)$ we understand the space of continuous and, respectively, bounded continuous vector functions; $C(\Rd) = C(\Rd;\R)$ for short. $C^{r,1}(\Rd)$ is the space of functions $u$ whose $r$-th derivative is Lipschitz continuous, i.e. $\mathrm{lip}(D^r u) < +\infty$.
	
	\item We say that a sequence of measures $\sigma_n \in \Mes(\Rd;\R^k)$ converges weakly to $\sigma$, denoted by $\sigma_n \rightharpoonup \sigma$, if $\int_\Rd \pairing{\Phi,d\sigma_n} \to \int_\Rd \pairing{\Phi,d\sigma}$ for any  $\Phi \in C_b(\Rd;\R^k)$.
	\item We say that a family of measures $S \subset \Mes(\R^d;\R^k)$ is tight if for every $\eps >0$ there exists a compact set $K \subset \Rd$ such that $\sup_{\sigma \in S} \abs{\sigma}(\Rd \backslash K) < \eps$, where $\abs{\sigma} \in \Mes(\Rd;\R_+)$ is the total variation measure of $\sigma$ (with respect to the Euclidean norm).
	\item For two probability measures $\mu,\nu \in \mathcal{P}_1(\Rd)$, the convex order is denoted by $\mu \preceq_c \nu$.
	\item For  $\mu \in \mathcal{P}(\R^n)$ consider the Borel measurable \textit{kernel} $\R^n \ni x \mapsto \lambda_x \in \mathcal{P}(\R^m)$, i.e. $x \mapsto \lambda_x(B)$ is Borel measurable for every Borel set $B \subset \R^m$. By  the generalized product $\gamma(dxdy) = \mu(dx) \otimes \lambda_x(dy)$ we understand the probability $\gamma \in \mathcal{P}(\R^n \times \R^m)$ such that,
	\begin{equation*}
		\iint_{\R^n \times \R^m} \varphi(x,y) \, \gamma(dxdy) = \int_{\R^n} \left( \int_{\R^m} \varphi(x,y)\, \lambda_x(dy) \right) \mu(dx) \qquad \forall\, \varphi \in C_b(\R^n \times \R^m).
	\end{equation*}
	Conversely,  any $\gamma \in \mathcal{P}(\R^n \times \R^m)$ can be disintegrated into $\gamma(dxdy) = \mu(dx) \otimes \gamma_x(dy)$ where the kernel $\gamma_x$ is uniquely determined for $\mu$-a.e. $x$, see \cite[Theorem 2.28]{ambrosio-fusco}.
	\item For $\mu, \nu \in \mathcal{P}(\Rd)$ whose supports are $X,Y$, respectively, the set of transport plans is defined as $\Gamma(\mu,\nu) = \big\{\gamma \in \mathcal{P}(X \times Y) \, : \, \pi_1 \# \gamma = \mu, \ \pi_2 \# \gamma = \nu \big\}$.
	\item If $\mu,\nu \in \mathcal{P}_1(\Rd)$, the set of martingale plans will be denoted by $\Gamma_{\mathrm{M}}(\mu,\nu) = \big\{ \gamma \in \Gamma(\mu,\nu) \, : \, [\gamma_x] = x \ \text{for $\mu$-a.e. $x$} \big\}$. A plan $\gamma \in \Gamma(\mu,\nu)$ belongs to $\Gamma_{\mathrm{M}}(\mu,\nu) $ if and only if for every bounded Borel function $\Phi:\Rd\to \Rd$,
	\begin{equation}
		\label{eq:classical_mart_cond}
		\int_X\! \int_Y \pairing{\Phi(x),y-x} \, \gamma(dxdy) =0.
	\end{equation}
\end{itemize}
\smallskip

If the barycentres of $\mu, \nu$ agree, i.e. $\mu,\nu \in \mathcal{P}_1^\ba(\Rd)$ for some $\ba \in \Rd$, then, in addition to $\Gamma(\mu,\nu)$, we will deal with the set of vector-valued couplings $Q(\mu,\nu)=\big\{ q \in\Mes(X \times Y;\Rd ) \,:\, \pi_1 \# q \nolinebreak = x\mu, \ \ \pi_2 \# q= y \nu  \big\}$. In turn, the set $\Gamma Q(\mu,\nu)$ consists of $(\gamma,q) \in \Gamma(\mu,\nu) \times Q(\mu,\nu)$ for which $q \ll \gamma$.

Assume a pair $(\gamma,q) \in \mathcal{P}(X \times Y) \times \Mes(X\times Y;\Rd)$ such that $q \ll \gamma$ and denote $\zeta = \frac{dq}{d\gamma} \in L^1_\gamma(X \times Y;\Rd)$. Then $(\gamma,q)$ is an element of $\Gamma(\mu,\nu) \times Q(\mu,\nu)$ if and only if,
\begin{equation}
	\label{eq:moment-martingale}
		\begin{cases}
		\iint \varphi(x)\, \gamma(dxdy)  = \int \varphi \,d\mu,  \\
		\iint \psi(y)\, \gamma(dxdy)  = \int \psi \,d\nu, 
	\end{cases}\quad
	\begin{cases}
		\iint\pairing{\Phi(x),\zeta(x,y)-x} \, \gamma(dxdy)  = 0,  \\
		\iint \pairing{\Psi(y),\zeta(x,y)-y} \, \gamma(dxdy)  = 0, 
	\end{cases}
\end{equation}
for any bounded Borel functions $\varphi,\psi:\Rd \to \R$ and $\Phi,\Psi:\Rd \to \Rd$. If $\mu,\nu \in \mathcal{P}_p(\Rd)$, then we can test with unbounded $\varphi,\psi$ satisfying $p$-th growth condition, e.g. $\abs{\varphi(x)} \leq C( 1 +\abs{x}^p)$.
If, moreover, $\iint \abs{\zeta}^p d \gamma < +\infty$ for $p>1$, then unbounded functions  $\Phi,\Psi$ of  growth $p' = \frac{p}{p-1}$ can be used. 

In the text, we will make use of the following fact which is an easy consequence of Strassen theorem and Jensen inequality.
If $\varphi:\Rd \to \R \cup \{+\infty\}$ is a strictly convex function, then for any ordered pair $\mu \preceq_c \nu$ such that $\int \varphi \, d\mu < + \infty$, the  equivalence below holds true,
\begin{equation}
	\label{eq:strict}
	\int_\Rd \varphi \,d\mu  = \int_\Rd \varphi \,d\nu  \qquad \Leftrightarrow \qquad  \mu = \nu. 
\end{equation}

The admissibility condition in the Zolotarev problem \eqref{eq:Z2} defining $Z_2(\mu,\nu)$ admits several equivalent forms. Indeed, it is a classical result that for a continuous function $u \in C(\Rd)$ we have,
\begin{equation}
	\label{eq:u_adm_char}
	u \in C^{1,1}(\Rd),\ \ \mathrm{lip}(\nabla u) \leq 1  \ \ \ \Leftrightarrow \ \ \ -\mathrm{Id} \preceq \nabla^2 u \preceq \mathrm{Id}
	  \ \ \  \Leftrightarrow \ \ \ \tfrac{1}{2}\abs{\argu}^2 \pm u  \ \  \text{are convex}.
\end{equation}
The second condition can be understood in the sense of distributions. Yet another equivalent condition was devised in \cite{legruyer2009}: $u$ is differentiable and,
\begin{align}
	\label{eq:3-point_ineq}
	\big({u}(x)+ \pairing{\nabla {u}(x),z-x}\big) -  \big({u}(y)  +  \pairing{\nabla {u}(y),z-y} \big) \leq \frac{1}{2} \Big( \abs{z-x}^2 + \abs{z-y}^2\Big) 
\end{align}
for every triple $(x,y,z) \in(\Rd)^3$, see \cite[Lemma 3.4]{bolbotowski2024kantorovich} for an alternative proof.

\section{Bi-martingale formulation for a general cost}
\label{sec:M2OT}

Throughout this section $\mu$ and $\nu$ will be probability measures on $\Rd$ with finite first order moments and  a common barycentre $\ba \in \Rd$, i.e. $\mu, \nu \in \mathcal{P}^\ba_1(\Rd)$. Their supports will be denoted by $X$ and $Y$.
 The cost function $\cost$ will be assumed to satisfy the following conditions: 
\begin{enumerate}
	\item[(H1)] $\cost$ is jointly lower semi-continuous on $X \times Y \times \Rd$ and bounded from below;
	\item[(H2)]  $z \mapsto \cost(x,y, z)$ is convex for every $(x,y) \in X \times Y$;
	\item[(H3)] $z \mapsto \cost(x,y, z)$ is equi-superlinear on $X \times Y$; namely,
	there exists a convex and superlinear function $h:\Rd \to \R$ such that for all $(x,y,z) \in X \times Y \times \Rd$,
	\begin{equation*}
		\cost(x,y,z) \geq  h(z).
	\end{equation*}
\end{enumerate}
The superlinerity assumption is crucial to ensure that there is a solution $(\gamma,q)$ of $(\mathrm{M^2 OT})$ for which $q \ll \gamma$, see \cite[Example 2.36]{ambrosio-fusco}. Only then the bi-martingale perspective on the couple $(\gamma,q)$ is valid. However, when the supports $X,Y$ are not compact, the condition (H3) fails for some important costs like (A) or (C) in Section \ref{ssec:bimatringale_intro}. To address that, we could instead work with a  weakened version of this condition, to which the proofs in this section could be easily adapted:
\begin{enumerate}
	\item[(H3')]  there exists a convex superlinear function $\tilde{h}$ integrable with respect to $\mu$ and $\nu$ along with scalars $\alpha,\beta \in \R$ such that for every $(x,y,z) \in X \times Y \in \Rd$ there holds,
	\begin{equation*}
		\cost(x,y,z) \geq \tilde{h}(z - \alpha x - \beta y).
	\end{equation*}
\end{enumerate}

\subsection{Existence of optimal bi-martingale plans}
\label{ssec:existence}

In this subsection we employ the direct method of the calculus of variations to show the well-posedness $(\mathrm{M^2 OT})$. Let us start by establishing the lower semi-continuity of the cost functional.

\begin{proposition}
	\label{prop:lsc}
	If  $\cost$ satisfies the assumptions (H1)-(H3), then the functional,
	\begin{align*}
		\Mes(X\times Y ; \R \times \Rd) \ni (\gamma,q) \ \mapsto \ \begin{cases}
			\int_X \! \int_Y \cost  \big(x,y,\frac{d q}{d\gamma}(x,y) \big) \gamma(dxdy) & \text{if \ \  $\gamma \geq 0$ and  $q \ll \gamma$,} \\
			+ \infty & \text{otherwise},
		\end{cases}
	\end{align*}
	is convex lower semi-continuous with respect to the topology of weak convergence.
\end{proposition}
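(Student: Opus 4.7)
Approach. I plan to realize $F$ as a Reshetnyak-type integral functional on vector-valued Radon measures. The trick is to view each pair $(\gamma,q)$ as a single measure $\sigma=(\gamma,q)\in\Mes(X\times Y;\R\times\Rd)$ and to rewrite $F$ via the \emph{perspective function} of $\cost$, which encodes both convexity/1-homogeneity (essential for lower semi-continuity on measures) and the implicit admissibility constraints $\gamma\ge 0$ and $q\ll\gamma$.

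Concretely, I would first use (H1) to normalize $\cost\ge 0$, then define
\[
\tilde\cost(x,y,t,z)=\begin{cases} t\,\cost(x,y,z/t) & t>0,\\ \cost^\infty(x,y,z) & t=0,\\ +\infty & t<0, \end{cases}
\]
where $\cost^\infty(x,y,\cdot)$ is the recession function of the convex map $\cost(x,y,\cdot)$. By (H3), $\cost^\infty(x,y,z)\ge h^\infty(z)$, and superlinearity of $h$ forces $h^\infty(z)=+\infty$ for $z\neq 0$ while $h^\infty(0)=0$. Classical perspective-function theory gives that $\tilde\cost(x,y,\cdot,\cdot)$ is convex and positively $1$-homogeneous on $\R\times\Rd$, while joint lower semi-continuity on $X\times Y\times\R\times\Rd$ will follow from (H1) together with the blow-up analysis at $t=0$ (see below). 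The key representation is then
\[
F(\gamma,q)=\int_{X\times Y}\tilde\cost\!\left(x,y,\frac{d\sigma}{d|\sigma|}\right)d|\sigma|,\qquad \sigma=(\gamma,q).
\]
For admissible $(\gamma,q)$, writing $q=\zeta\gamma$ yields $\sigma=(1,\zeta)\gamma$, $|\sigma|=\sqrt{1+|\zeta|^2}\,\gamma$, and $1$-homogeneity of $\tilde\cost$ reduces the right-hand side to $\int\cost(x,y,\zeta)\,d\gamma$. For $(\gamma,q)$ violating admissibility, the polar decomposition of $\sigma$ either exhibits a portion with negative $t$ (the negative part of $\gamma$) or a singular part of $q$ relative to $\gamma$ living in $\{t=0,\,z\neq 0\}$: in both cases $\tilde\cost=+\infty$ on a set of positive $|\sigma|$-measure, so the integral is $+\infty$, matching the definition of $F$.

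From here the conclusion is immediate: convexity of $F$ follows because $\sigma\mapsto\int\tilde\cost(x,y,d\sigma/d|\sigma|)\,d|\sigma|$ is convex as a consequence of convexity and $1$-homogeneity of $\tilde\cost$ in $(t,z)$, while $(\gamma,q)\mapsto\sigma$ is linear; and lower semi-continuity follows by applying Reshetnyak's theorem (see e.g.\ Ambrosio--Fusco--Pallara, Theorem~2.38, or Bouchitt\'e--Buttazzo) to the jointly l.s.c., convex, positively $1$-homogeneous integrand $\tilde\cost$.

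The main technical hurdle is the joint lower semi-continuity of $\tilde\cost$ across the hyperplane $\{t=0\}$. The nontrivial case is a sequence $(x_n,y_n,t_n,z_n)\to(x,y,0,z)$ with $t_n>0$; by the definition of the recession function one needs
\[
\liminf_{n}t_n\,\cost(x_n,y_n,z_n/t_n)\ \ge\ \cost^\infty(x,y,z).
\]
If $z=0$ the lower bound is $0$ and follows from $\cost\ge 0$; if $z\neq 0$, then $|z_n/t_n|\to\infty$, and (H3) combined with superlinearity of $h$ gives $\cost(x_n,y_n,z_n/t_n)/|z_n/t_n|\to+\infty$, so the product above blows up, matching $\cost^\infty(x,y,z)=+\infty$. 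This is precisely the step where hypothesis (H3) (or its weaker version (H3$'$)) enters in an essential way, and it is also what ensures that $F$ genuinely penalizes the singular part of $q$ with respect to $\gamma$.
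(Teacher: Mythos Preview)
Your proof is correct and follows essentially the same strategy as the paper: both arguments rewrite the functional via the perspective (positively $1$-homogeneous envelope) of $\cost$ in the variables $(t,z)$, verify its joint lower semi-continuity on $X\times Y\times\R\times\Rd$ (using (H3) for the blow-up at $t=0$, $z\neq 0$ and the lower bound from (H1) at $t=0$, $z=0$), and then invoke Reshetnyak's theorem \cite[Theorem~2.38]{ambrosio-fusco}. The only cosmetic differences are that you phrase the $t=0$ values through the recession function $\cost^\infty$ (which here collapses to $0$ or $+\infty$ by (H3)) and choose the specific dominating measure $|\sigma|$, whereas the paper writes out the cases explicitly and works with an arbitrary $\vartheta$ dominating both $\gamma$ and $q$.
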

\begin{proof}
	For every $(x,y) \in X \times Y$ and  $(t,w) \in \R \times \Rd$ define,
	\begin{equation*}
			g(x,y;t,w) := \begin{cases}
			\cost\big(x,y,\tfrac{w}{t}\big)\,t & \text{if $t>0$},\\
			0 & \text{if $t=0$, $w=0$}, \\
			+\infty & \text{if $t=0$, $w \neq 0$, or $t<0$.}
		\end{cases}
	\end{equation*}
	The function $(t,w) \mapsto g(x,y;t,w)$ is  positively 1-homogeneous and convex\footnote{Convexity and positive 1-homogeneity  is clear once $g(x,y;\argu,\argu)$ is identified as the Minkowski functional of the closed convex  set $\big\{(s,v) \in \R \times \Rd \, : \, s + \cost^*(x,y,v) \leq 0 \big\}$, where $\cost^*(x,y,\argu) = \big(\cost(x,y,\argu)\big)^*$ is the Fenchel-Legandre conjugate of $\cost$ computed with respect to the last variable.} for every $x,y$ owing to (H2). If $\vartheta$ is any finite positive Borel measure on $X \times Y$ such that $\gamma \ll \vartheta$, $q \ll \vartheta$, then the functional in the assertion can be written as,
	\begin{equation*}
		\int_X \! \int_Y g\Big(x,y; \,\frac{d \gamma}{d\vartheta}(x,y),\frac{d q}{d\vartheta}(x,y)\Big) \vartheta(dxdy).
	\end{equation*}
	The value of the integral does not depend on the choice of $\vartheta$ thanks to the 1-homogeneity. If $q = \zeta \gamma + q_s$, where $q_s \perp \gamma$, then choosing $\vartheta = \gamma + \abs{q_s}$ allows to see the equality between the functionals.
	
	Readily, the lower semi-continuity of the functional will follow from the Reshetnyak theorem (cf. \cite[Theorem 2.38]{ambrosio-fusco}) if we prove that that $g$ is lower semi-continuous jointly in all four variables. To that end, take a sequence $(x_n,y_n,t_n,w_n)$ converging to $(x_0,y_0,t_0,w_0)$. If $t_0 \neq 0$, then we have $w_n/t_n \to w_0/t_0$, and so the lower semi-continuity is a result of (H1). Once $t_0 = 0$ and $w_0 \neq 0$, we have $\liminf_{n} g(x_n,y_n;t_n,w_n) = +\infty$ by the superlinearity assumption (H3). The case when $t_0 <0$ gives the same $\liminf$. Finally, for $t_0=0$, $w_0 =0$, from (H1) we know that $\cost(x_n,y_n,w_n/t_n) \geq C$ for a real constant $C$, and thus $\liminf_{n} g(x_n,y_n;t_n,w_n) \geq 0$ in this case. 
\end{proof}

We must next address the issue of compactness. Firstly, let us recall that the set $\Gamma Q(\mu,\nu)$ defined in \eqref{eq:GammaQ} is non-empty for any probabilities $\mu,\nu \in \mathcal{P}^\ba_1(\Rd)$. Indeed, the pair
\begin{equation*}
	\gamma = \mu\otimes \nu, \qquad q = (x+y-\ba)\, \mu\otimes \nu
\end{equation*}
is universally an element of $\Gamma Q(\mu,\nu)$. Next, it is a classical result in optimal transport that the set of probabilities $\Gamma(\mu,\nu)$ is tight (see Section \ref{sec:notation} for definition), and its compactness follows by the Prokhorov theorem. This result does not extend to the set of vector measures $Q(\mu,\nu)$, which \textit{a priori} is neither tight nor even equi-bounded. We must therefore appeal to the coercivity of the cost functional stemming from (H3).

\begin{proposition}
	\label{prop:compactness}
	Assume probability measures $\mu,\nu \in \mathcal{P}^\ba_1(\Rd)$ and a cost function that satisfies the conditions (H1)-(H3). Then, for any real constant $C_0$, the set
	\begin{equation}
		\label{eq:compact_set}
		\left\{ (\gamma,q) \in \Gamma Q(\mu,\nu) \, : \, \int_X \! \int_Y  \cost  \Big(x,y,\frac{d q}{d\gamma}(x,y) \Big) \gamma(dxdy) \leq C_0 \right\}
	\end{equation}
	is compact for the topology of weak convergence in  $\Mes(\Rd \times \Rd ; \R \times \Rd)$.
\end{proposition}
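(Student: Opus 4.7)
The plan is to combine the direct method with Prokhorov's theorem: show (a) that the set is sequentially relatively compact in the weak topology on $\Mes(\Rd \times \Rd; \R \times \Rd)$, and (b) that it is sequentially closed under weak limits. Closedness turns out to be the easier half: Proposition \ref{prop:lsc} provides lower semi-continuity of the cost functional, and the value $+\infty$ is assigned to pairs with $q \not\ll \gamma$, so once a weak limit is shown to lie in $\Gamma(\mu,\nu) \times Q(\mu,\nu)$, both the absolute continuity $q \ll \gamma$ and the bound $C_0$ come for free. The real work therefore lies in establishing (a), and the delicate part is the vector component $q$, which \emph{a priori} is neither equi-bounded in total variation nor tight.

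Tightness of the probability factors $\gamma_n \in \Gamma(\mu,\nu)$ is classical, being inherited from the tightness of the fixed marginals $\mu$ and $\nu$. For the vector measures $q_n = \zeta_n \gamma_n$, with $\zeta_n = dq_n/d\gamma_n$, the only available lever is the superlinearity hypothesis (H3), which produces the uniform estimate
\begin{equation*}
    \int_{X \times Y} h(\zeta_n)\, d\gamma_n \leq C_0.
\end{equation*}
Since $h$ is convex, bounded below, and satisfies $h(z)/|z| \to +\infty$ as $|z| \to \infty$, for every $M > 0$ there exists $R_M$ such that $|z| > R_M$ implies $|z| \leq h(z)/M$. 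Splitting the integral of $|\zeta_n|$ over $\{|\zeta_n| \leq R_M\}$ and its complement then yields both a uniform bound on the total variation $|q_n|(X \times Y)$ and tightness of the family $\{q_n\}$, the latter by combining the splitting with tightness of $\{\gamma_n\}$ on compacts of $\Rd \times \Rd$. A componentwise Prokhorov argument, applied to the Jordan decompositions of the scalar signed measures $q_n^i$, then extracts a weakly convergent subsequence $(\gamma_n,q_n) \rightharpoonup (\gamma,q)$.

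For step (b), the marginal constraints defining $\Gamma(\mu,\nu)$ and $Q(\mu,\nu)$ amount to pairings against continuous bounded functions of $x$ only or $y$ only, and these pass to the weak limit; the relevant right-hand sides $\mu,\nu,\,x\mu,\,y\nu$ are genuine finite (vector) measures thanks to $\mu,\nu \in \mathcal{P}_1^\ba(\Rd)$. Hence $\gamma \in \Gamma(\mu,\nu)$ and $q \in Q(\mu,\nu)$. Proposition \ref{prop:lsc} then forces the cost at $(\gamma,q)$ to be at most $C_0$, which in particular implies $q \ll \gamma$, placing the limit in the set. I expect the principal obstacle throughout to be precisely the tightness and the total-variation bound for the vector family $\{q_n\}$: in contrast to the probability factor, there is no built-in compactness, and everything must be squeezed out of the coercivity furnished by (H3) (or its variant (H3')).
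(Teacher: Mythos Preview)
Your proposal is correct and follows essentially the same route as the paper: relative compactness via Prokhorov (tightness of $\gamma$ from the marginals, tightness and equi-boundedness of $q$ from the superlinearity (H3) through a splitting argument), and closedness via the lower semi-continuity of Proposition~\ref{prop:lsc} together with weak closedness of $\Gamma(\mu,\nu)\times Q(\mu,\nu)$. The only cosmetic difference is the choice of splitting set: you split according to $\{|\zeta_n|\leq R_M\}$, whereas the paper uses $B_\zeta=\{|\zeta|\leq 1+|x|+|y|\}$ and appeals to tightness of the family $\{(1+|x|+|y|)\gamma\}$; your version is in fact slightly leaner, since it only needs tightness of $\{\gamma_n\}$ itself.
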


\begin{corollary}
	\label{cor:existence}
	For $\mu,\nu \in \mathcal{P}^\ba_1(\Rd)$ and $\cost$ satisfying the conditions (H1)-(H3), the problem $(\mathrm{M^2 OT})$ admits a (in general non-unique) solution as long as $\inf(\mathrm{M^2 OT}) < +\infty$.
\end{corollary}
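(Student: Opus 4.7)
The plan is to apply the direct method of the calculus of variations, using Proposition \ref{prop:lsc} and Proposition \ref{prop:compactness} as the two main ingredients. Since both the lower semi-continuity and the compactness of sublevel sets have already been established, the proof reduces to a straightforward minimizing sequence argument.

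First I would assume that $\inf (\mathrm{M^2 OT}) < +\infty$ and pick a minimizing sequence $(\gamma_n, q_n) \in \Gamma Q(\mu,\nu)$ so that
\begin{equation*}
    \int_X \! \int_Y \cost\Big(x,y,\tfrac{d q_n}{d\gamma_n}(x,y)\Big)\gamma_n(dxdy) \ \longrightarrow \ \inf (\mathrm{M^2 OT}).
\end{equation*}
For $n$ large enough the cost of $(\gamma_n, q_n)$ is bounded by some constant $C_0 < +\infty$, so the sequence lies in the set \eqref{eq:compact_set}. By Proposition \ref{prop:compactness}, this set is compact with respect to weak convergence in $\Mes(\Rd \times \Rd; \R \times \Rd)$, hence up to extraction of a subsequence we get
\begin{equation*}
    (\gamma_n, q_n) \rightharpoonup (\gamma, q)
\end{equation*}
with $(\gamma,q)$ still in the compact sublevel set; in particular $(\gamma,q) \in \Gamma Q(\mu,\nu)$.

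Next I would invoke Proposition \ref{prop:lsc}, according to which the cost functional (extended by $+\infty$ whenever $q \not\ll \gamma$ or $\gamma$ is not nonnegative) is lower semi-continuous for the weak convergence. This yields
\begin{equation*}
    \int_X \! \int_Y \cost\Big(x,y,\tfrac{d q}{d\gamma}(x,y)\Big)\gamma(dxdy) \ \leq \ \liminf_{n\to\infty} \int_X \! \int_Y \cost\Big(x,y,\tfrac{d q_n}{d\gamma_n}(x,y)\Big)\gamma_n(dxdy) \ = \ \inf (\mathrm{M^2 OT}),
\end{equation*}
so $(\gamma, q)$ is a minimizer, which proves existence.

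There is essentially no obstacle beyond quoting the two previous propositions: the compactness result of Proposition \ref{prop:compactness} already packages both the tightness of $\{\gamma_n\}$ (automatic since $\Gamma(\mu,\nu)$ is tight) and the more subtle equi-boundedness and tightness of the vector-valued components $\{q_n\}$, which rely crucially on the superlinearity assumption (H3); meanwhile the extended-valued lower semi-continuity of Proposition \ref{prop:lsc} ensures that the absolute continuity $q \ll \gamma$ is automatically preserved at the limit, since any weak limit with singular part would be penalized with infinite cost and could not be a minimizer. Non-uniqueness cannot be ruled out in general, consistent with the statement, so the argument concludes at existence.
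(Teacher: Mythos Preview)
Your proof is correct and is exactly the direct-method argument that the paper intends: the corollary is stated without proof immediately after Proposition~\ref{prop:compactness}, as an immediate consequence of combining the lower semi-continuity of Proposition~\ref{prop:lsc} with the compactness of sublevel sets from Proposition~\ref{prop:compactness}. Your write-up simply makes this standard argument explicit.
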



\begin{proof}[Proof of Proposition \ref{prop:compactness}]
	Let us first show the relative compactness which, by the Prokhorov theorem, is equivalent to the tightness of the family of probabilities $\gamma$ and the tightness and equi-boundedness of the family of total variation measures $\abs{q}$, where $(\gamma,q)$ range in the set \eqref{eq:compact_set}; see \cite[Theorem 8.6.2]{bogachev2007} for the variant of the Prokhorov theorem for vector valued measures.

	The tightness of the set $\Gamma(\mu,\nu)$ is classical, see \cite{santambrogio2015,villani}. In fact, since $\mu,\nu$ have finite first order moments, the family $	\big\{ (1 +\abs{x}+\abs{y}) \, \gamma(dxdy) \, : \, \gamma \in \Gamma(\mu,\nu) \big\}$ can be shown to be
 tight as well. Let us fix a pair $(\gamma,q)$ in \eqref{eq:compact_set} and a number $\eps>0$. For the Borel function $\zeta = \frac{dq}{d\mu}$,  we define the Borel set,
	\begin{equation*}
		B_\zeta:= \Big\{ (x,y) \, : \, \abs{\zeta(x,y)} \leq 1 + \abs{x} + \abs{y}  \Big\},
	\end{equation*}
	and next we introduce the decomposition $q = q_1 + q_2$, where $q_1 = q \mres B_\zeta$ and $q_2 = q \mres B^c_\zeta$ are the relevant restrictions. By the tightness there exists a compact set $M_\eps \subset (\Rd)^2$ (independent of $(\gamma,q)$) such that $\iint_{M_\eps^c} (1 +\abs{x}+\abs{y}) \, \gamma(dxdy) < \frac{\eps}{2}$. Then, since $q = \zeta \gamma$,
	\begin{align}
		\label{eq:q1}
		\abs{q_1}(M_\eps^c) = \iint_{M_\eps^c \cap B_\zeta} \abs{\zeta(x,y)} \,\gamma(dxdy) \leq  \iint_{M_\eps^c \cap B_\zeta}  (1 +\abs{x}+\abs{y})\, \gamma(dxdy) \leq \frac{\eps}{2}.
	\end{align}
	
	To handle the part $q_2$ we will use the convex superlinear function $h:\Rd \to \R$ in the assumption (H3). Since $\cost$ is bounded from below thanks to (H1), it is not restrictive to assume that  $C_0>0$. Fix $r = r(\eps) >0$ such that $h(z) \geq \frac{2C_0}{\eps} \abs{z}$ whenever $\abs{z} \geq r(\eps)$. Such a number exists for every  $\eps>0$ by the superlinearity. Define the compact set
	$N_\eps :=  \big\{ (x,y) \, : \,  1 + \abs{x} + \abs{y} \leq r(\eps)  \big\}$ and observe that for every $(x,y) \in N_\eps^c \cap B_\zeta^c$ there holds that $\abs{\zeta(x,y)} > r(\eps)$, and, as a result, $h\big(\zeta(x,y)\big) \geq \frac{2C_0}{\eps} \, \abs{\zeta(x,y)}$. Accordingly, we have,
	\begin{align}
		\nonumber
		C_0 \geq \iint_{B_\zeta^c}  \! \cost\big(x,y,\zeta(x,y)\big)  \gamma(dxdy) \geq  \iint_{B_\zeta^c} \! h\big(\zeta(x,y)\big)  \gamma(dxdy)  \geq \iint_{N_\eps^c \cap B_\zeta^c } \! h\big(\zeta(x,y)\big)  \gamma(dxdy) &\\
		\label{eq:chain_h}
		 \qquad \geq \iint_{N_\eps^c \cap B_\zeta^c } \frac{2C_0}{\eps} \, \abs{\zeta(x,y)} \,  \gamma(dxdy) = \frac{2C_0}{\eps} \iint_{N_\eps^c \cap B_\zeta^c } d\abs{q} =  \frac{2C_0}{\eps} \abs{q_2}(N_\eps^c)&, 
	\end{align}
	which gives  $\abs{q_2}(N_\eps^c) \leq \frac{\eps}{2}$. Ultimately, by orthogonality of $q_1$ and $q_2$ we obtain for the compact set $K_\eps = M_\eps \cup N_\eps$,
	\begin{equation*}
		\abs{q}(K_\eps^c) =\abs{q_1}\big((M_\eps \cup N_\eps)^c\big) + \abs{q_2}\big((M_\eps \cup N_\eps)^c\big) \leq \abs{q_1}( M_\eps^c)+ \abs{q_2}( N_\eps^c) \leq \eps,
	\end{equation*}
	thus showing the tightness for $\abs{q}$.
	
	Let us now pass to the equi-boundedness of $\abs{q} = \abs{q_1}  + \abs{q_2}$. Equiboundedness of  $\abs{q_1}$ is clear by changing the set of integration in \eqref{eq:q1} to $B_\zeta$. For $\abs{q_2}$ we observe that, by the superlinearity and convexity of $h$, there exists a constant $C_1\geq 0$ such that $h(z) \geq  \abs{z} - C_1$. The equi-boundedness of $\abs{q_2}$ then follows from the inequality $C_0 \geq\iint_{B_\zeta^c} h\big(\zeta(x,y)\big)  \gamma(dxdy)$.
	
	We have thus showed relative compactness of the set \eqref{eq:compact_set}, and it remains to show that it is closed for the weak convergence. Note that the set can be rewritten by replacing the functional with the one in Proposition  \ref{prop:lsc} and the set $\Gamma Q(\mu,\nu)$ with $\Gamma(\mu,\nu) \times Q(\mu,\nu)$ (the functional enforces the absolute continuity $q \ll \gamma$). The latter set is closed for the weak convergence, while the lower semi-continuity result in Proposition \ref{prop:lsc} guarantees that the condition $q \ll \gamma$ is preserved at the accumulations points of weakly convergent sequences contained in \eqref{eq:compact_set}. The wished closedness of \eqref{eq:compact_set} thus follows, and the proof is complete.
\end{proof}

\subsection{Reformulation via 3-plans}
\label{ssec:3-plans}
To wrap up the section we present an equivalent formulation to $(\mathrm{M^2OT})$ where, instead of the pair of couplings $(\gamma,q)$, we look for a single probability but on the triple product $X \times Y \times \Rd$. Contrarily to $(\mathrm{M^2OT})$, it is a linear programming problem, and for the quadratic cost it has been put forth in the paper \cite{bolbotowski2024kantorovich}, a foundation for the present study,
\begin{align}
	\label{eq:3plans}
	\inf\left\{ \int_X \! \int_Y \! \int_{\Rd} \cost (x,y,z)\, \varpi(dxdydz) \ : \ 	\varpi \in \Sigma(\mu,\nu) \right\},
\end{align}
where the set of admissible 3-plans encodes the bi-martingale condition directly,
\begin{equation*}
	\Sigma(\mu,\nu) := \Big\{  \varpi \in  \mathcal{P}(X\times Y\times \Rd) \, : \,  \pi_{1,2} \# \varpi \in \Gamma(\mu,\nu), \ \  	\pi_{i,3} \#\varpi \text{ are martingale plans}   \Big\}.
\end{equation*}
This problem is more universal than $(\mathrm{M^2OT})$ as it does not entail convexity of  $\cost$ in the variable $z$.
Nonetheless, it is a mild relaxation of $(\mathrm{M^2OT})$ when $\cost(x,y,\argu)$ is convex, and the two problems are fully equivalent if this convexity is strict. 
\begin{proposition}
	\label{prop:gammaq-omega}
	For the costs $\cost(x,y,z)$ satisfying the assumptions (H1)-(H3),
	the problems $(\mathrm{M^2OT})$ and \eqref{eq:3plans} are equivalent in the following sense: $\inf (\mathrm{M^2OT})= \inf \eqref{eq:3plans}$ and,
	\begin{enumerate}[label={(\roman*)}]
		\item assuming that $(\gamma,q)=(\gamma,\zeta\gamma)$ solves $(\mathrm{M^2OT})$, the following probability solves \eqref{eq:3plans},
		\begin{equation}
			\label{eq:omega-gamma}
			\varpi = (\pi_1,\pi_2,\zeta)\# \gamma = \gamma(dxdy) \otimes \delta_{\zeta(x,y)}(dz);
		\end{equation}
		\item assuming that $\tilde{\varpi}$ solves \eqref{eq:3plans}, define the marginal $\tilde\gamma := \pi_{1,2}\# \tilde\varpi$ and perform the disintegration $\tilde\varpi(dxdydz) = \tilde\gamma(dxdy) \otimes \tilde\varpi_{x,y}(dz)$; then  the following pair solves $(\mathrm{M^2OT})$,
		\begin{equation*}
			(\tilde\gamma,\tilde{q}) = (\tilde\gamma,\tilde\zeta \tilde\gamma ), \qquad \text{where} \qquad \tilde\zeta(x,y) := [\tilde\varpi_{x,y}] = \int_{\Rd} z\, \tilde\varpi_{x,y}(dz).			
		\end{equation*} 
	\end{enumerate}
	Moreover, if $\cost(x,y,\argu)$ is strictly convex for all $(x,y) \in X \times Y$, then every solution of \eqref{eq:3plans} is of the form \eqref{eq:omega-gamma}.
\end{proposition}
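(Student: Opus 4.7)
The plan is to construct maps going in both directions between admissible points of $(\mathrm{M^2OT})$ and admissible 3-plans in $\Sigma(\mu,\nu)$. The map from $(\mathrm{M^2OT})$ into \eqref{eq:3plans} will be cost-preserving, whereas the map back will weakly decrease the cost by virtue of Jensen's inequality applied through the disintegration in the third variable. This simultaneously yields $\inf(\mathrm{M^2OT})=\inf\eqref{eq:3plans}$ and the correspondences in (i) and (ii). The strict-convexity statement is then obtained from the equality case in Jensen.

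For (i): given $(\gamma,\zeta\gamma)\in\Gamma Q(\mu,\nu)$, I define $\varpi=(\pi_1,\pi_2,\zeta)\#\gamma$, which, up to the usual identification, equals $\gamma(dxdy)\otimes\delta_{\zeta(x,y)}(dz)$. Then $\pi_{1,2}\#\varpi=\gamma\in\Gamma(\mu,\nu)$, and $\pi_{i,3}\#\varpi=(\pi_i,\zeta)\#\gamma$ is a martingale plan by the bi-martingale condition \eqref{eq:two_martingales} imposed on $(\gamma,\zeta\gamma)$, so $\varpi\in\Sigma(\mu,\nu)$. Concentration on the graph of $\zeta$ gives $\int\cost\,d\varpi=\iint\cost(x,y,\zeta(x,y))\,\gamma(dxdy)$, so the cost is exactly preserved and $\inf\eqref{eq:3plans}\leq\inf(\mathrm{M^2OT})$.

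For (ii): take $\tilde\varpi\in\Sigma(\mu,\nu)$ with finite cost, write $\tilde\gamma=\pi_{1,2}\#\tilde\varpi$ and disintegrate $\tilde\varpi=\tilde\gamma\otimes\tilde\varpi_{x,y}$. The first point to settle is that $\tilde\zeta(x,y):=[\tilde\varpi_{x,y}]$ is well-defined $\tilde\gamma$-a.e.\ and lies in $L^1_{\tilde\gamma}$: assumption (H3) combined with the superlinearity-plus-convexity inequality $h(z)\geq|z|-C_h$ and Fubini give $\int|z|\,\tilde\varpi_{x,y}(dz)<+\infty$ for $\tilde\gamma$-a.e.\ $(x,y)$ and $\iint|\tilde\zeta|\,d\tilde\gamma\leq\int|z|\,d\tilde\varpi<+\infty$. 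The bi-martingale condition on $(\tilde\gamma,\tilde\zeta\tilde\gamma)$ then follows from the corresponding martingale identities for $\pi_{1,3}\#\tilde\varpi$ and $\pi_{2,3}\#\tilde\varpi$: for any bounded Borel $\Phi:\Rd\to\Rd$,
\begin{equation*}
\iint\pairing{\Phi(x),\tilde\zeta(x,y)-x}\tilde\gamma(dxdy)=\iiint\pairing{\Phi(x),z-x}\tilde\varpi(dxdydz)=0,
\end{equation*}
and analogously for $\Psi(y)$. Hence $(\tilde\gamma,\tilde\zeta\tilde\gamma)\in\Gamma Q(\mu,\nu)$. Jensen's inequality applied slicewise, using the convexity of $\cost(x,y,\argu)$ from (H2), gives
\begin{equation*}
\int\cost\,d\tilde\varpi=\iint\!\left(\int\cost(x,y,z)\,\tilde\varpi_{x,y}(dz)\right)\tilde\gamma(dxdy)\geq\iint\cost\big(x,y,\tilde\zeta(x,y)\big)\,\tilde\gamma(dxdy),
\end{equation*}
so $\inf(\mathrm{M^2OT})\leq\inf\eqref{eq:3plans}$. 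Combining both directions yields equality of infima, and each construction carries optima to optima.

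For the final claim, suppose $\cost(x,y,\argu)$ is strictly convex and $\tilde\varpi$ solves \eqref{eq:3plans}. Then the Jensen inequality above is an equality between two quantities both equal to $\inf\eqref{eq:3plans}=\inf(\mathrm{M^2OT})$. Strict convexity of $\cost(x,y,\argu)$ forces, for $\tilde\gamma$-a.e.\ $(x,y)$, that the probability $\tilde\varpi_{x,y}$ is a Dirac mass at its barycentre, i.e.\ $\tilde\varpi_{x,y}=\delta_{\tilde\zeta(x,y)}$, whence $\tilde\varpi=(\pi_1,\pi_2,\tilde\zeta)\#\tilde\gamma$ is of the form \eqref{eq:omega-gamma}. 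The main technical hurdle in the whole argument is the integrability step for $\tilde\zeta$ in (ii): one must extract finite first moments of the conditional measures $\tilde\varpi_{x,y}$ from the global cost bound using only (H1)--(H3), and this is precisely why the superlinearity assumption (or its variant (H3')) is indispensable.
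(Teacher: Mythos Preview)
Your proposal is correct and follows essentially the same approach as the paper: the two-way construction between $(\gamma,\zeta\gamma)$ and $\varpi$, cost preservation in one direction, Jensen's inequality via disintegration in the other, and the equality case of Jensen under strict convexity are all exactly as in the paper's proof. Your treatment is in fact slightly more careful in spelling out the $L^1_{\tilde\gamma}$-integrability of $\tilde\zeta$ from (H3), a point the paper leaves implicit.
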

\begin{proof}
	Observe that $\pi_{1,2} \#\varpi = \gamma$ and $\pi_{i,3} \#\varpi =(\pi_i,\zeta)\# \gamma$. Therefore, $\varpi$ is admissible for \eqref{eq:3plans} by \eqref{eq:two_martingales}, and
	\begin{equation*}
		\inf \eqref{eq:3plans} \leq  \int_X \! \int_Y \! \int_{\Rd} \cost (x,y,z)\, \varpi(dxdydz) =  \int_X \! \int_Y \cost  \big(x,y,\zeta(x,y) \big) \gamma(dxdy) = \inf(\mathrm{M^2OT}).
	\end{equation*}
	
	To see that $(\tilde{\gamma},\tilde{q})$ is admissible for $(\mathrm{M^2OT})$, we check the conditions in \eqref{eq:moment-martingale}. The first pair of conditions is clear, since  $\tilde{\gamma} = \pi_{1,2} \# \tilde\varpi \in \Gamma(\mu,\nu).$ To check the second pair of conditions we take a function $\Phi \in C_b(\Rd;\Rd)$ and compute,
	\begin{align*}
		 \int_X \! \int_Y \langle\Phi(x), \tilde\zeta(x,y)  -x \rangle \,\tilde{\gamma}(dxdy) 
		= \int_X \! \int_Y \Big\langle\Phi(x),\int_{\Rd} z\, \tilde\varpi_{x,y}(dz)  -x \Big\rangle \,\tilde{\gamma}(dxdy)& \\
		  =\int_X \! \int_Y \! \int_{\Rd}  \pairing{\Phi(x),z-x} \,\tilde\varpi(dxdydz) = \int_X \! \int_{\Rd}  \pairing{\Phi(x),z-x} \,\pi_{1,2} \# \tilde\varpi(dxdz)&,
	\end{align*}
	which is zero by the characterization \eqref{eq:classical_mart_cond} of the martingale condition.
	Handling the last condition in \eqref{eq:moment-martingale} is similar.
	Accordingly, using Jensen's inequality we obtain,
	\begin{align*}
		\inf  (\mathrm{M^2OT}) \leq  \int_X \! \int_Y \cost  \big(x,y,\tilde\zeta(x,y) \big) \tilde\gamma(dxdy) &=   \int_X \! \int_Y \cost  \Big(x,y,\int_{\Rd} z\, \tilde\varpi_{x,y}(dz)\Big) \tilde\gamma(dxdy) \\
		& \leq \int_X \! \int_Y \! \int_{\Rd} \! \cost(x,y,z)\,\tilde{\varpi}(dxdydz) =  \inf \eqref{eq:3plans}.
	\end{align*}
	Combining with the previous inequality we get $\inf  (\mathrm{M^2OT})  =  \inf \eqref{eq:3plans}$ together with the assertions (i),\,(ii). In particular, we obtain that the Jensen inequality above must be an equality. With $\cost(x,y,\argu)$ strictly convex this is possible only if $\tilde{\varpi}_{x,y}$ is a Dirac delta measure, which furnishes the moreover part of the statement. 
\end{proof}

\section{Application to convex dominance with minimum cost}
\label{sec:conv_dom}

In this section $\mu,\nu$ are again probabilities in $\mathcal{P}_1^\ba(\Rd)$, where $\ba \in \Rd$ is a generic barycentre. For $f:\Rd \to \R \cup \{+\infty\}$ we take any convex lower semi-continuous function that is superlinear. The problem of finding a convex dominant that minimizes the $f$-cost reads as follows,
\begin{equation}
	\label{eq:conv_dom}
	\inf\left\{ \int_\Rd f(z) \,\rho(dz) \, : \, \rho \in \mathcal{P}_1(\Rd), \ \ \rho \succeq_c \mu, \ \ \rho \succeq_c \nu \right\}.
\end{equation}
It is an infinite dimensional linear programming problem.
The plan below is to show that minimizers in \eqref{eq:conv_dom} can be found by 
solving the problem $(\mathrm{M^2OT})$ for the cost $\cost(x,y,z) = f(z)$. Beforehand, we will show that, in dimension one, an explicit formula for the optimal dominant can be proposed by appealing to the join-semilattice structure of convex order on the real line.

\subsection{Optimal convex dominance on the real line}
\label{ssec:1D}

In this short subsection we work in dimension one, $d=1$, where optimal dominance problem \eqref{eq:conv_dom} attains a closed-form solution. This stems from the join-semilattice structure of the convex order on the real line \cite{shaked2007,hirsch2012}. Namely, for each pair $\mu,\nu \in \mathcal{P}_1(\R)$ there exists a unique least upper bound $\mu \vee \nu$.

In order to construct $\mu \vee \nu$, for any measure $\mu \in \mathcal{P}_1(\R)$
define,
\begin{equation*}
	\FF_\mu(x) := \int_\R (x-\xi)_+ \mu(d\xi) \qquad \forall\, x\in \R,
\end{equation*}
where  $a_+ = \max\{a,0\}$. The function $G: \R \to [0,\infty)$ is non-negative, finite valued, and convex. One can also check that the second distributional derivative $\FF_\mu''$, being \textit{a priori} a non-negative Borel measure, is equal back to the probability $\mu$, thus portraying $\FF_\mu$ as a double primitive function of $\mu$. 

Note that, for each $x$, $(x- \argu)_+$ is a special case of convex function. Therefore, we must have $\FF_\mu(x) \leq \FF_\nu(x)$ for all $x \in \R$ whenever the convex order $\mu \preceq_c \nu$ holds true. The converse turns out to be also true provided that the barycentres match.
\begin{proposition}
	\label{prop:cvx_order_char1D}
	Let us be given two measures $\mu,\nu \in \mathcal{P}_1(\R)$ on the real line sharing the barycentre: $[\mu] = [\nu]$. Then, $\mu \preceq_c \nu$ if and only if $\FF_\mu \leq \FF_\nu$. 
\end{proposition}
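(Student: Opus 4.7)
The forward implication is immediate: for each fixed $x \in \R$ the map $\xi \mapsto (x-\xi)_+$ is convex, so if $\mu \preceq_c \nu$ then testing with this function yields $F_\mu(x) \leq F_\nu(x)$ at every $x$.

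For the converse, the plan is to show that functions of the form $\xi \mapsto (x-\xi)_+$, together with affine functions, generate the cone of convex functions by integration. Concretely, for any $\varphi \in C^2(\R)$ that is convex and affine outside a compact interval $[-R,R]$, the representation
\begin{equation*}
\varphi(x) = a + bx + \int_\R (x-\xi)_+ \, \varphi''(\xi)\, d\xi
\end{equation*}
holds, where $a+bx$ is the affine branch of $\varphi$ on $(-\infty,-R]$; this is verified by checking both sides agree in value and derivative at $x=-R$ and have identical (classical) second derivative $\varphi''$. Plugging into $\int \varphi\, d(\nu-\mu)$ and applying Fubini gives
\begin{equation*}
\int_\R \varphi\, d(\nu-\mu) = \int_\R \big(F_\nu(\xi) - F_\mu(\xi)\big)\, \varphi''(\xi)\, d\xi,
\end{equation*}
since the contribution of $a+bx$ vanishes thanks to $\mu(\R)=\nu(\R)=1$ and $[\mu]=[\nu]$. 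Because $\varphi''\geq 0$ and by hypothesis $F_\nu \geq F_\mu$, the right-hand side is non-negative, so $\int \varphi\, d\mu \leq \int \varphi\, d\nu$ for this class of test functions.

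It remains to extend the inequality to arbitrary convex $\varphi:\R\to\R$ (with the integrals interpreted in the usual way for measures in $\mathcal{P}_1(\R)$). For this, I would approximate a general convex $\varphi$ from below by a sequence of convex $C^2$ functions with compactly supported second derivatives, for instance by first convolving $\varphi$ with a smooth non-negative mollifier and then replacing the convolved function outside $[-R,R]$ by its supporting affine lines at $\pm R$, letting $R\to\infty$. Passing to the limit is justified by monotone convergence together with the linear growth bound (any convex function with finite $\mu$- and $\nu$-integrals is dominated by an affine function plus an integrable remainder), which transfers the inequality to $\varphi$.

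The main obstacle is the approximation step: the representation argument is clean only when $\varphi''$ has compact support, so one must control both the pointwise convergence and the integrability of the approximants against $\mu$ and $\nu$ simultaneously. The assumption $\mu,\nu \in \mathcal{P}_1(\R)$ is exactly what makes this routine, since affine functions are integrable and any convex $\varphi$ lies above an affine support line.
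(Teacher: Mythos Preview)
Your argument is correct and is in fact the standard proof of this classical fact. The paper itself does not prove the proposition: it simply refers to \cite[Theorem 3.A.1]{shaked2007} and \cite[Proposition 2.4]{hirsch2012}. What you have written is essentially the argument one finds in those references, based on the integral representation $\varphi(x) = a + bx + \int (x-\xi)_+\,\varphi''(\xi)\,d\xi$ for smooth convex $\varphi$ affine at $-\infty$, followed by Fubini and an approximation.

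One small point worth tightening: in your Fubini step the inner integral is $\int (y-\xi)_+\,d(\nu-\mu)(y)$, which is not literally $F_\nu(\xi)-F_\mu(\xi)$, since by definition $F_\nu(\xi)=\int(\xi-y)_+\,\nu(dy)$. The two expressions differ by $[\nu]-[\mu]$ (use $(a)_+-(-a)_+=a$), so the barycentre hypothesis is invoked a second time here, not only to kill the affine part. This does not affect the validity of the argument.
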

\begin{proof}
	See, for instance, \cite[Theorem 3.A.1]{shaked2007} or  \cite[Proposition 2.4]{hirsch2012}.
\end{proof}

The idea for building the least upper bound $\mu \vee \nu$ takes its shape: it should be the second distributional derivative of the function $g(x) =(\FF_\mu \vee \FF_\nu)(x) :=  \max\{ \FF_\mu(x), \FF_\nu(x)\}$.
The closed formula for the optimal convex dominant in 1D can be readily given.

\begin{proposition}
	\label{prop:1D}
	Let us be given two measures $\mu,\nu \in \mathcal{P}_1(\R)$ on the real line with the same barycentre, $[\mu] = [\nu]$. Then, 
	\begin{equation}
		\label{eq:lub}
		\rho = \mu \vee \nu := (\FF_\mu \vee \FF_\nu) ''
	\end{equation}
	is a solution of the optimal convex dominant problem \eqref{eq:conv_dom} for  any convex cost function $f:\R\to\R \cup \{+\infty\}$ such that  $\inf \eqref{eq:conv_dom} < +\infty$.
	
	Moreover, if $f$ is strictly convex, then the solution $\rho = \mu \vee \nu$ is unique.
\end{proposition}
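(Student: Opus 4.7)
\textbf{Proof plan for Proposition \ref{prop:1D}.} The strategy is to identify $\rho = \mu \vee \nu$ as the least element, in the convex order, of the set of dominants of both $\mu$ and $\nu$. Once this is done, minimality for any convex $f$ is automatic by the very definition of $\preceq_c$, and uniqueness for strictly convex $f$ comes from the classical equivalence recalled in \eqref{eq:strict}.

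Set $g := \FF_\mu \vee \FF_\nu$. First I would gather the analytic properties of $g$. Being the pointwise maximum of two convex, non-negative, finite-valued functions, $g$ is itself convex, non-negative, and finite-valued. The boundary asymptotics are the crucial ingredient: since $\FF_\mu(x), \FF_\nu(x) \to 0$ as $x \to -\infty$, we get $g(x) \to 0$, and since $\FF_\mu(x) = x - [\mu] + o(1)$ and $\FF_\nu(x) = x - [\nu] + o(1)$ as $x \to +\infty$, using the common barycentre $[\mu]=[\nu]=\ba$, we obtain $g(x) = x - \ba + o(1)$. Hence the one-sided derivatives of $g$ run from $0$ at $-\infty$ to $1$ at $+\infty$. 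This guarantees that the non-negative Borel measure $\rho := g''$ has total mass $1$, i.e. $\rho \in \mathcal{P}(\R)$.

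Next I would show $\FF_\rho = g$. Both functions are convex with the same distributional second derivative $\rho$, so their difference is affine; the matching asymptotics at $-\infty$ force that affine function to vanish identically. In particular $\FF_\rho$ is finite everywhere, which in turn yields $\rho \in \mathcal{P}_1(\R)$ with $[\rho] = \ba$. From $\FF_\rho = \FF_\mu \vee \FF_\nu \geq \FF_\mu$ and $\FF_\rho \geq \FF_\nu$, Proposition \ref{prop:cvx_order_char1D} gives $\rho \succeq_c \mu$ and $\rho \succeq_c \nu$; i.e. $\rho$ is admissible in \eqref{eq:conv_dom}.

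The optimality argument is then immediate. Let $\rho' \in \mathcal{P}_1(\R)$ satisfy $\rho' \succeq_c \mu$ and $\rho' \succeq_c \nu$. Proposition \ref{prop:cvx_order_char1D} (applied to the barycentre-preserving convex order) gives $\FF_{\rho'} \geq \FF_\mu$ and $\FF_{\rho'} \geq \FF_\nu$, hence $\FF_{\rho'} \geq \FF_\mu \vee \FF_\nu = \FF_\rho$. Applying Proposition \ref{prop:cvx_order_char1D} once more in reverse (noting $[\rho'] = [\rho] = \ba$), we obtain $\rho' \succeq_c \rho$. By definition of the convex order,
\begin{equation*}
\int_\R f\, d\rho' \geq \int_\R f\, d\rho,
\end{equation*}
which proves optimality of $\rho$ for every convex $f$. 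The hypothesis $\inf\eqref{eq:conv_dom} < +\infty$ ensures $\int f\,d\rho < +\infty$, since any competitor dominates $\rho$ for $\preceq_c$ and $f$ is convex. If, in addition, $f$ is strictly convex, any other minimizer $\rho'$ satisfies $\rho' \succeq_c \rho$ together with $\int f\,d\rho' = \int f\,d\rho < +\infty$, so \eqref{eq:strict} forces $\rho' = \rho$, yielding uniqueness.

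The main subtlety I expect lies in the second step: justifying rigorously that $\rho := g''$ is a probability of total mass $1$ with $\FF_\rho = g$. This rests on a careful asymptotic analysis of $g$ at $\pm\infty$, which is where the hypothesis $[\mu] = [\nu]$ enters in an essential manner. The rest of the argument is structural: having exhibited a least element for $\preceq_c$ in the admissible set, linearity of $\rho \mapsto \int f\,d\rho$ and the definition of convex order take care of everything.
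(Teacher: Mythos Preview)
Your proposal is correct and follows essentially the same route as the paper: identify $\rho=(\FF_\mu\vee\FF_\nu)''$ as the least upper bound for $\preceq_c$ via the asymptotics of $g=\FF_\mu\vee\FF_\nu$ and Proposition~\ref{prop:cvx_order_char1D}, then deduce optimality for every convex $f$ and uniqueness from \eqref{eq:strict}. The only cosmetic difference is that the paper outsources the verification that $\rho\in\mathcal{P}_1(\R)$, $[\rho]=\ba$, and $\FF_\rho=g$ to \cite[Proposition~2.1]{hirsch2012}, whereas you sketch it directly from the boundary behaviour of $g$.
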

\begin{proof}
	The functions $\FF_\mu$, $\FF_\nu$ are convex, and by \cite[Proposition 2.1]{hirsch2012} we have $\lim_{x \to -\infty} {\FF_\mu}(x) = 0$ and $\lim_{x \to +\infty} (-{\FF_\mu}(x) + x)  = [\mu]$. The same is true for $\FF_\nu$. Then, $g =\FF_\mu \vee \FF_\nu$ is clearly convex, and the convergences $\lim_{x \to -\infty} {g}(x) = 0$ and $\lim_{x \to +\infty} (-{g}(x) + x)  = [\mu] = [\nu]$ are inherited from those for $\FF_\mu, \FF_\nu$. Again thanks to \cite[Proposition 2.1]{hirsch2012}, we have $\rho \in \mathcal{P}_1(\R)$, $[\rho] = [\mu] = [\nu]$, and $\FF_{\rho} =  {g}$. Since $\FF_\rho \geq \FF_\mu$ and $\FF_\rho \geq \FF_\nu$,  Proposition \ref{prop:cvx_order_char1D} renders $\rho$ admissible in \eqref{eq:conv_dom}.
	
	Let now $\breve{\rho}$ be any admissible measure for \eqref{eq:conv_dom}, in particular $[\breve{\rho}] = [\mu] = [\nu]$. Owing to Proposition \ref{prop:cvx_order_char1D} again, $\FF_{\breve{\rho}} \geq \FF_\mu$ and $\FF_{\breve{\rho}} \geq \FF_\nu$,  therefore $\FF_{\breve{\rho}} \geq {g} = \FF_{{\rho}}$. Using the same result once more, we see that $\breve{\rho} \succeq_c {\rho}$, and thus $\int_\R f d\breve{\rho}  \geq \int_\R f d{\rho}$, furnishing optimality of $\rho$. The moreover part of the statement now follows from \eqref{eq:strict}. 
\end{proof}

\subsection{Link with the bi-martingale optimal transport}

For a pair that is admissible for $(\mathrm{M^2OT})$, namely $(\gamma,q) = (\gamma,\zeta\gamma) \in \Gamma Q (\mu,\nu)$, a candidate for a convex dominant of $\mu$ and $\nu$ can be found through the push-forward,
\begin{equation}
	\label{eq:rhozetagamma_first}
	\rho = \zeta \# \gamma.
\end{equation}
In the introduction we have deduced that $\rho \succeq_c \mu$ and $\rho \succeq_c \nu$ based on Strassen theorem. For a more direct argument, we can exploit the equations \eqref{eq:2xmartingale_condition} holding for a feasible couple $(\gamma,\zeta\gamma)$. Accordingly, with the disintegration $\gamma(dxdy) = \mu(dx) \otimes \gamma_x(dy)$, we know that $ \int_Y \zeta(x,y) \,\gamma_x(dy) = x$ for $\mu$-a.e. $x$. Then, for any convex function $\varphi:\Rd \to \R$ Jensen's inequality  gives,
\begin{align*}
	\int_\Rd \varphi(z) \,\rho(dz) &= \int_X \! \int_Y \varphi\big( \zeta(x,y) \big) \, \gamma(dxdy)= \int_X \left( \int_Y \varphi\big( \zeta(x,y) \big) \, \gamma_x(dy) \right) \mu(dx) \\
	&\geq  \int_X \varphi\left( \int_Y \zeta(x,y) \,\gamma_x(dy)\right) \mu(dx) = \int_X \varphi(x) \,\mu(dx),
\end{align*}
thus establishing $\rho \succeq_c \mu$. The disintegration with respect to the other marginal, $\gamma(dxdy) =  \gamma_y(dx) \otimes \nu(dy)$, and the second equality in \eqref{eq:2xmartingale_condition} furnish $\rho \succeq_c \nu$.

With the change of variables \eqref{eq:rhozetagamma_first} in mind, we can now propose the bi-martingale reformulation of the optimal convex dominant problem \eqref{eq:conv_dom},
\begin{align}
	\label{eq:conv_dom_gammaq}
	\inf\left\{ \int_X \! \int_Y f  \Big(\frac{d q}{d\gamma}(x,y) \Big) \gamma(dxdy) \ : \ 	(\gamma,q) \in \Gamma Q(\mu,\nu)  \right\}.
\end{align}
To prove the equivalence between the two problems, one still have to show that the set of $\rho$ generated from $(\gamma,q) \in \Gamma Q(\mu,\nu)$ via \eqref{eq:rhozetagamma_first} is rich enough to produce minimizers in \eqref{eq:conv_dom}. The desired result will follow smoothly once we pass through the 3-plan formulation presented in Section \ref{ssec:3-plans}.

\begin{proposition}
\label{prop:rhozetagamma}
The equality $\inf \eqref{eq:conv_dom} = \inf \eqref{eq:conv_dom_gammaq}$ holds true, and
an admissible pair $(\gamma,q) = (\gamma,\zeta \gamma)  \in \Gamma Q(\mu,\nu)$ is a solution to \eqref{eq:conv_dom_gammaq} if and only if
\begin{equation*}
	\rho = \zeta \# \gamma
\end{equation*}
solves the optimal convex dominance problem \eqref{eq:conv_dom}.

If, in addition, $f$ is strictly convex, then to every  $\rho$ solving \eqref{eq:conv_dom} one can assign at least one pair $(\gamma,q) = (\gamma,\zeta \gamma)  \in \Gamma Q(\mu,\nu)$  such that $\rho = \zeta \# \gamma$.
\end{proposition}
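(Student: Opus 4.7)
The plan is to route through the 3-plan reformulation \eqref{eq:3plans} from Section~\ref{ssec:3-plans}. For the cost $\cost(x,y,z) = f(z)$, the assumptions (H1)-(H3) are inherited from the convexity, lower semi-continuity and superlinearity of $f$, so Proposition~\ref{prop:gammaq-omega} already delivers the equivalence between the bi-martingale problem \eqref{eq:conv_dom_gammaq} and the 3-plan problem \eqref{eq:3plans}, together with a dictionary between their solutions (and an extra rigidity statement when $f$ is strictly convex). The task therefore reduces to linking \eqref{eq:3plans} to the convex-dominance problem \eqref{eq:conv_dom} via the marginal map $\varpi \mapsto \rho := \pi_3\#\varpi$.

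One inclusion is cheap: for any $\varpi \in \Sigma(\mu,\nu)$, the martingale constraints $\pi_{i,3}\#\varpi \in \Gamma_{\mathrm{M}}(\cdot\,,\rho)$ force $\rho \succeq_c \mu$ and $\rho \succeq_c \nu$ by Jensen's inequality, while the cost of $\varpi$ equals $\int f\,d\rho$, since $f$ depends only on $z$. The main work, and the main obstacle, is the reverse construction: starting from a common convex dominant $\rho$, I need to produce a 3-plan $\varpi \in \Sigma(\mu,\nu)$ whose third marginal is $\rho$. For this I would invoke Strassen's theorem to obtain martingale plans $\tau_1 \in \Gamma_{\mathrm{M}}(\mu,\rho)$ and $\tau_2 \in \Gamma_{\mathrm{M}}(\nu,\rho)$, disintegrate each against the common marginal $\rho$,
\[
\tau_1(dxdz) = \rho(dz) \otimes \hat\tau_1^{\,z}(dx), \qquad \tau_2(dydz) = \rho(dz) \otimes \hat\tau_2^{\,z}(dy),
\]
and glue them through $\rho$ as
\[
\varpi(dxdydz) := \hat\tau_1^{\,z}(dx) \otimes \hat\tau_2^{\,z}(dy) \otimes \rho(dz).
\]
A direct marginal check yields $\pi_{i,3}\#\varpi = \tau_i$ and $\pi_{1,2}\#\varpi \in \Gamma(\mu,\nu)$, hence $\varpi \in \Sigma(\mu,\nu)$, and again the total 3-plan cost collapses to $\int f\,d\rho$. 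Combined with Proposition~\ref{prop:gammaq-omega}, this settles the equality $\inf\eqref{eq:conv_dom_gammaq} = \inf\eqref{eq:3plans} = \inf\eqref{eq:conv_dom}$.

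With the three infima identified, the characterization in the main assertion becomes almost automatic. By the push-forward identity $\iint f(\zeta(x,y))\,\gamma(dxdy) = \int f\,d(\zeta\#\gamma)$, any admissible pair $(\gamma,\zeta\gamma) \in \Gamma Q(\mu,\nu)$ and the associated dominant $\rho := \zeta\#\gamma$ realize exactly the same value in their respective problems, so optimality transfers between the two in both directions. For the 'moreover' part under strict convexity, I would start from an optimal $\rho$, build $\varpi \in \Sigma(\mu,\nu)$ with $\pi_3\#\varpi = \rho$ by the gluing above; since the cost of $\varpi$ equals $\int f\,d\rho = \inf\eqref{eq:3plans}$, this $\varpi$ solves \eqref{eq:3plans}. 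The moreover clause of Proposition~\ref{prop:gammaq-omega} then forces $\varpi_{x,y}$ to be a Dirac mass, which defines a Borel coupling $\zeta$ with $\varpi = (\pi_1,\pi_2,\zeta)\#\gamma$ for $\gamma := \pi_{1,2}\#\varpi$. Proposition~\ref{prop:gammaq-omega}(ii) assigns the optimal bi-martingale pair $(\gamma,\zeta\gamma) \in \Gamma Q(\mu,\nu)$, and by construction $\zeta\#\gamma = \pi_3\#\varpi = \rho$, completing the argument.
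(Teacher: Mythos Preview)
Your proof is correct and follows essentially the same route as the paper's: both establish the equality of infima by routing through the 3-plan problem \eqref{eq:3plans}, invoke Strassen's theorem plus a gluing through the common marginal $\rho$ to produce a 3-plan from a dominant, and then appeal to Proposition~\ref{prop:gammaq-omega} (including its strict-convexity clause) to pass back to the bi-martingale formulation. The only cosmetic differences are that you spell out the gluing construction explicitly via disintegration against $\rho$, whereas the paper simply cites the gluing lemma, and that you run the easy inequality through \eqref{eq:3plans} as well, whereas the paper does that direction directly from the push-forward identity $\iint f(\zeta)\,d\gamma = \int f\,d(\zeta\#\gamma)$.
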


Owing to the existence result for $(\mathrm{M^2OT})$ in Corollary \ref{cor:existence}, existence of an optimal dominant follows as well.

\begin{corollary}
	\label{cor:existence_dominant} For every convex superlinear function $f:\Rd \to \R \cup +\infty$, the optimal convex dominance problem \eqref{eq:conv_dom} admits a (in general non-unique) solution if $\inf \eqref{eq:conv_dom} < + \infty$.
\end{corollary}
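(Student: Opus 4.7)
The plan is to reduce existence for \eqref{eq:conv_dom} to the already-established existence result for $(\mathrm{M^2OT})$, using the equivalence in Proposition \ref{prop:rhozetagamma}. First, I would check that the three-point cost $\cost(x,y,z) := f(z)$ satisfies the hypotheses (H1)--(H3) of Section \ref{sec:M2OT}. Joint lower semi-continuity and convexity in $z$ (H1)--(H2) are immediate from the assumptions on $f$. For the global lower bound required in (H1), note that a convex lower semi-continuous superlinear function $f:\Rd \to \R \cup \{+\infty\}$ is automatically bounded below, since superlinearity forces the coercivity direction to dominate any affine minorant. Equi-superlinearity (H3) holds trivially with the choice $h := f$ (shifted by a constant to make it real-valued), since the cost does not depend on $(x,y)$ at all.

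Next, I would transport the finiteness assumption across the equivalence of Proposition \ref{prop:rhozetagamma}: from $\inf \eqref{eq:conv_dom} < +\infty$ and the identity $\inf \eqref{eq:conv_dom} = \inf \eqref{eq:conv_dom_gammaq}$, one obtains $\inf (\mathrm{M^2OT}) < +\infty$ for the cost $\cost(x,y,z)=f(z)$. Corollary \ref{cor:existence} then provides a minimizer $(\gamma,q) = (\gamma,\zeta\gamma) \in \Gamma Q(\mu,\nu)$ of \eqref{eq:conv_dom_gammaq}.

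Finally, I would invoke the forward direction of Proposition \ref{prop:rhozetagamma} to conclude that the push-forward
\begin{equation*}
  \rho := \zeta \# \gamma
\end{equation*}
is an admissible minimizer of \eqref{eq:conv_dom}, yielding the claimed solution. Non-uniqueness in general is inherited from the non-uniqueness already present at the level of $(\mathrm{M^2OT})$ and the fact that the map $(\gamma,\zeta\gamma) \mapsto \zeta \# \gamma$ is typically many-to-one.

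There is no real obstacle here: the entire content is in the reduction supplied by Proposition \ref{prop:rhozetagamma} and the compactness/lower-semicontinuity machinery of Section \ref{ssec:existence}. The only point that deserves a line of care is checking that $f$, being convex lower semi-continuous and superlinear, is globally bounded below, so that hypothesis (H1) is met.
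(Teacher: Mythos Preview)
Your proposal is correct and follows exactly the route the paper takes: the corollary is stated there as an immediate consequence of Proposition \ref{prop:rhozetagamma} combined with the existence result Corollary \ref{cor:existence}, and you have spelled out precisely that reduction, including the routine verification of (H1)--(H3) for $\cost(x,y,z)=f(z)$. The only cosmetic remark is that your justification of non-uniqueness is slightly off (many-to-oneness of $(\gamma,\zeta\gamma)\mapsto \zeta\#\gamma$ would argue against, not for, non-uniqueness of $\rho$); the paper handles this instead by exhibiting an explicit example later on, so the parenthetical ``in general non-unique'' needs no argument here.
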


\begin{remark}
	\label{rem:conv_dom}
	Before giving the proof, let us make several comments on the result:
	\begin{itemize}
		\item[(a)] A key consequence of Proposition \ref{prop:rhozetagamma}  can be deduced when $\mu,\nu$ are discrete. Since the discreteness automatically propagates to  plans $\gamma$, our result shows that there are solutions $\rho$  for  \eqref{eq:conv_dom} which are discrete as well (all of them are if $f$ is strictly convex). Contrarily, Proposition \ref{prop:1D}  shows that for generic data  that are absolutely continuous with respect to Lebesgue measure (even with smooth densities), solutions $\rho = (G_\mu \vee G_\nu)''$ contain atoms: maximum of two smooth functions is typically not smooth.
		\item[(b)] Related to the previous comment is the influence of  Proposition \ref{prop:rhozetagamma} on the computational perspectives for the optimal convex dominance problem \eqref{eq:conv_dom}. Indeed, if $\mu,\nu$ are discrete, then  \eqref{eq:conv_dom_gammaq} is a finite dimensional convex problem. Meanwhile, attacking \eqref{eq:conv_dom} directly would typically involve looking for $\rho$ supported on a fixed grid of points, and the accuracy of the solution would depend on its resolution, even for data as simple as in Fig.~\ref{fig:bimart}. See Section \ref{ssec:conic}, where this observation is reflected in practice.
		\item[(c)] The formula $\rho = \zeta \# \gamma$ is reminiscent of \cite[Proposition 4.2]{agueh2011barycenters}, where the Wasserstein barycentre is found as the push-forward of an optimal multi-marginal plan \cite{gangbo1998optimal} through the linear barycentre map $T(x_1,\ldots,x_n) = \sum_{i} \lambda_i x_i$. Let us briefly note that adaptation of \eqref{eq:conv_dom_gammaq} to an \textit{$n$-martingale} problem, that would underlie the optimal convex dominance for $n$ marginals, is straightforward.  
		\item[(d)] Non-uniqueness of solutions to \eqref{eq:conv_dom} will be demonstrated in Example \ref{ex:non-uniqueness} for the quadratic cost $f = \abs{\argu}^2$. It is related to the lack of displacement convexity for the forward cone $\{ \argu \succeq_c \mu\}$ as opposed to the backward cone $\{ \argu \preceq_c \mu\}$, see \cite[Example 8.2]{kim2024}.
		\item[(e)] From the definition of convex order, we can easily see that the  necessary condition for the finiteness of the total cost in \eqref{eq:conv_dom} is that $\max\{ \int f\,d\mu, \int f\,d\nu \} < +\infty$. It is also sufficient if the cost function satisfies a doubling condition. Assume that $[\mu] = [\nu] =0$ and there exists $C >0$ such that $f(2x) \leq C f(x)$ for every $x \in \Rd$. Then, we can use the fact that the convolution $\mu *\nu$ dominates both probabilities for convex order to make the relevant estimate.
		For instance, if $f = \abs{\argu}^p$ for $p>1$, existence of solutions with finite cost in \eqref{eq:conv_dom}  is equivalent to finiteness of the $p$-th moments of $\mu$ and $\nu$. 
		\item[(f)] The strict convexity assumption in the moreover part in Proposition \ref{prop:rhozetagamma} is essential. Indeed, consider the trivial data $\mu =\nu= \delta_0$ and assume that $f$ is affine on a convex set $A \ni \nolinebreak 0$. Then, any centred probability $\rho$ that is supported on $A$ solves \eqref{eq:conv_dom}, whilst $\Gamma Q(\mu,\nu) = \{(\delta_{(0,0)},0)\}$.
	\end{itemize}
\end{remark}

\begin{proof}[Proof of Proposition \ref{prop:rhozetagamma}]
	Take any pair $(\gamma,q) = (\gamma,\zeta \gamma) \in \Gamma Q$ and define $\rho = \zeta \# \gamma$. We have,
	\begin{align}
		\label{eq:chain_gammaq_rho}
		\iint f\Big( \frac{dq}{d\gamma}(x,y)\Big) \,\gamma(dxdy) = 	\iint f\big( \zeta(x,y)\big) \,\gamma(dxdy)  = \int f(z)\,\rho(dz)  \geq \inf \eqref{eq:conv_dom}.
	\end{align}
	Taking the infimum of the left hand side with respect to $(\gamma,q) \in \Gamma Q(\mu,\nu)$ furnishes the inequality $\inf \eqref{eq:conv_dom_gammaq} \geq \inf \eqref{eq:conv_dom}$.
	
	Conversely, take any admissible $\rho\in \mathcal{P}_1(\Rd)$ such that $\rho \succeq_c\mu$ and $\rho \succeq_c \nu$. By Strassen theorem, we can select martingale plans $\gamma_1 \in \Gamma_{\mathrm{M}}(\mu,\rho)$, $\gamma_2 \in \Gamma_{\mathrm{M}}(\nu,\rho)$.  Next, by virtue of the gluing lemma (cf. \cite[Lemma 7.6]{villani}), a probability  ${\varpi} \in \mathcal{P}(X \times Y \times \Rd)$ can be constructed such that $\pi_{1,3} \# {\varpi} = {\gamma}_1$,  $\pi_{2,3} \# {\varpi} = {\gamma}_2$,  and $\pi_{1,2} \# \varpi \in \Gamma(\mu,\nu)$. Therefore, clearly $\varpi \in \Sigma(\mu,\nu)$. Accordingly, Proposition \ref{prop:gammaq-omega} for the cost $\cost(x,y,z) = f(z)$ yields,
	\begin{align}
		\label{eq:chain_omega_rho}
		\inf \eqref{eq:conv_dom_gammaq} = \inf \eqref{eq:3plans} \leq \iiint f(z) \,\varpi(dxdydz) = \int f(z) \,\rho(dz),
	\end{align}
	since $\pi_3 \# \varpi = \rho$.	Minimizing the right hand side with respect to $\rho$ dominating $\mu,\nu$ gets us to the converse inequality  $\inf \eqref{eq:conv_dom_gammaq} \leq \inf \eqref{eq:conv_dom}$.
With the equality $\inf \eqref{eq:conv_dom} = \inf \eqref{eq:conv_dom_gammaq}$ at hand, the equivalence between optimality of $(\gamma,q)=(\gamma,\zeta\gamma)$ and optimality of $\rho = \zeta \# \gamma$ now follows from \eqref{eq:chain_gammaq_rho}.

Assume now that $f$ is strictly convex and $\rho$ is optimal in \eqref{eq:conv_dom}. Construct $\varpi$ as above. From  \eqref{eq:chain_omega_rho} we can deduce optimality of $\varpi$ in \eqref{eq:3plans}, and, thanks to the strict convexity of $f$, Proposition \ref{prop:gammaq-omega} guarantees the form $\varpi(dxdydz) = \gamma(dxdy) \otimes \delta_{\zeta(x,y)}(dz)$. We can now see that $\zeta \# \gamma = \pi_3 \# \varpi = \rho$, which ends the proof.
\end{proof}

\section{The quadratic case and the Zolotarev-2 distance}

\label{sec:Zol}

The results of this sections are fuelled by the main result of the paper \cite{bolbotowski2024kantorovich}, which can be referred to as the second-order Kantorovich--Rubinstein duality, where in place of the Wasserstein-1 distance (equal to the Zolotarev-1 distance) we put the Zolotarev-2 distance, i.e.
\begin{equation}
	\label{eq:Z2_again}
	Z_2(\mu,\nu) = \max\left\{ \int_\Rd u \, d(\mu-\nu) \, : \, u \in C^{1,1}(\Rd), \ \mathrm{lip}( \nabla  u) \leq 1 \right\}.
\end{equation}
In the entire Section \ref{sec:Zol}, $\mu,\nu$ will be probability distributions with equal barycentre $\ba \in \Rd$ and of finite second moments, namely $\mu,\nu \in \mathcal{P}^\ba_2(\Rd)$.  For such data, the maximum in the \textit{Zolotarev problem} \eqref{eq:Z2_again}  is always attained, cf. \cite[Proposition 2.1]{bolbotowski2024kantorovich}. Solutions are not unique. In particular, adding an affine function does not change optimality of $u$, but less trivial modifications are possible, for instance far enough from the convex hull of the union of supports of $\mu,\nu$.

\subsection{Second-order Kantorovich--Rubinstein duality revisited}
\label{ssec:RK}

A convenient intermediate point on the path towards the second-order Kantorovich--Rubinstein duality is the optimal convex dominant problem for the quadratic cost $f = \abs{\argu}^2$, 
\begin{equation}
	\label{eq:C_rho}
	\mathcal{C}(\mu,\nu) := \min\Big\{m_2(\rho) \, : \, \rho \in \mathcal{P}_2(\Rd), \ \ \rho \succeq_c \mu, \ \ \rho \succeq_c \nu \Big\},
\end{equation}
where $m_2(\rho) = \int_\Rd \abs{z}^2 \rho(dz)$. Since $\mathcal{C}(\mu,\nu)$ is always finite for  $\mu,\nu \in \mathcal{P}^\ba_2(\Rd)$ (cf. Remark \ref{rem:conv_dom}(e)), solutions to \eqref{eq:C_rho} exist by Corollary \ref{cor:existence_dominant}. The dual problem entails maximizing with respect to a pair of convex potentials. Then, its non-trivial transformation founded on the fact that convexification preserves semi-concavity, has lead to the key equality of the paper \cite{bolbotowski2024kantorovich}.
\begin{theorem}[\cite{bolbotowski2024kantorovich}, Theorem 1.3]
	\label{thm:RK}
	For any pair of probabilities $\mu,\nu \in \mathcal{P}^\ba_2(\Rd)$  the following equality holds true,
	\begin{equation}
		\label{eq:CZ}
		Z_2(\mu,\nu) = \mathcal{C}(\mu,\nu) - \frac{m_2(\mu)+ m_2(\nu)}{2}.
	\end{equation}
\end{theorem}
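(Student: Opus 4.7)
The plan is to establish $\mathcal{C}(\mu,\nu) = Z_2(\mu,\nu) + \tfrac{1}{2}\bigl(m_2(\mu)+m_2(\nu)\bigr)$ by proving the two inequalities separately. The direction $\mathcal{C}(\mu,\nu) \geq Z_2(\mu,\nu) + \tfrac{1}{2}(m_2(\mu)+m_2(\nu))$ is direct: for any admissible $u$ in \eqref{eq:Z2_again}, the characterization \eqref{eq:u_adm_char} tells us that both $\tfrac{1}{2}|x|^2 \pm u$ are convex. Testing $\rho \succeq_c \mu$ with the $+$ potential and $\rho \succeq_c \nu$ with the $-$ potential and summing yields
$m_2(\rho) \geq \tfrac{1}{2}(m_2(\mu)+m_2(\nu)) + \int u\, d(\mu-\nu)$,
and optimizing first in $\rho$ and then in $u$ gives the desired inequality.

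For the reverse inequality I would first derive a Kantorovich-type duality for $\mathcal{C}$, namely
\begin{equation*}
\mathcal{C}(\mu,\nu) = \sup\Bigl\{ \textstyle\int \varphi\, d\mu + \int \psi\, d\nu \,:\, \varphi,\psi \text{ convex lsc},\ \varphi(z)+\psi(z) \leq |z|^2 \ \forall z \in \Rd \Bigr\}.
\end{equation*}
Weak duality is immediate from the definition of convex order. For the absence of a gap I would lift the primal via Strassen's theorem to pairs of martingale plans $\gamma_1 \in \Gamma_{\mathrm{M}}(\mu,\rho)$, $\gamma_2 \in \Gamma_{\mathrm{M}}(\nu,\rho)$ glued by a common $\rho$, turning \eqref{eq:C_rho} into an infinite-dimensional LP with linear constraints, and then apply Fenchel--Rockafellar duality; attainment of the primal and tightness of minimizing sequences follow from the coercivity of $m_2$ as in the arguments of Section \ref{ssec:existence}.

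The crux is then to ``tighten'' the pointwise inequality $\varphi+\psi \leq |z|^2$ into equality without decreasing the objective. Given admissible $(\varphi,\psi)$, set $g(z) := |z|^2 - \psi(z)$; since $\psi$ is convex, $g$ is $1$-semi-concave in the sense that $|z|^2 - g = \psi$ is convex. The decisive technical lemma is that the convex envelope $\tilde\varphi := g^{**}$ is again $1$-semi-concave, i.e.\ $|z|^2 - \tilde\varphi$ remains convex. One proves this by observing that every affine minorant $\ell \leq g$ satisfies $|z|^2 - \ell \geq \psi$ and $|z|^2 - \ell$ is a paraboloid of curvature $2\,\mathrm{Id}$, so $|z|^2 - g^{**} = \inf_{\ell \leq g} (|z|^2 - \ell)$ is an infimum of such paraboloids dominating $\psi$, hence convex. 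Setting $\tilde\psi := |z|^2 - \tilde\varphi$ then produces a pair of convex functions with $\tilde\varphi + \tilde\psi = |z|^2$; since $\varphi$ is convex with $\varphi \leq g$, we have $\varphi \leq \tilde\varphi$ and correspondingly $\tilde\psi \geq \psi$, so the objective cannot decrease.

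Finally, any pair $(\tilde\varphi,\tilde\psi)$ of convex functions with $\tilde\varphi+\tilde\psi = |z|^2$ is in bijection with a function $u := \tilde\varphi - \tfrac{1}{2}|z|^2 = \tfrac{1}{2}|z|^2 - \tilde\psi$ which, by \eqref{eq:u_adm_char}, is admissible in \eqref{eq:Z2_again}. A direct substitution yields $\int \tilde\varphi\, d\mu + \int \tilde\psi\, d\nu = \int u\, d(\mu-\nu) + \tfrac{1}{2}(m_2(\mu)+m_2(\nu))$, and passing to the supremum completes the argument. I expect the main obstacles to be the no-duality-gap step in the second paragraph and the semi-concavity preservation under convexification in the third; the latter is the decisive structural fact that makes the second-order Kantorovich--Rubinstein identity work.
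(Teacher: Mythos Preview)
The paper does not prove this theorem; it imports it from \cite{bolbotowski2024kantorovich} and only describes the strategy in one sentence: ``The dual problem entails maximizing with respect to a pair of convex potentials. Then, its non-trivial transformation founded on the fact that convexification preserves semi-concavity, has lead to the key equality.'' Your plan matches this description exactly: you set up the dual of $\mathcal{C}$ over pairs of convex potentials, tighten $\varphi+\psi\leq|z|^2$ to equality by replacing $\varphi$ with the convex envelope of $|z|^2-\psi$, and then read off an admissible $u$. So the architecture is right, and you have correctly isolated the decisive lemma.

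However, your one-line proof of that lemma is wrong. You argue that $|z|^2-g^{**}=\inf_{\ell\leq g}(|z|^2-\ell)$ is an infimum of paraboloids dominating the convex function $\psi$, ``hence convex.'' An infimum of fixed-curvature paraboloids dominating a convex function need not be convex: with $\psi\equiv 0$, both $|z|^2$ and $|z-e_1|^2$ dominate $\psi$, yet $\min(|z|^2,|z-e_1|^2)$ is not convex. What saves the day is not that each paraboloid dominates $\psi$, but that you are taking the infimum over the \emph{full} family of affine minorants of $g$. A correct argument uses the Carath\'eodory representation $g^{**}(z_0)=\inf\{\sum_i\lambda_i g(w_i):\sum_i\lambda_i w_i=z_0\}$ together with the translation trick: given a near-optimal decomposition of $z_0$, shift every $w_i$ by $z-z_0$ to get a decomposition of $z$, then apply the semi-concavity inequality of $g$ at each $w_i$ and sum. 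This yields a touching paraboloid from above at every $z_0$, i.e.\ semi-concavity of $g^{**}$. You already flagged this step as the main obstacle; the point is that the obstacle is real and your proposed bypass does not work.
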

Expressing the Zolotarev-2 distance by means of the bi-martingale transport is readily at our disposal thanks to Proposition \ref{prop:rhozetagamma} which states that,
\begin{align}
	\label{eq:C_gammaq}
	\mathcal{C}(\mu,\nu) =  \min\left\{ \int_X \! \int_Y   \Big|\frac{d q}{d\gamma}(x,y) \Big|^2 \gamma(dxdy) \ : \ 	(\gamma,q) \in \Gamma Q(\mu,\nu)  \right\}.
\end{align}
It is the $(\mathrm{M^2OT})$ problem for the cost $\cost(x,y,z) = \abs{z}^2$. Passage to the cost $\cost(x,y,z) = \frac{1}{2}\abs{z-x}^2 + \frac{1}{2} \abs{z-y}^2$, advertised as cost (B) in the introduction, is a straightforward use of the properties of the set $\Gamma Q (\mu,\nu)$. Together with \eqref{eq:C_gammaq}, the following result yields the first part of Theorem \ref{thm:RK_reinvented}.

\begin{corollary}
	\label{cor:RK_duality}
	We have the equality,
	\begin{align*}
		Z_2(\mu,\nu) &=  \min\left\{ \int_X \! \int_Y \!\frac{1}{2} \bigg( \Big| \frac{dq}{d\gamma}(x,y) - x \Big|^2\! \!\!+ \Big| \frac{dq}{d\gamma}(x,y) - y \Big|^2  \bigg)   \gamma(dxdy) \, : \, 	(\gamma,q) \in \Gamma Q(\mu,\nu)  \right\}
	\end{align*}
	and the set of minimizers is identical to the set of minimizers in \eqref{eq:C_gammaq}.
\end{corollary}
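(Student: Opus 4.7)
The plan is to show that on the subset of $\Gamma Q(\mu,\nu)$ of finite-cost pairs, the two cost functionals in \eqref{eq:C_gammaq} and the displayed identity differ only by the additive constant $\tfrac{1}{2}(m_2(\mu)+m_2(\nu))$. Once this is established, the equality of minimizers is immediate, and the value of the new minimum becomes $\mathcal{C}(\mu,\nu)-\tfrac{1}{2}(m_2(\mu)+m_2(\nu))$, which coincides with $Z_2(\mu,\nu)$ by the Kantorovich--Rubinstein duality of Theorem \ref{thm:RK}.

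First I would expand the pointwise integrand: writing $\zeta = \tfrac{dq}{d\gamma}$,
\begin{equation*}
\tfrac{1}{2}\bigl(|\zeta-x|^2 + |\zeta-y|^2\bigr) = |\zeta|^2 - \langle \zeta, x+y\rangle + \tfrac{1}{2}\bigl(|x|^2+|y|^2\bigr).
\end{equation*}
Integrating against $\gamma$, the last term contributes $\tfrac{1}{2}(m_2(\mu)+m_2(\nu))$ since $\pi_1\#\gamma=\mu$ and $\pi_2\#\gamma=\nu$. The cross terms $\iint\langle\zeta,x\rangle\,d\gamma$ and $\iint\langle\zeta,y\rangle\,d\gamma$ are exactly what the two martingale identities in \eqref{eq:moment-martingale} are built to handle: testing the first with $\Phi(x)=x$ gives $\iint\langle x,\zeta\rangle\,d\gamma = \iint|x|^2\,d\gamma = m_2(\mu)$, and testing the second with $\Psi(y)=y$ gives $\iint\langle y,\zeta\rangle\,d\gamma = m_2(\nu)$. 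Substituting, the integrand collapses to $|\zeta|^2 - \tfrac{1}{2}(m_2(\mu)+m_2(\nu))$, as desired.

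The one genuine subtlety, which is where I would be careful, is that $\Phi(x)=x$ and $\Psi(y)=y$ are unbounded, so the extended version of \eqref{eq:moment-martingale} recalled in Section \ref{sec:notation} must be invoked. This requires $\mu,\nu$ to have finite second moments (given, since $\mu,\nu\in\mathcal{P}^\ba_2(\Rd)$) and $\zeta\in L^2_\gamma(X\times Y;\Rd)$. The latter is guaranteed if we restrict the infimum to pairs of finite cost, for which $\iint |\zeta|^2\,d\gamma<+\infty$; for pairs with infinite cost in \eqref{eq:C_gammaq}, the expansion still yields infinity on the right-hand side of the new identity (via Cauchy--Schwarz the cross terms are absolutely integrable whenever $|\zeta|^2$ is, so the discrepancy in the other direction is the only possibility), so restricting to finite-cost pairs loses no generality for either minimization problem. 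A Cauchy--Schwarz estimate $\bigl|\iint\langle x,\zeta\rangle\,d\gamma\bigr|\leq m_2(\mu)^{1/2}\bigl(\iint|\zeta|^2\,d\gamma\bigr)^{1/2}$ makes the use of the martingale conditions rigorous.

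Combining the two steps, for every $(\gamma,q)=(\gamma,\zeta\gamma)\in\Gamma Q(\mu,\nu)$ with finite cost,
\begin{equation*}
\iint\tfrac{1}{2}\bigl(|\zeta-x|^2+|\zeta-y|^2\bigr)\,d\gamma \;=\; \iint|\zeta|^2\,d\gamma \;-\; \tfrac{1}{2}\bigl(m_2(\mu)+m_2(\nu)\bigr).
\end{equation*}
Taking the infimum on both sides and appealing to \eqref{eq:C_gammaq} followed by Theorem \ref{thm:RK}, the infimum of the left-hand side equals $\mathcal{C}(\mu,\nu)-\tfrac{1}{2}(m_2(\mu)+m_2(\nu)) = Z_2(\mu,\nu)$. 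Since the two functionals differ by a constant, they share the same argmin, and the minimum is attained precisely because it is attained in \eqref{eq:C_gammaq} by Corollary \ref{cor:existence_dominant} (or Corollary \ref{cor:existence}).
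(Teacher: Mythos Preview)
Your proof is correct and follows essentially the same route as the paper: both expand the squared distances, use the constraints on $(\gamma,q)\in\Gamma Q(\mu,\nu)$ to show the cross term $\iint\langle\zeta,x\rangle\,d\gamma$ equals $m_2(\mu)$ (the paper phrases this via the marginal condition $\pi_1\# q = x\mu$ rather than the equivalent martingale identity you invoke), and conclude via Theorem~\ref{thm:RK}. Your explicit attention to the integrability needed for unbounded test functions is a nice addition that the paper leaves implicit.
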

\begin{remark}
	Note that this result could  be recovered directly from \cite[Theorem 1.1]{bolbotowski2024kantorovich} by translating the 3-plan framework used therein to the one of $(\mathrm{M^2 OT})$ via Proposition \ref{prop:gammaq-omega}.
\end{remark}
\begin{proof}
	Using the fact that $\pi_1 \#\gamma = \mu$ and $\pi_1 \#q = x\mu$, we obtain,
	\begin{align}
		\nonumber
		\iint \frac{1}{2}\Big|\frac{dq}{d\gamma} - x \Big|^2 d\gamma  &=\iint  \frac{1}{2}\Big|\frac{dq}{d\gamma}(x,y)  \Big|^2 \gamma(dxdy)  -\iint \pairing{x,q(dxdy)} + \iint \frac{1}{2} \abs{x}^2 \gamma(dxdy)   \\
		\label{eq:mix_term}
		& = \iint  \frac{1}{2}\Big|\frac{dq}{d\gamma}(x,y)  \Big|^2 \gamma(dxdy) - \frac{m_2(\mu)}{2}.
	\end{align}
	Similarly, $	\iint \frac{1}{2}|\frac{dq}{d\gamma} - y |^2 d\gamma =  \iint  \frac{1}{2}|\frac{dq}{d\gamma}  |^2 d\gamma - \frac{m_2(\nu)}{2}$.  We thus see that the cost functional in the asserted equality coincides with the one in \eqref{eq:C_gammaq} up to the additive constant $\frac{1}{2} \big( m_2(\mu) + m_2(\nu) \big)$.  Readily, the equality follows by Theorem \ref{thm:RK}.
\end{proof}

%

\subsection{Characterizing optimal  bi-martingale plans in the quadratic case}
 We will characterize the set of plans $(\gamma,q)$ which are minimal for the quadratic case, that is $(\gamma,q)$ belonging to the set,
\begin{align}
		\label{eq:optGammaQ}
		\ov{\Gamma Q} (\mu,\nu)  :=& \argmin_{(\gamma,q) \in \Gamma Q(\mu,\nu)}   \int_X \! \int_Y  \Big|\frac{d {q}}{d\gamma} \Big|^2 d\gamma  \\
		\nonumber
		=&  \argmin_{(\gamma,q) \in \Gamma Q(\mu,\nu)}  \int_X \! \int_Y \frac{1}{2} \bigg( \Big| \frac{dq}{d\gamma}(x,y) - x \Big|^2 + \Big| \frac{dq}{d\gamma}(x,y) - y \Big|^2  \bigg)   \gamma(dxdy),
\end{align}
where the equality of the two sets of minimizers is due to Corollary \ref{cor:RK_duality}.
In the sequel the overbar $\ov{\argu}$ will stand for optimality in the context of the quadratic case.

To any  $\ov{u} \in C^{1,1}(\Rd)$ solving the Zolotarev problem \eqref{eq:Z2_again} for a pair $\mu,\nu \in \mathcal{P}_2^\ba(\Rd)$, we assign the Lipschitz continuous coupling map,
\begin{equation}
	\label{eq:ovzeta_again}
	\ov\zeta(x,y) := \frac{x+y}{2} + \frac{\nabla \ov{u}(x) - \nabla \ov{u}(y)}{2},
\end{equation}
and then the set of bi-martingale plans with respect to to that map (cf. \eqref{eq:ovGamma} for an equivalent definition),
\begin{equation}
	\label{eq:ovGamma_again}
	\ov\Gamma(\mu,\nu) :=\Big\{ \gamma \in \Gamma(\mu,\nu) \, : \, \ov\zeta \gamma \in Q(\mu,\nu) \Big\}.
\end{equation}
The following result proves the equivalence (i) $\Leftrightarrow$ (ii) in Theorem \ref{thm:RK_reinvented}.


\begin{proposition}
	\label{prop:ovGammaQ_char}
	The set of optimal bi-martingale plans in the quadratic case equals,
	\begin{equation}
		\label{eq:ovGammaQ_char}
		\ov{\Gamma Q}(\mu,\nu) = \Big\{ (\gamma,q) \,  : \,  \gamma \in \ov\Gamma(\mu,\nu),  \ \  q = \ov{\zeta} \gamma  \Big\},
	\end{equation}
	where $\ov\zeta$ and $\ov{\Gamma}(\mu,\nu)$ can be defined for any solution $\ov{u}$ of the Zolotarev problem \eqref{eq:Z2_again}. In particular, the set  $\ov{\Gamma}(\mu,\nu)$ is non-empty, and it does not depend on the choice of the possibly non-unique maximizer~$\ov{u}$.
	
	Moreover, for any $\gamma \in \ov{\Gamma}(\mu,\nu)$, 
	\begin{align}
		\label{eq:Z2_ovzeta}
		Z_2(\mu,\nu) &= \int_X \! \int_Y \frac{1}{2} \Big( \big| \ov{\zeta}(x,y) - x \big|^2 + \big| \ov{\zeta}(x,y) - y \big|^2  \Big)   \gamma(dxdy), \\
		\label{eq:Cmunu_ovzeta}
		\mathcal{C}(\mu,\nu) &= \int_X \! \int_Y   \big| \ov{\zeta}(x,y) \big|^2 \gamma(dxdy).
	\end{align}
\end{proposition}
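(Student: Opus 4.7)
The backbone of the argument is the chain of inequalities linking the bi-martingale cost to the Zolotarev value $Z_2$ through the 3-point inequality \eqref{eq:3-point_ineq}. For any pair $(\gamma,q) = (\gamma,\zeta\gamma) \in \Gamma Q(\mu,\nu)$ and any optimal $\ov u$ for \eqref{eq:Z2_again}, I test the bi-martingale identities \eqref{eq:moment-martingale} against the linearly growing functions $\nabla\ov u(x)$ and $\nabla\ov u(y)$ to obtain
\begin{equation*}
Z_2(\mu,\nu) \;=\; \int[\ov u(x) - \ov u(y)]\,d\gamma \;=\; \int\!\big[\ov u(x) - \ov u(y) + \langle\nabla\ov u(x), \zeta-x\rangle - \langle\nabla\ov u(y), \zeta-y\rangle\big]d\gamma \;\leq\; \int \tfrac{1}{2}\big(|\zeta-x|^2 + |\zeta-y|^2\big)\,d\gamma.
\end{equation*}
Letting $F(x,y,z)$ denote the pointwise integrand of the final slack, completion of the square in the quadratic variable $z$ yields the decomposition $F(x,y,z) = |z - \ov\zeta(x,y)|^2 + F_{\min}(x,y)$ with $F_{\min}(x,y) := F(x,y,\ov\zeta(x,y)) \geq 0$. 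If $(\gamma,q) \in \ov{\Gamma Q}(\mu,\nu)$, then by Corollary \ref{cor:RK_duality} the chain is tight, so $\int F(x,y,\zeta(x,y))\,d\gamma = 0$, forcing both $\zeta = \ov\zeta$ $\gamma$-a.e.\ (so $q = \ov\zeta\gamma$) and $F_{\min} = 0$ $\gamma$-a.e. This establishes the forward inclusion in \eqref{eq:ovGammaQ_char}; non-emptiness of $\ov{\Gamma Q}$ follows from Corollary \ref{cor:existence}, after which non-emptiness of $\ov\Gamma$ is automatic.

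For the reverse inclusion the key device is the optimal dual pair of convex potentials for the $\mathcal{C}$-problem: I would set $\phi(x) := \tfrac{1}{2}|x|^2 + \ov u(x)$ and $\psi(y) := \tfrac{1}{2}|y|^2 - \ov u(y)$, both convex by \eqref{eq:u_adm_char} and satisfying the tight pointwise identity $\phi(z) + \psi(z) = |z|^2$ for all $z \in \Rd$, alongside the clean representation $\ov\zeta(x,y) = \tfrac{1}{2}(\nabla\phi(x) + \nabla\psi(y))$. Theorem \ref{thm:RK} yields $\int\phi\,d\mu + \int\psi\,d\nu = \tfrac{1}{2}(m_2(\mu) + m_2(\nu)) + Z_2 = \mathcal{C}(\mu,\nu)$, identifying $(\phi,\psi)$ as a dual-optimal pair. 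Summing the two convexity inequalities $\phi(z) \geq \phi(x) + \langle\nabla\phi(x), z-x\rangle$ and $\psi(z) \geq \psi(y) + \langle\nabla\psi(y), z-y\rangle$ evaluated at $z = \ov\zeta(x,y)$, integrating against any $\gamma \in \ov\Gamma(\mu,\nu)$, and invoking the two bi-martingale conditions in \eqref{eq:ovGamma_again} to annihilate the two linear-in-$\ov\zeta$ terms, one arrives at
\begin{equation*}
\int|\ov\zeta|^2\,d\gamma \;\geq\; \int\phi\,d\mu + \int\psi\,d\nu \;=\; \mathcal{C}(\mu,\nu).
\end{equation*}

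The backward inclusion is now equivalent to the matching upper bound $\int|\ov\zeta|^2\,d\gamma \leq \mathcal{C}(\mu,\nu)$ for every $\gamma \in \ov\Gamma(\mu,\nu)$, equivalently $\int F_{\min}\,d\gamma = 0$. This is the principal technical obstacle: the forward analysis guarantees $F_{\min}$ vanishes $\gamma^*$-a.e.\ for \emph{some} particular optimal $\gamma^* \in \ov\Gamma$, but the vanishing must be propagated to $\mathrm{supp}(\gamma)$ for every $\gamma \in \ov\Gamma$. My strategy is to exploit complementary slackness for $(\phi,\psi)$ via Strassen's representation: the marginal $\rho := \ov\zeta\#\gamma$ is a convex dominant of both $\mu$ and $\nu$ (as recalled in Section \ref{sec:conv_dom}), so the two inequalities $\int\phi\,d\rho \geq \int\phi\,d\mu$ and $\int\psi\,d\rho \geq \int\psi\,d\nu$ sum precisely to $\int|\ov\zeta|^2\,d\gamma - \mathcal{C}(\mu,\nu)$. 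Building on the structural analysis of the Zolotarev--Kantorovich duality in \cite{bolbotowski2024kantorovich}, the affinity regions of $\phi$ (respectively $\psi$) cover the images $\ov\zeta(\{x\}\times\mathrm{supp}(\gamma_x))$ (respectively $\ov\zeta(\mathrm{supp}(\gamma_y)\times\{y\})$) for any $\gamma \in \ov\Gamma$, which forces both Strassen--Jensen inequalities to be tight and delivers $\int F_{\min}\,d\gamma = 0$.

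With the backward inclusion in hand, independence of $\ov\Gamma(\mu,\nu)$ from the choice of $\ov u$ follows routinely: for two optimal potentials $\ov u_1, \ov u_2$ the forward inclusion applied to any optimal $(\gamma,q)$ gives $\ov\zeta_1 = \ov\zeta_2$ $\gamma$-a.e., and the backward inclusion then transports membership between the two candidate definitions of $\ov\Gamma$. Identity \eqref{eq:Z2_ovzeta} for $\gamma \in \ov\Gamma(\mu,\nu)$ is the collapsed chain from the first paragraph, and \eqref{eq:Cmunu_ovzeta} follows via the elementary identity $\int|\ov\zeta|^2\,d\gamma = \int\tfrac{1}{2}(|\ov\zeta-x|^2 + |\ov\zeta-y|^2)\,d\gamma + \tfrac{1}{2}(m_2(\mu) + m_2(\nu))$ (the cross terms cancel by the bi-martingale conditions) combined with Theorem \ref{thm:RK}.
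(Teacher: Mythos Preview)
Your forward inclusion ($\subset$) is correct and coincides with the paper's argument: the 3-point inequality \eqref{eq:3-point_ineq} plus the bi-martingale identities give the chain, and optimality forces $\zeta=\ov\zeta$ $\gamma$-a.e.

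The backward inclusion ($\supset$), however, has a genuine gap. You correctly reduce it to showing $\int F_{\min}\,d\gamma=0$, and you correctly observe that this equals the sum of the two Jensen defects $\int\phi\,d\rho-\int\phi\,d\mu$ and $\int\psi\,d\rho-\int\psi\,d\nu$. But the step where you claim these vanish --- that ``the affinity regions of $\phi$ (respectively $\psi$) cover the images $\ov\zeta(\{x\}\times\mathrm{supp}(\gamma_x))$ \ldots\ for any $\gamma\in\ov\Gamma$'' --- is neither proved nor supplied by any specific result in \cite{bolbotowski2024kantorovich}. That structural statement is essentially equivalent to what you are trying to prove, so invoking it is circular. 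The pointwise slack in your convexity inequalities is generically nonzero (e.g.\ for $\ov u\equiv 0$ it equals $\tfrac14|x-y|^2$), so tightness genuinely relies on the bi-martingale structure of $\gamma$ in a way that your argument does not capture.

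The paper avoids this obstacle by an elementary algebraic trick you are missing: split $\ov\zeta=\tfrac12(x+\nabla\ov u(x))+\tfrac12(y-\nabla\ov u(y))$ and write $|\ov\zeta|^2=\tfrac12\langle x+\nabla\ov u(x),\ov\zeta\rangle+\tfrac12\langle y-\nabla\ov u(y),\ov\zeta\rangle$. Since the first bracket depends on $x$ only and the second on $y$ only, the marginal conditions $\pi_1\#(\ov\zeta\gamma)=x\mu$, $\pi_2\#(\ov\zeta\gamma)=y\nu$ (which hold for every $\gamma\in\ov\Gamma(\mu,\nu)$ by definition) give directly
\[
\iint|\ov\zeta|^2\,d\gamma=\tfrac12\int\langle x+\nabla\ov u(x),x\rangle\,\mu(dx)+\tfrac12\int\langle y-\nabla\ov u(y),y\rangle\,\nu(dy),
\]
a quantity that is \emph{independent of} $\gamma\in\ov\Gamma(\mu,\nu)$. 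Since the forward inclusion already furnishes one particular $\ov\gamma\in\ov\Gamma$ for which this value is $\mathcal{C}(\mu,\nu)$, every $\gamma\in\ov\Gamma$ achieves it too. This closes the backward inclusion in two lines, with no need for affinity-region analysis.
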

\begin{proof}
	 First we prove the inclusion $\subset$. By the admissibility in the Zolotarev problem, we have $\mathrm{lip}(\nabla \ov{u}) \leq 1$. According to \eqref{eq:3-point_ineq}, it means that for all $(x,y,z) \in(\Rd)^3$,
	\begin{align*}
		\ov{u}(x) - \ov{u}(y) + \pairing{\nabla \ov{u}(x),z-x} -  \pairing{\nabla \ov{u}(y),z-y} \leq \frac{1}{2} \Big( \abs{z-x}^2 + \abs{z-y}^2\Big).
	\end{align*}
	One checks easily that  $z = \ov\zeta(x,y)$ is the unique minimizer of the gap in this inequality. In particular, if $z \neq  \ov\zeta(x,y)$, then the inequality must be strict. Take a pair $(\gamma,q) = (\gamma,\zeta\gamma) \in \Gamma Q(\mu,\nu)$. Testing \eqref{eq:moment-martingale} with $\varphi = \ov{u} = -\psi$ and $\Phi = \nabla \ov{u} = -\Psi$ (which is valid since $\ov{u}$  and $\nabla\ov{u}$ are of at most quadratic and linear growth, respectively, while  $\mu,\nu \in \mathcal{P}_2(\Rd)$ and $\iint \abs{\zeta}^2 d\gamma < + \infty$), we get,
	\begin{align*}
		\int \ov{u} \,d(\mu-\nu) &= \iint \Big(	\ov{u}(x) - \ov{u}(y) + \pairing{\nabla \ov{u}(x),\zeta(x,y) -x} -  \pairing{\nabla \ov{u}(y),\zeta(x,y)-y} \Big) \, \gamma(dxdy) \\
		& \leq \iint \frac{1}{2} \Big( \abs{\zeta(x,y)-x}^2 + \abs{\zeta(x,y)-y}^2\Big) \,\gamma(dxdy).
	\end{align*}
	The left hand side is equal to $Z_2(\mu,\nu)$ by maximality of $\ov{u}$. Thus, owing to Corollary \ref{cor:RK_duality},  the optimality $(\gamma,\zeta\gamma) \in \ov{\Gamma Q}(\mu,\nu)$ implies that the inequality above is an equality. This is possible only if $\zeta(x,y) = \ov\zeta(x,y)$ for $\gamma$-a.e. $(x,y)$, which proves the inclusion $\subset$.

	Conversely, take any $\gamma \in\ov{\Gamma}(\mu,\nu)$ and define $q = \ov{\zeta} \gamma$. By definition of the set  $\ov{\Gamma}(\mu,\nu)$, we have $\gamma \in \Gamma(\mu,\nu)$ and $q \in Q(\mu,\nu)$. Recalling the definition of $\ov{\zeta}$, we compute,
	\begin{align}
		\nonumber
		\iint     \big| \ov{\zeta} \big|^2 d\gamma &=  \frac{1}{2}	\iint     \big\langle x + \nabla \ov{u}(x),  \ov{\zeta}(x,y)  \big\rangle \,  \gamma(dxdy)  +  \frac{1}{2}	\iint     \big\langle y - \nabla \ov{u}(y), \ov{\zeta}(x,y)\big \rangle  \, \gamma(dxdy) \\
		\nonumber
		& =   \frac{1}{2}	\iint     \big\langle x + \nabla \ov{u}(x), q(dxdy) \big\rangle   +  \frac{1}{2}	\iint     \big\langle y - \nabla \ov{u}(y),q(dxdy)\big \rangle \\
		\label{eq:chain_eq}
		& =   \frac{1}{2}	\int     \big\langle x + \nabla \ov{u}(x), x \big\rangle \, \mu(dx)   +  \frac{1}{2}	\int     \big\langle y - \nabla \ov{u}(y),y\big \rangle \, \nu(dy),
	\end{align}
	where the last equality is owing to $q \in Q(\mu,\nu)$. We have thus showed that the value of the integral $	\iint     \big| \ov{\zeta}(x,y) \big|^2 \gamma(dxdy) $ is independent of the choice of $\gamma \in\ov{\Gamma}(\mu,\nu)$. Take now a pair $(\ov\gamma,\ov{q}) \in \ov{\Gamma Q}(\mu,\nu)$, which always exists. By the already proven inclusion $\subset$, there must hold $\ov{q} = \ov\zeta\ov\gamma$, while $\ov{\gamma}$ must be an element of $\ov{\Gamma}(\mu,\nu)$. Altogether, we obtain,
	\begin{equation*}
		 \iint  \Big|\frac{d {q}}{d\gamma} \Big|^2 d\gamma = \iint     \big| \ov{\zeta} \big|^2 d\gamma = \iint     \big| \ov{\zeta} \big|^2 d\ov\gamma = \iint  \Big|\frac{d \ov{q}}{d\ov\gamma} \Big|^2 d\ov\gamma = \mathcal{C}(\mu,\nu),
	\end{equation*} 
	where the final equality is by optimality of $(\ov{\gamma},\ov{q})$. This renders $(\gamma,q)$ an element of $\ov{\Gamma Q}(\mu,\nu)$ as well, and the inclusion $\supset$ is established. The independence of $\ov\Gamma(\mu,\nu)$ with respect to the choice of the optimal potential $\ov{u}$ now follows by the independence of $\ov{\Gamma Q}(\mu,\nu)$. 
\end{proof}


\begin{remark}
	Unlike for the set $\ov{\Gamma}(\mu,\nu)$, the possible non-uniqueness of the optimal potential $\ov{u}$ can induce the non-uniqueness of the coupling map $\ov\zeta$, e.g. far away from the set $\spt(\mu) \otimes \spt(\nu)$. However, the characterization \eqref{eq:ovGammaQ_char} implies that $\ov\zeta$ is uniquely determined on the set $\bigcup_{ \gamma\in  \ov{\Gamma}(\mu,\nu)} \spt (\gamma)$.
\end{remark}


In the following, we shall show that the set $\ov{\Gamma}(\mu,\nu)$ is exactly the set of martingale plans $\Gamma_{\mathrm{M}}(\mu,\nu)$ once we have the convex order $\mu \preceq_c \nu$. This characterization will rely  on the fact that $\ov{u} = -\frac{1}{2}\abs{\argu}^2$ solves the Zolotarev problem in this case, which gives $\ov\zeta(x,y) = y= \pi_2(x,y)$.

\begin{proposition}
	\label{prop:cvx_order_char}
	For a pair  $\mu, \nu \in \mathcal{P}^\ba_2(\Rd)$, the following conditions are equivalent: 
	\begin{enumerate}[label={(\roman*)}]
		\item the convex order relation $\mu \preceq_c \nu$ holds true;
		\item $\ov{u}(x) = -\frac{1}{2} \abs{x}^2 $ solves the Zolotarev problem \eqref{eq:Z2_again}  or, equivalently,
		\begin{equation}
			\label{eq:Z2_ordered}
			Z_2(\mu,\nu) = \frac{m_2(\nu) - m_2(\mu)}{2};
		\end{equation}
		\item $\ov{\rho} = \nu$ is the (unique) solution of the minimal convex dominance problem \eqref{eq:C_rho} or, equivalently,
			\begin{equation*}
				\mathcal{C}(\mu,\nu) = m_2(\nu);
			\end{equation*}
		\item the set of optimal bi-martingale plans equals $\ov{\Gamma}(\mu,\nu) = \Gamma_{\mathrm{M}}(\mu,\nu)$ or, equivalently,
		\begin{equation}
			\label{eq:ovGammaQ_char_order}
			\ov{\Gamma Q} (\mu,\nu)  = \Big\{  (\gamma,q) \, : \, \gamma \in \Gamma_{\mathrm{M}}(\mu,\nu), \ q = y \gamma  \Big\}.
		\end{equation}
	\end{enumerate}
\end{proposition}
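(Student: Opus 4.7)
My plan is to chain the four conditions via the Kantorovich--Rubinstein identity \eqref{eq:CZ}, the rigidity principle \eqref{eq:strict}, and the characterization in Proposition \ref{prop:ovGammaQ_char}, establishing (ii) $\Leftrightarrow$ (iii), then (i) $\Leftrightarrow$ (iii), and finally (i) $\Leftrightarrow$ (iv).

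First, (ii) $\Leftrightarrow$ (iii) is a direct consequence of Theorem \ref{thm:RK}: the identity $Z_2(\mu,\nu) = \mathcal{C}(\mu,\nu) - (m_2(\mu)+m_2(\nu))/2$ immediately renders the two stated numerical equalities equivalent. The function $u(x) = -\tfrac{1}{2}\abs{x}^2$ is always admissible in \eqref{eq:Z2_again} since $\tfrac{1}{2}\abs{x}^2 \pm u$ are both convex, cf. \eqref{eq:u_adm_char}; moreover it realizes the value $\int u\,d(\mu-\nu) = (m_2(\nu)-m_2(\mu))/2$. Hence this potential solves the Zolotarev problem if and only if this numerical value matches $Z_2(\mu,\nu)$.

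Next, I would prove (i) $\Leftrightarrow$ (iii). Under (i), the measure $\rho = \nu$ is admissible in \eqref{eq:C_rho} (it dominates $\mu$ by assumption and itself trivially); every other competitor satisfies $m_2(\rho) \geq m_2(\nu)$ since $\rho \succeq_c \nu$ and $z \mapsto \abs{z}^2$ is convex, so $\nu$ is optimal. Uniqueness follows from the rigidity \eqref{eq:strict} applied to the strictly convex integrand $\abs{z}^2$: any competitor $\rho$ with $\rho \succeq_c \nu$ and $m_2(\rho) = m_2(\nu)$ must coincide with $\nu$. Conversely, assuming $\mathcal{C}(\mu,\nu) = m_2(\nu)$, Corollary \ref{cor:existence_dominant} supplies a minimizer $\rho^*$ with $m_2(\rho^*) = m_2(\nu)$ and $\rho^* \succeq_c \nu$; by \eqref{eq:strict} we conclude $\rho^* = \nu$, so $\nu$ itself is admissible, which is nothing but (i).

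Finally, for (i) $\Leftrightarrow$ (iv), under (i) condition (ii) gives that $\ov{u}(x) = -\tfrac{1}{2}\abs{x}^2$ is an optimal Zolotarev potential; its gradient $\nabla \ov{u}(x) = -x$ substituted into \eqref{eq:ovzeta_again} yields $\ov\zeta(x,y) = y$. The two constraints in \eqref{eq:2xmartingale_condition} defining $\ov\Gamma(\mu,\nu)$ then reduce to the classical martingale condition $\int y\,\gamma_x(dy) = x$ and the tautology $y = y$, respectively, so $\ov\Gamma(\mu,\nu) = \Gamma_{\mathrm{M}}(\mu,\nu)$; the characterization \eqref{eq:ovGammaQ_char_order} follows by specializing \eqref{eq:ovGammaQ_char}. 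Conversely, Proposition \ref{prop:ovGammaQ_char} guarantees $\ov\Gamma(\mu,\nu) \neq \emptyset$ without any convex-order assumption, so (iv) forces $\Gamma_{\mathrm{M}}(\mu,\nu) \neq \emptyset$, and Strassen's theorem then delivers (i). The main (and only) subtle point in the whole argument is the rigidity step in (iii) $\Rightarrow$ (i), which relies on the strict convexity of $\abs{z}^2$ through \eqref{eq:strict}; everything else reduces to bookkeeping with identities already established.
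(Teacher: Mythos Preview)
Your proof is correct and follows essentially the same approach as the paper: both rely on the Kantorovich--Rubinstein identity \eqref{eq:CZ}, the rigidity principle \eqref{eq:strict} for the strictly convex integrand $\abs{z}^2$, Proposition \ref{prop:ovGammaQ_char}, and Strassen's theorem. The only difference is organizational --- the paper chains (i) $\Rightarrow$ (ii) $\Rightarrow$ (iii) $\Rightarrow$ (i) (proving (i) $\Rightarrow$ (ii) directly via the convexity of $u+\tfrac{1}{2}\abs{\argu}^2$), whereas you establish (ii) $\Leftrightarrow$ (iii) first and then (i) $\Leftrightarrow$ (iii) --- but the substance is identical.
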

\begin{proof}
	To prove the implication (i) $\Rightarrow$ (ii) assume the convex order $\mu \preceq_c \nu$.
	Take a competitor $u$ in the Zolotarev problem \eqref{eq:Z2_again}. Then, 
	$\varphi = u+  \frac{1}{2} \abs{\argu}^2$ is a convex function, see \eqref{eq:u_adm_char}, and so,
	\begin{align*}
		\int_\Rd u \, d(\mu-\nu) = \frac{m_2(\nu) - m_2(\mu)}{2} + \int \varphi\, d(\mu-\nu) \leq  \frac{m_2(\nu) - m_2(\mu)}{2}.
	\end{align*}
	By taking the supremum on the left hand side with respect to admissible functions $u$ we see that $Z_2(\mu,\nu) \leq  \frac{1}{2}\big(m_2(\nu) - m_2(\mu)\big)$. Clearly, the inequality above is saturated for $\varphi = 0$, which corresponds to $u = \ov{u} = -\frac{1}{2} \abs{\argu}^2$. This gives the condition (ii).
	
	Next, we show that (ii) $\Rightarrow$ (iii). Assume that $\ov\rho$ is a solution to \eqref{eq:C_rho}, in particular $\ov\rho \succeq_c \mu$  and $\ov\rho \succeq_c \nu$.  Theorem \ref{thm:RK} combined with \eqref{eq:Z2_ordered} says that $\mathcal{C}(\mu,\nu) = m_2(\nu)$. Thus, $\int \abs{\argu}^2 d\ov\rho = \mathcal{C}(\mu,\nu) = \int \abs{\argu}^2 d\nu$ and, since $\abs{\argu}^2$ is strictly convex, from \eqref{eq:strict} we deduce that $\ov\rho=\nu$. This necessary optimality condition turns sufficient if we take into account that \eqref{eq:C_rho} always has  a solution. We have thus arrived at  (iii). 
	
	
	Showing (iii) $\Rightarrow$ (i) is straightforward: every competitor $\rho$ in  \eqref{eq:C_rho} dominates $\mu$ in the convex order, and so this also applies to $\ov\rho = \nu$ in the case when (iii) holds true. 
	
	We have so far proved the equivalences (i) $\Leftrightarrow$ (ii) $\Leftrightarrow$ (iii). The implication (iv) $\Rightarrow$ (i) is an easy consequence of Strassen theorem. Indeed, since $\ov{\Gamma Q}(\mu,\nu)$ is non-empty, (iv) implies that there exists a martingale plan from $\mu$ to $\nu$. Finally, to show that (ii) $\Rightarrow$ (iv) we exploit Proposition \ref{prop:ovGammaQ_char}. If $\ov{u}(x) = -\frac{1}{2} \abs{x}^2 $  is the solution of \eqref{eq:Z2_again}, then the formula \eqref{eq:ovzeta_again} yields $\ov\zeta(x,y) = y$. From the definition \eqref{eq:ovGamma} of the set  $\ov\Gamma(\mu,\nu)$,  we see that the first condition, i.e. $\int \ov\zeta(x,y)\,\gamma_x(dy) = x$, is now the classical martingale condition $\gamma \in \Gamma_{\mathrm{M}}(\mu,\nu)$, whilst the second condition is void: $\int \ov\zeta(x,y)\,\gamma_y(dx) = \int y \,\gamma_y(dx) = y$ for any kernel $\gamma_y(dx)$. We have thus showed that $\ov\Gamma(\mu,\nu) = \Gamma_{\mathrm{M}}(\mu,\nu)$, and the proof is complete.
\end{proof}

\subsection{Zolotarev projection onto the cone of dominating probabilities}
\label{ssec:Zol_proj}

For any $\mu \in \mathcal{P}^\ba_2(\Rd)$ we define the \textit{Zolotarev projection}, being a set-valued operator
$\Pi_{\succeq_c \mu}:\mathcal{P}_2^\ba(\Rd) \to 2^{\mathcal{P}_2^\ba(\Rd)}$,
\begin{equation}
		\label{eq:Zol_projection}
		\Pi_{\succeq_c \mu} (\nu) :=  \argmin\limits_{\rho \succeq_c \mu} Z_2(\nu,\rho).
\end{equation}
The well-posedness of the Zolotarev projection can be shown directly, and here is the outline of how to do it. If $\rho_n$ is a minimizing sequence, then the estimate $2Z_2(\nu,\rho_n) \geq m_2(\rho_n) - m_2(\nu_n)$ yields tightness of the sequence $\{\abs{\argu} \rho_n\}$. This is enough to ensure the stability of $\rho_n \succeq_c \nu$ for the weak limit points $\ov\rho$, see  e.g. \cite[Lemma A.1]{alfonsi2020sampling}. The same tightness also allows to adapt the proofs of Section \ref{ssec:existence} to obtain the lower semi-continuity $\liminf_{n} Z_2(\nu,\rho_n) \geq  Z_2(\nu,\ov\rho)$, thus proving that $\ov\rho \in \Pi_{\succeq_c \mu} (\nu) $.

We can skip the details of this reasoning since, here, the non-emptiness of $\Pi_{\succeq_c \mu} (\nu)$ will emerge as the consequence of an unexpected feature of the Zolotarev projection: swapping the roles of $\mu$ and $\nu$ does not change the resulting set, and this set equals to the common convex dominants of minimal second moment\footnote{The choice for the notation  $\mu \curlyvee \nu$ is to acknowledge that on the real line this set is equal to $ \{\mu \vee\nu \}$. It must be emphasized that convex order in $d>1$ does not enjoy the structure of semilattice, hence $\mu \curlyvee \nu$ is not the least upper bound.},
\begin{equation*}
	\mu \curlyvee \nu := \argmin_{ \rho \succeq_c \mu, \  \rho \succeq_c \nu} m_2(\rho),
\end{equation*}
which is a non-empty set.
The main step towards connecting the Zoloterev projection and optimal quadratic convex dominance consists in the following observation,
\begin{equation}
	\label{eq:triangle_eq}
Z_2(\mu,\nu) = Z_2(\mu,\ov\rho) + Z_2(\ov\rho,\nu) \qquad \qquad  \forall\, \ov{\rho} \in  \mu \curlyvee \nu.
\end{equation}
To see it, we first exploit the dominances  $\ov\rho \succeq_c \mu$, $\ov\rho \succeq_c \nu$, which, by virtue of Proposition \ref{prop:cvx_order_char},  furnish the equalities
$Z_2(\mu,\ov\rho)  = \frac{1}{2} \big( m_2(\ov\rho) - m_2(\mu)\big)$ and $Z_2(\nu,\ov\rho)  = \frac{1}{2} \big( m_2(\ov\rho) - m_2(\nu)\big)$. By definition of the set $\mu \curlyvee\nu$ we have $m_2(\ov\rho) = \mathcal{C}(\mu,\nu)$, and so the desired identity follows from Theorem \ref{thm:RK}.

The following result completes the proof of Theorem \ref{thm:RK_reinvented}.


\begin{theorem}
	\label{thm:Zol_proj_again}
	For any  $\mu,\nu \in\mathcal{P}_2^\ba(\Rd)$ we have the equalities between the three sets,
	\begin{align*}
		\Pi_{\succeq_c \mu} (\nu)   = 	\Pi_{\succeq_c \nu} (\mu)  =  \mu \curlyvee \nu.
	\end{align*}
	In one dimension the projection set always consists of exactly one element being the least upper bound \eqref{eq:lub} for convex order:  $\Pi_{\succeq_c \mu} (\nu) = \{ \mu \vee \nu\}$. 
	In dimension $d>1$, $\Pi_{\succeq_c \mu} (\nu)$ is a non-empty convex set that may contain more than one element. 
	
	Moreover, for any $\ov\rho \in \mu \curlyvee \nu$ we have,
	\begin{align}
		\label{eq:min_dist}
		\min_{\rho \succeq_c \mu} Z_2(\nu,\rho) &= \frac{m_2(\ov\rho) - m_2(\nu)}{2} = \frac{Z_2(\mu,\nu) -\frac{1}{2}\big(m_2(\nu)-m_2(\mu)\big)}{2}, \\
		\label{eq:min_dist_2}
		\min_{\rho \succeq_c \nu} Z_2(\mu,\rho) &= \frac{m_2(\ov\rho) - m_2(\mu)}{2}  = \frac{Z_2(\mu,\nu) -\frac{1}{2}\big(m_2(\mu)-m_2(\nu)\big)}{2}.
	\end{align}
\end{theorem}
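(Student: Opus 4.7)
The strategy is to sandwich $Z_2(\nu,\rho)$ between two lower bounds, each tight in a complementary regime, so that together they force $\ov\rho \in \mu \curlyvee \nu$ to realise the Zolotarev projection. First, the admissible test functions $u=\pm \tfrac{1}{2}|\cdot|^2$ in the Zolotarev problem \eqref{eq:Z2_again} yield the universal bound
\begin{equation*}
Z_2(\rho_1,\rho_2) \geq \tfrac{1}{2}\bigl|m_2(\rho_1)-m_2(\rho_2)\bigr|
\end{equation*}
for any $\rho_1,\rho_2 \in \mathcal{P}_2^\ba(\Rd)$. Second, Theorem \ref{thm:RK} gives $Z_2(\nu,\rho) = \mathcal{C}(\nu,\rho) - \tfrac{1}{2}(m_2(\nu)+m_2(\rho))$, and whenever $\rho \succeq_c \mu$ any common convex dominant of $\nu$ and $\rho$ also dominates $\mu$, hence $\mathcal{C}(\nu,\rho) \geq \mathcal{C}(\mu,\nu)=m_2(\ov\rho)$ for every fixed $\ov\rho \in \mu \curlyvee \nu$. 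This yields the second lower bound $Z_2(\nu,\rho) \geq m_2(\ov\rho) - \tfrac{1}{2}(m_2(\nu)+m_2(\rho))$.

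With these two inequalities and the observation $Z_2(\nu,\ov\rho) = \tfrac{1}{2}(m_2(\ov\rho)-m_2(\nu))$ (from $\ov\rho \succeq_c \nu$ and Proposition \ref{prop:cvx_order_char}), one concludes $Z_2(\nu,\rho) \geq Z_2(\nu,\ov\rho)$ by splitting into two cases: if $m_2(\rho)\geq m_2(\ov\rho)$ the first bound suffices, whereas if $m_2(\rho)\leq m_2(\ov\rho)$ the second does. Hence $\mu \curlyvee \nu \subseteq \Pi_{\succeq_c \mu}(\nu)$, and the explicit values \eqref{eq:min_dist} follow after substituting the Kantorovich--Rubinstein identity $\mathcal{C}(\mu,\nu) = Z_2(\mu,\nu) + \tfrac{1}{2}(m_2(\mu)+m_2(\nu))$.

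For the reverse inclusion $\Pi_{\succeq_c \mu}(\nu) \subseteq \mu \curlyvee \nu$, take any projector $\hat\rho$. The two lower bounds above must both be equalities at $\hat\rho$, and reading off the equality in each direction yields $m_2(\hat\rho)\leq m_2(\ov\rho)$ and $m_2(\hat\rho)\geq m_2(\ov\rho)$, so $m_2(\hat\rho) = \mathcal{C}(\mu,\nu)$ together with $\mathcal{C}(\nu,\hat\rho)=m_2(\hat\rho)$. Any minimiser $\sigma^*$ in the definition of $\mathcal{C}(\nu,\hat\rho)$ then satisfies $\sigma^* \succeq_c \hat\rho$ with $m_2(\sigma^*)=m_2(\hat\rho)$; strict convexity of $|\cdot|^2$ and the dichotomy \eqref{eq:strict} force $\sigma^* = \hat\rho$, hence $\hat\rho \succeq_c \nu$, completing the argument that $\hat\rho\in\mu\curlyvee\nu$. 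The identity $\Pi_{\succeq_c \nu}(\mu)=\mu\curlyvee\nu$ together with \eqref{eq:min_dist_2} follows by symmetry of the set $\mu \curlyvee \nu$ in its arguments.

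Convexity of $\Pi_{\succeq_c \mu}(\nu)$ is routine: convex order and the second moment $m_2$ are both preserved by convex combinations of probabilities. The one-dimensional uniqueness $\Pi_{\succeq_c \mu}(\nu)=\{\mu \vee \nu\}$ is immediate from Proposition \ref{prop:1D} applied with the strictly convex cost $f=|\cdot|^2$, while the possible non-uniqueness in $d\geq 2$ is the content of the forthcoming Example \ref{ex:non-uniqueness}. The main obstacle is the two-sided case split in the first part: because $\rho \succeq_c \mu$ places no constraint on the ordering between $m_2(\rho)$ and $m_2(\ov\rho)$, neither the test-function bound nor the $\mathcal{C}$-monotonicity bound alone is strong enough, and one genuinely needs both simultaneously to pin down $Z_2(\nu,\ov\rho)$ as the minimum.
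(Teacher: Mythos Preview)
Your proof is correct and complete, but it follows a genuinely different route from the paper's. The paper introduces an auxiliary optimal dominant $\breve\rho \in \nu \curlyvee \rho$ and exploits the triangle \emph{equality} \eqref{eq:triangle_eq} for the triple $\nu,\rho,\breve\rho$, which yields in a single chain
\[
Z_2(\nu,\rho) = Z_2(\nu,\breve\rho) + Z_2(\rho,\breve\rho) \geq Z_2(\nu,\breve\rho) = \tfrac{1}{2}\bigl(m_2(\breve\rho)-m_2(\nu)\bigr) \geq \tfrac{1}{2}\bigl(m_2(\ov\rho)-m_2(\nu)\bigr),
\]
with no case split needed. For the reverse inclusion the paper reads off $Z_2(\rho,\breve\rho)=0$ (hence $\rho=\breve\rho$) and $m_2(\breve\rho)=m_2(\ov\rho)$ directly from equality in this chain. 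Your argument instead combines two independent lower bounds --- the test-function bound $Z_2(\nu,\rho)\geq \tfrac12|m_2(\rho)-m_2(\nu)|$ and the $\mathcal{C}$-monotonicity bound from Theorem~\ref{thm:RK} --- via a dichotomy on the sign of $m_2(\rho)-m_2(\ov\rho)$; your reverse inclusion then recovers $\hat\rho\succeq_c\nu$ through the strict-convexity device \eqref{eq:strict} applied to an optimal $\sigma^*$ for $\mathcal{C}(\nu,\hat\rho)$. Both approaches ultimately rest on the same transitivity observation $\mathcal{C}(\nu,\rho)\geq\mathcal{C}(\mu,\nu)$ when $\rho\succeq_c\mu$, but the paper's packaging via the triangle equality avoids the case split and makes the equality case more transparent, while your argument is more elementary in that it never invokes \eqref{eq:triangle_eq} as a separate ingredient.
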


\begin{remark}
	The equalities \eqref{eq:proj_alpha} announced in the introduction, which involve the convex order index
	$\alpha_{\succeq_c}(\nu\,|\,\mu) =  \frac{m_2(\nu)-m_2(\mu)}{2 Z_2(\mu,\nu)}$, are now a simple corollary from the equalities \eqref{eq:min_dist}, \eqref{eq:min_dist_2}.
\end{remark}

\begin{proof}
	We will show the equalities $\Pi_{\succeq_c \mu} (\nu)  = \mu \curlyvee \nu$ and  \eqref{eq:min_dist}, and the other two will follow by symmetric arguments. The non-emptiness and convexity of $\Pi_{\succeq_c \mu} (\nu)$, as well as its multiplicity in many dimensions, will be then a consequence of Corollary \ref{cor:existence_dominant}, whilst the equality  $\Pi_{\succeq_c \mu} (\nu)   = \{ \mu \vee \nu\}$ in 1D will follow from Proposition \ref{prop:1D}.
	
	Let us choose a measure $\ov\rho \in \mu \curlyvee \nu$, whilst by $\rho $ denote any probability in $\mathcal{P}_2(\Rd)$ such that $\rho \succeq_c \mu$, i.e. $\rho$ is a competitor in \eqref{eq:Zol_projection}. Next, choose any optimal dominant $\breve \rho$ for the pair $\nu$,~$\rho$, that is $\breve \rho \in \nu \curlyvee \rho$. 
	By transitivity, we have,
	\begin{equation}
		\label{eq:transitivity}
		\breve{\rho} \succeq_c \nu, \qquad \quad \breve{\rho} \succeq_c {\rho} \succeq_c \mu.
	\end{equation}
	This renders $\breve \rho$ a competitor for the minimal common convex dominant for the pair $\mu$, $\nu$ as well. Since $\ov\rho$ was an optimal one, we must have $m_2(\breve\rho) \geq m_2(\ov\rho)$. 
	Kicking off with the identity \eqref{eq:triangle_eq} written for the pair $\nu, \rho$ and its optimal dominant $\breve\rho$, we are led to,
	\begin{align}
		\label{eq:chain_proj}
		Z_2(\nu,{\rho}) = Z_2(\nu,\breve\rho) + Z_2(\rho,\breve{\rho}) \geq Z_2(\nu,\breve{\rho}) = \tfrac{m_2(\breve{\rho}) - m_2(\nu)}{2} \geq  \tfrac{m_2(\ov{\rho}) - m_2(\nu)}{2} = Z_2(\nu,\ov{\rho}),
	\end{align}
	where we made use of the formula \eqref{eq:Z2_ordered} for the Zolotarev distance between ordered measures: first for $\nu \preceq_c \breve\rho$, and then for $\nu \preceq_c \ov\rho$. Since $\rho$ was an arbitrary competitor in the projection problem \eqref{eq:Zol_projection}, the above chain of inequalities proves that $\ov{\rho} \succeq_c \mu$ is a minimizer, i.e. $ \ov\rho \in \Pi_{\succeq_c \mu}(\nu)$.
	
	The above establishes the inclusion $\mu \curlyvee \nu \subset \Pi_{\succeq_c \mu}(\nu)$ and the first equality in \eqref{eq:min_dist}, whilst the second follows from Theorem \ref{thm:RK} and the fact that $m_2(\ov\rho) = \mathcal{C}(\mu,\nu)$.
	To get the converse inclusion, assume that $\rho \in \Pi_{\succeq_c \mu}(\nu)$. This is equivalent to saying that the two inequalities in \eqref{eq:chain_proj} are equalities. The second of those equalities implies that $m_2(\breve\rho) = m_2(\ov\rho)$, which means that $\breve\rho \in \mu \curlyvee \nu$ thanks to \eqref{eq:transitivity}. The first equality requires that  $Z_2(\rho,\breve{\rho}) = 0$, and so that $\rho = \breve\rho$. This furnishes the inclusion  $ \Pi_{\succeq_c \mu}(\nu) \subset \mu \curlyvee \nu$ and completes the proof.
\end{proof}

\section{Approximation of  the martingale optimal transport}
\label{sec:MOT}

This section focuses on employing the framework of bi-martingale optimal transport $(\mathrm{M^2 OT})$ to finding approximate solutions of the classical two-period martingale optimal transport problem \cite{beiglbock2013},
\begin{align*}
	\qquad\qquad\quad \inf\left\{ \iint_{(\Rd)^2}  c (x,y)\, \gamma(dxdy) \, : \, 	\gamma \in \Gamma_{\mathrm{M}}(\mu,\nu)   \right\}. \qquad\quad (\mathrm{MOT})
\end{align*}
In the whole section, $\mu$ and $\nu$ are elements of $\mathcal{P}^\ba_2(\Rd)$, i.e. they have finite second moments and a fixed barycentre $\ba \in \Rd$. \textit{A priori}, we do not impose that the two probabilities are in convex order. In addition, we  assume sequences  $\{\mu_n \}, \{\nu_n\} \subset \mathcal{P}^\ba_2(\Rd)$ that converge to, respectively, $\mu,\nu$ in the Zolotarev  metric  $Z_2$ (or equivalently in the Wasserstein $W_2$ metric).  For a continuous cost function $c:(\Rd)^2 \to \R$, we consider the sequence of bi-martingale optimal transport problems,
\begin{align*}
	\inf\left\{ \iint_{(\Rd)^2} \bigg( c (x,y) + \frac{1}{2\eps_n} \Big(\Big|\frac{dq}{d\gamma}(x,y)\Big|^2 - \mathcal{C}(\mu_n,\nu_n) \Big) \bigg) \gamma(dxdy) \, : \, 		(\gamma,q) \in \Gamma Q(\mu_n,\nu_n)    \right\} \\ (\mathrm{M^2 OT}_n)
\end{align*}
where $\eps_n$ is a sequence of positive numbers. In what follows, we show that $(\mathrm{M^2 OT}_n)$ $\Gamma$-converges provided that $\eps_n$ tends to zero at an adequate pace. In the case when $\mu \preceq_c \nu$, the $\Gamma$-limit problem becomes $(\mathrm{MOT})$ exactly. Otherwise, an interpretation involving  the Zolotarev projection is available.

\subsection{The $\Gamma$-convergence result}

Let $\Mes((\Rd)^2; \R \times \Rd)$ be the space of vector-valued Borel measures on $\Rd \times \Rd$ equipped with the topology of weak convergence. In order to facilitate a smooth $\Gamma$-convergence analysis, we convert $(\mathrm{M^2 OT}_n)$ to the unconstrained problem on that space, with the objective functional $F_n:\Mes((\Rd)^2; \R \times \Rd) \to \R \cup \{+\infty\}$ as follows, 
\begin{align*}
	F_n(\gamma,q) &:= \iint_{(\Rd)^2} \bigg( c  (x,y) + \frac{1}{2\eps_n} \Big(\Big|\frac{dq}{d\gamma}(x,y)\Big|^2 - \mathcal{C}(\mu_n,\nu_n) \Big) \bigg) \gamma(dxdy) + \chi_{ \Gamma Q(\mu_n,\nu_n)}(\gamma,q).
\end{align*}
The $\Gamma$-limit will be proven to equal to,
\begin{equation*}
	F(\gamma,q):= \iint_{(\Rd)^2}  c  (x,y) \,\gamma(dxdy) +  \chi_{\ov{\Gamma Q}(\mu,\nu)}(\gamma,q),
\end{equation*}
where $\ov{\Gamma Q}(\mu,\nu)$ is the set of pairs $(\gamma,q)$ that are optimal for the quadratic bi-martingale problem, see \eqref{eq:optGammaQ}. The proof of the $\Gamma$-convergence is moved to Sections \ref{ssec:liminf} and \ref{ssec:limsup}.

\begin{theorem}
	\label{thm:Gamma}
	Assume a continuous cost ${c} \in C((\Rd)^2)$, bounded from below and satisfying the $p$-growth condition \eqref{eq:quad_bound_cost} for $p<2$, and probabilities $\mu,\nu \in \mathcal{P}^\ba_2(\Rd)$  (not necessarily in convex order).
	
	Take any sequences $\mu_{n}, \nu_{n} \in \mathcal{P}^\ba_2(\Rd)$ converging   in Zolotarev metric $Z_2$ to $\mu, \nu$, respectively. Moreover, take a sequence $\eps_n >0$ that converges to zero and for which,
	\begin{equation}
		\label{eq:rate_epsn}
		\lim_{n \to +\infty} \frac{Z_2(\mu,\mu_{n})  + Z_2(\nu,\nu_{n})}{\eps_n}=0.
	\end{equation}
	Then, the sequence $F_n$\, $\Gamma$-converges to $F$ on the space $\Mes((\Rd)^2; \R \times \Rd)$.
\end{theorem}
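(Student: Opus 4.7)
The plan is to verify the two standard $\Gamma$-convergence inequalities, with the continuity of $\mathcal{C}$ on $\mathcal{P}_2^\ba(\Rd)$ equipped with the $Z_2$ metric playing the central role. For the $\Gamma$-liminf, take any $(\gamma_n,q_n) \rightharpoonup (\gamma,q)$ with $\liminf F_n(\gamma_n,q_n) < +\infty$, and pass to a subsequence along which $F_n(\gamma_n,q_n)$ is uniformly bounded. Since $\iint |dq_n/d\gamma_n|^2 d\gamma_n \geq \mathcal{C}(\mu_n,\nu_n)$ by definition of $\mathcal{C}$, the penalty is nonnegative and both it and the $c$-cost are uniformly bounded; weak continuity of marginals then forces $(\gamma,q) \in \Gamma Q(\mu,\nu)$. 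Using Theorem~\ref{thm:RK}, $\mathcal{C}(\mu_n,\nu_n) = Z_2(\mu_n,\nu_n) + (m_2(\mu_n)+m_2(\nu_n))/2$, and the triangle inequality together with the bound $|m_2(\mu_n)-m_2(\mu)| \leq 2Z_2(\mu,\mu_n)$ (obtained by testing \eqref{eq:Z2_again} with $u = \pm\tfrac12|\cdot|^2$) yields $\mathcal{C}(\mu_n,\nu_n) \to \mathcal{C}(\mu,\nu)$. The penalty bound combined with $\eps_n \to 0$ forces the quadratic cost to saturate: $\iint|dq_n/d\gamma_n|^2 d\gamma_n \to \mathcal{C}(\mu,\nu)$. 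Proposition~\ref{prop:lsc} combined with the inequality $\iint|dq/d\gamma|^2 d\gamma \geq \mathcal{C}(\mu,\nu)$ valid on $\Gamma Q(\mu,\nu)$ then gives $(\gamma,q) \in \ov{\Gamma Q}(\mu,\nu)$. Finally, the $p$-growth \eqref{eq:quad_bound_cost} with $p<2$ together with the uniform $m_2$-bound on $\gamma_n$ (stemming from $Z_2$-convergence of the marginals) delivers weak continuity of $\iint c\, d\gamma_n$, hence $\liminf F_n(\gamma_n,q_n) \geq F(\gamma,q)$.

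\textbf{The recovery sequence via approximating common dominants.} For $(\gamma,q) \in \ov{\Gamma Q}(\mu,\nu)$ with $\iint c\,d\gamma < +\infty$, Proposition~\ref{prop:ovGammaQ_char} gives $q = \ov\zeta \gamma$ with $\ov\zeta$ Lipschitz, and $\rho := \ov\zeta\#\gamma \in \mu \curlyvee \nu$, so $m_2(\rho) = \mathcal{C}(\mu,\nu)$. I would construct an approximating common convex dominant of $(\mu_n,\nu_n)$ close to $\rho$ by setting $\rho_n^\mu \in \rho \curlyvee \mu_n$ and then $\tilde\rho_n \in \rho_n^\mu \curlyvee \nu_n$ (existence via Corollary~\ref{cor:existence_dominant}); transitivity of $\succeq_c$ gives $\tilde\rho_n \succeq_c \mu_n,\nu_n$, and the identity $\mathcal{C}(\rho,\mu) = m_2(\rho)$ valid when $\rho \succeq_c \mu$, combined with the Lipschitz estimate $|\mathcal{C}(\mu,\nu)-\mathcal{C}(\mu',\nu')| \leq 2Z_2(\mu,\mu') + 2Z_2(\nu,\nu')$, applied twice yields
\[
m_2(\tilde\rho_n) - \mathcal{C}(\mu_n,\nu_n) \,=\, O\big(Z_2(\mu,\mu_n) + Z_2(\nu,\nu_n)\big).
\]
Strassen's theorem then furnishes martingale plans $\gamma^1_n \in \Gamma_{\mathrm{M}}(\mu_n,\tilde\rho_n)$ and $\gamma^2_n \in \Gamma_{\mathrm{M}}(\nu_n,\tilde\rho_n)$, whose gluing along the common marginal $\tilde\rho_n$ produces $\varpi_n \in \Sigma(\mu_n,\nu_n)$. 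Proposition~\ref{prop:gammaq-omega} converts $\varpi_n$ to $(\gamma_n,q_n) \in \Gamma Q(\mu_n,\nu_n)$ with $\iint|dq_n/d\gamma_n|^2 d\gamma_n \leq m_2(\tilde\rho_n)$ by Jensen's inequality, so the scaled penalty is bounded by $O\big((Z_2(\mu,\mu_n)+Z_2(\nu,\nu_n))/\eps_n\big)$, which vanishes by the rate condition \eqref{eq:rate_epsn}.

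\textbf{The main obstacle: targeting the prescribed $(\gamma,q)$.} The delicate point is ensuring that the constructed $(\gamma_n,q_n)$ converges weakly to the specific $(\gamma,q)$ rather than to some other element of $\ov{\Gamma Q}(\mu,\nu)$; this is made subtle by the well-known instability of MOT in dimension $\geq 2$ of \cite{bruckerhoff2022instability}, which forbids a naive Strassen-based choice from passing to the limit. The cure is to coordinate the gluing with optimal bi-martingale plans $\eta^\mu_n \in \ov{\Gamma Q}(\mu,\mu_n)$ and $\eta^\nu_n \in \ov{\Gamma Q}(\nu,\nu_n)$ between the limit measures and their approximants (existing by Proposition~\ref{prop:ovGammaQ_char}): disintegrate $\eta^\mu_n(dxdx') = \mu(dx) \otimes \eta^\mu_{n,x}(dx')$ and similarly for $\eta^\nu_n$, and build the 4-plan
\[
\sigma_n(dx\,dx'\,dy\,dy') \,=\, \gamma(dx\,dy) \otimes \eta^\mu_{n,x}(dx') \otimes \eta^\nu_{n,y}(dy'),
\]
pushing forward to $\gamma_n := \pi_{x',y'}\# \sigma_n \in \Gamma(\mu_n,\nu_n)$. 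By Corollary~\ref{cor:RK_duality} the transport costs of $\eta^\mu_n$ and $\eta^\nu_n$ are of order $Z_2(\mu,\mu_n)$ and $Z_2(\nu,\nu_n)$ respectively, which forces $\sigma_n$ to concentrate on the diagonal $\{x=x',\,y=y'\}$ and hence $\gamma_n \rightharpoonup \gamma$. The companion $q_n$ is then recovered as the $(x',y')$-projection of $\zeta_n^\star\, \sigma_n$, where $\zeta_n^\star(x,y,x',y')$ is a combination of the Lipschitz coupling maps $\ov\zeta, \ov\zeta^\mu_n, \ov\zeta^\nu_n$ tailored so that both components of the bi-martingale condition \eqref{eq:moment-martingale} hold (here the bi-martingale conditions of $\eta^\mu_n$ and $\eta^\nu_n$ are essential), and the associated dominant $\tilde\rho_n = \zeta_n^\star \# \sigma_n$ satisfies the bound of the previous paragraph. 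Combining weak convergence, the vanishing penalty and weak continuity of the $c$-cost yields $\limsup F_n(\gamma_n,q_n) \leq F(\gamma,q)$, completing the proof.
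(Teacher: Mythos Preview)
Your proposal is correct and follows essentially the paper's route. The $\liminf$ argument matches the paper's Section~\ref{ssec:liminf} (the paper organizes it as a chain of inequalities with a fixed auxiliary $\eps>0$ sent to zero at the end, but the content is the same), and your recovery-sequence construction in the third paragraph is precisely the paper's: the optimal bi-martingale plans $\eta^\mu_n \in \ov{\Gamma Q}(\mu,\mu_n)$, $\eta^\nu_n \in \ov{\Gamma Q}(\nu,\nu_n)$ are the paper's $(\hat\gamma_n,\hat q_n)$, $(\check\gamma_n,\check q_n)$, your 4-plan $\sigma_n$ is the paper's $\lambda_n$, and the ``combination of coupling maps'' $\zeta_n^\star$ is given explicitly by $z_n(x,\xi,y,\eta) = \zeta(x,y) + \hat\zeta_n(x,\xi) + \check\zeta_n(y,\eta) - x - y$, with $q_n$ recovered by conditional averaging over $\lambda^n_{\xi,\eta}$.

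Two remarks. First, your middle paragraph (the iterated $\curlyvee$ construction of $\tilde\rho_n$) is a detour that the paper does not take; you correctly diagnose that it fails to target the prescribed $(\gamma,q)$, and your third paragraph is the actual proof, so the middle paragraph can be dropped. Second, the claim that the penalty is $O\big((Z_2(\mu,\mu_n)+Z_2(\nu,\nu_n))/\eps_n\big)$ does not follow from the Lipschitz estimate on $\mathcal{C}$ alone once you switch constructions: it requires expanding $|z_n|^2$, using the bi-martingale conditions of $\hat q_n,\check q_n,q$ to annihilate the two cross terms $\langle \zeta,\hat\zeta_n-x\rangle$ and $\langle \zeta,\check\zeta_n-y\rangle$, and bounding the remaining cross term $\langle\hat\zeta_n-x,\check\zeta_n-y\rangle$ via Cauchy--Schwarz. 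This is the computation in Section~\ref{ssec:limsup}, and it is where the optimality of $\eta^\mu_n,\eta^\nu_n$ (not merely their admissibility) is used.
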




The characterization of $\ov{\Gamma Q}(\mu,\nu)$  in Proposition \ref{prop:ovGammaQ_char} allows to get rid of the variable $q$ in the minimization of $F$. For any solution  $\ov{u}$ of the Zolotarev problem \eqref{eq:Z2_again}, let the  coupling  map $\ov{\zeta}$ and the set $\ov\Gamma(\mu,\nu)$ read as in the definitions \eqref{eq:ovzeta_again} and \eqref{eq:ovGamma_again}. Then, minimization of $F$ is equivalent to,
\begin{align}
	\label{eq:MOT_zeta_again}
	\min\left\{ \iint_{(\Rd)^2}c (x,y)\, \gamma(dxdy) \, : \, 	\gamma \in \ov\Gamma(\mu,\nu)   \right\}.
\end{align}
Next, the set  $\ov\Gamma(\mu,\nu)$ is equal to the set of martingale plans $\Gamma_{\mathrm{M}}(\mu,\nu)$ once the data are ordered, $\mu \preceq_c \nu$, see  Proposition \ref{prop:cvx_order_char}.
Accordingly, with the compactness result for the sequence of minimizers of $F_n$ that is proven in Section \ref{ssec:liminf}, we are lead to one of  the central results of this paper.

\begin{corollary}
	\label{cor:approx_MOT}
	Under the prerequisites of the theorem,
	let $(\gamma_n,q_n)$ be a sequence of minimizers for the problems  $(\mathrm{M^2OT}_n)$, which always exists. Then, up to choosing a subsequence, $(\gamma_n,q_n) \rightharpoonup (\gamma,q) \in \Gamma Q(\mu,\nu)$ where $\gamma \in \ov\Gamma(\mu,\nu)$ solves the problem  $\eqref{eq:MOT_zeta_again}$, while $\min (\mathrm{M^2 OT}_n) \to  \min \eqref{eq:MOT_zeta_again}$
	
	In particular, if $\mu \preceq_c \nu$, then $\gamma \in \Gamma_{\mathrm{M}}(\mu,\nu)$ solves the martingale optimal transport problem  $(\mathrm{MOT})$, while $\min (\mathrm{M^2 OT}_n) \to  \min (\mathrm{MOT})$.
\end{corollary}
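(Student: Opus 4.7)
The plan is to execute the standard $\Gamma$-convergence pipeline: existence of minimizers for each $F_n$, equi-coercivity of the minimizing sequence, and then apply Theorem~\ref{thm:Gamma} in conjunction with the identification of the limit problem with \eqref{eq:MOT_zeta_again}.

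First, existence of $(\gamma_n,q_n)$ minimizing $F_n$ for fixed $n$ will follow from Corollary~\ref{cor:existence} once the cost $\cost_n(x,y,z) = c(x,y) + \frac{1}{2\eps_n}|z|^2 - \frac{\mathcal{C}(\mu_n,\nu_n)}{2\eps_n}$ is seen to satisfy (H1)--(H3): continuity together with the lower bound $c \geq -D_1$ yields (H1), while the quadratic term in $z$ provides both convexity (H2) and superlinearity (H3). Finiteness of $\inf(\mathrm{M^2OT}_n)$ will be checked by plugging in the canonical competitor $(\mu_n \otimes \nu_n,\,(x+y-\ba)\mu_n \otimes \nu_n)$; the $p$-growth bound on $c$ with $p<2$ then controls $\iint c \, d(\mu_n \otimes \nu_n)$ using the uniform boundedness of $m_2(\mu_n), m_2(\nu_n)$, which is a consequence of $Z_2$-convergence (equivalently, $W_2$-convergence) of the marginals.

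Second, a uniform bound on $F_n(\gamma_n,q_n)$ is extracted by comparison with a competitor $(\tilde\gamma_n,\tilde q_n) \in \ov{\Gamma Q}(\mu_n,\nu_n)$ supplied by Proposition~\ref{prop:ovGammaQ_char}, for which the penalty term vanishes and $F_n(\tilde\gamma_n,\tilde q_n) = \iint c\,d\tilde\gamma_n$ remains uniformly bounded above. Combining with $c \geq -D_1$, this forces $\iint |\zeta_n|^2\,d\gamma_n \leq \mathcal{C}(\mu_n,\nu_n) + O(\eps_n)$ with $\zeta_n = dq_n/d\gamma_n$; the identity \eqref{eq:CZ} translates the $Z_2$-convergence of the marginals into continuity of $\mathcal{C}$, keeping the right-hand side uniformly bounded. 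Tightness of $\{\gamma_n\}$ comes from the weak convergence of its two marginals, while the uniform $L^2_{\gamma_n}$ bound on $\zeta_n$ yields, by the same partition $B_\zeta, B_\zeta^c$ argument as in Proposition~\ref{prop:compactness}, both equi-boundedness and tightness of $\{|q_n|\}$ --- this is precisely the compactness result placed in Section~\ref{ssec:liminf}. Extracting a subsequence via Prokhorov's theorem produces $(\gamma_n,q_n)\rightharpoonup(\gamma,q)$ weakly in $\Mes((\Rd)^2;\R\times\Rd)$.

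Finally, the fundamental theorem of $\Gamma$-convergence applied to Theorem~\ref{thm:Gamma} together with the equi-coercivity above ensures that $(\gamma,q)$ minimizes $F$ and that $\min(\mathrm{M^2OT}_n) = F_n(\gamma_n,q_n) \to F(\gamma,q) = \min F$. The finiteness $F(\gamma,q)<+\infty$ forces $(\gamma,q) \in \ov{\Gamma Q}(\mu,\nu) \subset \Gamma Q(\mu,\nu)$, and by Proposition~\ref{prop:ovGammaQ_char} we have $q = \ov\zeta\gamma$ with $\gamma \in \ov\Gamma(\mu,\nu)$, so minimizing $F$ is equivalent to minimizing $\iint c\,d\gamma$ over $\gamma \in \ov\Gamma(\mu,\nu)$, i.e.\ solving \eqref{eq:MOT_zeta_again}. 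In the ordered case $\mu \preceq_c \nu$, Proposition~\ref{prop:cvx_order_char} identifies $\ov\Gamma(\mu,\nu)$ with $\Gamma_{\mathrm{M}}(\mu,\nu)$, so \eqref{eq:MOT_zeta_again} reduces to $(\mathrm{MOT})$ and the limit $\gamma$ is an optimal martingale plan, concluding the proof. The main obstacle to anticipate is ensuring that the large constant $\mathcal{C}(\mu_n,\nu_n)/\eps_n$ in the penalty does not destabilise the bound on $\zeta_n$; this is exactly where the calibration condition \eqref{eq:rate_epsn} on $\eps_n$ is expected to enter through Theorem~\ref{thm:Gamma}.
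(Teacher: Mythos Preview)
Your proposal is correct and follows essentially the same route as the paper: the corollary is not given a separate proof there but is stated as the consequence of Theorem~\ref{thm:Gamma} together with the compactness argument of Section~\ref{ssec:liminf} (which uses the same competitor in $\ov{\Gamma Q}(\mu_n,\nu_n)$ you describe) and the identification of $\min F$ with \eqref{eq:MOT_zeta_again} via Proposition~\ref{prop:ovGammaQ_char} and Proposition~\ref{prop:cvx_order_char}. One small clarification: the calibration condition \eqref{eq:rate_epsn} is not needed for the compactness bound on $\zeta_n$ --- that follows already from the uniform boundedness of $\mathcal{C}(\mu_n,\nu_n)$ --- but is used only in the $\limsup$ half of the $\Gamma$-convergence proof of Theorem~\ref{thm:Gamma}.
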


The set $\ov\Gamma(\mu,\nu)$ serves as a generalization of the set of martingale plans to the case where the data $\mu,\nu$ are not ordered. To give meaning of our $\Gamma$-convergence result in this wider scenario, a quantitative relation between the plans $\gamma \in\ov\Gamma(\mu,\nu)$ and the induced pair of martingale plans $\gamma_i = (\pi_i,\ov\zeta)\# \gamma$ should be elucidated. For instance, take  $\gamma_1 \in \Gamma_{\mathrm{M}}(\mu,\ov\rho)$ where, according to Theorem \ref{thm:RK_reinvented}, $\ov\rho = \ov\zeta \# \gamma \in \Pi_{\succeq_c \mu}(\nu)$ is a Zolotarev projection. The goal of the result below is to show that  discrepancy between $\gamma$ and $\gamma_1$ can be measured through the minimal Zolotarev distance between the respective second marginals $\nu$ and $\ov\rho$. Here, the discrepancy is understood as the mean $W_2$-squared error between the kernel transports computed with respect to the common first marginal $\mu$. Accordingly, if $\nu$ is close to dominate $\mu$ for convex order ($\alpha_{\succeq_c}(\nu \, | \, \mu)$ is close to one), then the admissible set $\ov\Gamma(\mu,\nu)$ consists of plans  which are close to being a martingale in the foregoing sense. A symmetric result holds true between $\gamma$ and $\gamma_2$.

\begin{proposition}
	\label{prop:dispcrepancies}
	For any  plan $\gamma \in \ov\Gamma(\mu,\nu)$ define $\ov\rho \in \Pi_{\succeq_c \mu}(\nu) = \Pi_{\succeq_c \nu}(\mu)$, $\gamma_1 \in \Gamma_{\mathrm{M}}(\mu,\ov\rho)$, and $\gamma_2 \in \Gamma_{\mathrm{M}}(\nu,\ov\rho)$ as above. Then,
\begin{align*}
	&\int_X W_2^2(\gamma_x,\gamma^1_x) \,\mu(dx)  =  \min_{\rho \succeq_c \mu}2 Z_2(\nu,\rho) = 2Z_2(\nu,\ov\rho), \\
	&	\int_Y W_2^2(\gamma_y,\gamma^2_y) \,\nu(dy)  =  \min_{\rho \succeq_c \nu}2 Z_2(\mu,\rho) = 2Z_2(\mu,\ov\rho),
\end{align*}
where the  transport kernels constitute the following disintegrations: $\gamma(dxdy) = \mu(dx) \otimes \gamma_{x}(dy)$,  $\gamma(dxdy) = \gamma_{y}(dx) \times \nu(dy)$, $\gamma_1(dxdz) = \mu(dx) \otimes \gamma^1_x(dz)$, and $\gamma_2(dydz) = \nu(dy) \otimes  \gamma^2_y(dz)$.
\end{proposition}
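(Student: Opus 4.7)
The plan is to reduce the integrand $W_2^2(\gamma_x, \gamma^1_x)$ to a tractable closed form for $\mu$-a.e.\ $x$, integrate against $\mu$, and then collapse the resulting expression via the bi-martingale conditions built into $\ov\Gamma(\mu,\nu)$, ultimately identifying the answer with $m_2(\ov\rho) - m_2(\nu)$, which Proposition \ref{prop:cvx_order_char} converts to $2Z_2(\nu,\ov\rho)$.

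The first step exploits the explicit structure of $\ov\zeta$. Since $\gamma_1 = (\pi_1,\ov\zeta)\#\gamma$, the disintegration kernels obey $\gamma^1_x = \ov\zeta(x,\cdot)\#\gamma_x$ for $\mu$-a.e.\ $x$. Rewriting
\begin{equation*}
    \ov\zeta(x,y) = \tfrac{x+\nabla\ov u(x)}{2} + \tfrac{1}{2}\bigl(y-\nabla\ov u(y)\bigr),
\end{equation*}
and recalling that $\mathrm{lip}(\nabla\ov u)\leq 1$ forces $\varphi(y):=\tfrac{1}{2}|y|^2-\ov u(y)$ to be convex (cf.\ \eqref{eq:u_adm_char}), one sees that
\begin{equation*}
    \psi_x(y) := \tfrac{1}{2}\varphi(y) + \tfrac{1}{2}\bigl\langle x+\nabla\ov u(x),y\bigr\rangle
\end{equation*}
is convex with $\nabla\psi_x = \ov\zeta(x,\cdot)$. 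Brenier's theorem therefore promotes $\ov\zeta(x,\cdot)$ to the optimal transport map from $\gamma_x$ to $\gamma^1_x$, yielding
\begin{equation*}
    W_2^2(\gamma_x,\gamma^1_x) = \int_Y \bigl|\ov\zeta(x,y)-y\bigr|^2\,\gamma_x(dy).
\end{equation*}

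Integrating against $\mu$ and expanding the square gives
\begin{equation*}
    \int_X W_2^2(\gamma_x,\gamma^1_x)\,\mu(dx) = \iint |\ov\zeta|^2\,d\gamma - 2\iint\langle\ov\zeta,y\rangle\,d\gamma + m_2(\nu).
\end{equation*}
The first term equals $m_2(\ov\rho)$ by definition of $\ov\rho=\ov\zeta\#\gamma$. For the cross term I disintegrate $\gamma(dxdy)=\gamma_y(dx)\otimes\nu(dy)$ and apply the second bi-martingale relation $\int\ov\zeta(x,y)\,\gamma_y(dx)=y$ for $\nu$-a.e.\ $y$ from \eqref{eq:ovGamma_again}, obtaining $\iint\langle\ov\zeta,y\rangle\,d\gamma = m_2(\nu)$. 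Collecting terms produces $m_2(\ov\rho)-m_2(\nu)$. Since $\ov\rho\in\mu\curlyvee\nu$ dominates $\nu$ for convex order, the ordered-case identity \eqref{eq:Z2_ordered} in Proposition \ref{prop:cvx_order_char} turns this into $2Z_2(\nu,\ov\rho)$, and Theorem \ref{thm:Zol_proj_again} identifies $Z_2(\nu,\ov\rho)$ with $\min_{\rho\succeq_c\mu}Z_2(\nu,\rho)$. The second identity follows by a verbatim symmetric argument, using instead the disintegration with respect to $\mu$ and the first bi-martingale relation $\int\ov\zeta(x,y)\,\gamma_x(dy)=x$.

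The main obstacle is the Brenier-map identification in the first step: without recognizing $\ov\zeta(x,\cdot)$ as the gradient of a convex function one would only obtain the trivial upper bound $W_2^2(\gamma_x,\gamma^1_x)\leq\int|\ov\zeta(x,y)-y|^2\,\gamma_x(dy)$, which would propagate to an inequality in the final statement and spoil the exact matching with $2Z_2(\nu,\ov\rho)$. Everything downstream is a routine expansion combined with the constraints already encoded in $\ov\Gamma(\mu,\nu)$.
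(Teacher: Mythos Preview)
Your proof is correct and follows essentially the same route as the paper: identifying $\ov\zeta(x,\cdot)$ as a Brenier map via the convexity of $\tfrac{1}{2}|\cdot|^2-\ov u$, then expanding $\iint|\ov\zeta-y|^2\,d\gamma$ and collapsing it to $m_2(\ov\rho)-m_2(\nu)$ using the bi-martingale constraint. The only cosmetic difference is that the paper cites the intermediate identities \eqref{eq:mix_term} and \eqref{eq:Cmunu_ovzeta} rather than redoing the expansion, and verifies the disintegration $\gamma^1_x=\ov\zeta(x,\cdot)\#\gamma_x$ by an explicit test-function computation.
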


Thanks to the equality \eqref{eq:triangle_eq}, a neat consequence of this result is a formula for the distance $Z_2(\mu,\nu)$ that portrays it as the  mean of the two discrepancies: between any plan $\gamma \in \ov{\Gamma}(\mu,\nu)$ and the two induced martingale plans $\gamma_1$ and $\gamma_2$.

\begin{corollary}
	For any $\gamma \in \ov\Gamma(\mu,\nu)$ and $\gamma_i =(\pi_i,\ov\zeta)\# \gamma$, there holds the equality,
	\begin{equation*}
		Z_2(\mu,\nu) = \int_X \frac{1}{2} W_2^2(\gamma_x,\gamma^1_x) \,\mu(dx) + \int_Y\frac{1}{2} W_2^2(\gamma_y,\gamma^2_y) \,\nu(dy).
	\end{equation*}
\end{corollary}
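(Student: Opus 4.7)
The proof is a direct combination of two ingredients already established in the paper. The plan is to apply Proposition \ref{prop:dispcrepancies} term by term and then collapse the resulting sum via the triangle-type identity \eqref{eq:triangle_eq}.

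First, given $\gamma \in \ov\Gamma(\mu,\nu)$, set $\ov\rho = \ov\zeta \# \gamma$ and $\gamma_i = (\pi_i,\ov\zeta)\#\gamma$ for $i=1,2$. By Theorem \ref{thm:RK_reinvented} (equivalence (i)--(v)) and Theorem \ref{thm:Zol_proj_again}, we have $\ov\rho \in \Pi_{\succeq_c \mu}(\nu) = \Pi_{\succeq_c \nu}(\mu) = \mu \curlyvee \nu$, and $\gamma_1 \in \Gamma_{\mathrm{M}}(\mu,\ov\rho)$, $\gamma_2 \in \Gamma_{\mathrm{M}}(\nu,\ov\rho)$. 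Proposition \ref{prop:dispcrepancies} then yields
\begin{equation*}
\int_X \tfrac{1}{2} W_2^2(\gamma_x,\gamma^1_x)\,\mu(dx) = Z_2(\nu,\ov\rho),
\qquad
\int_Y \tfrac{1}{2} W_2^2(\gamma_y,\gamma^2_y)\,\nu(dy) = Z_2(\mu,\ov\rho).
\end{equation*}

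Summing these two identities gives that the right-hand side of the claimed equality equals $Z_2(\mu,\ov\rho) + Z_2(\nu,\ov\rho)$. Since $\ov\rho \in \mu \curlyvee \nu$, the identity \eqref{eq:triangle_eq} provides exactly
\begin{equation*}
Z_2(\mu,\nu) = Z_2(\mu,\ov\rho) + Z_2(\ov\rho,\nu),
\end{equation*}
which concludes the proof. No step here looks like a real obstacle: the nontrivial content has already been absorbed into Proposition \ref{prop:dispcrepancies} and the identity \eqref{eq:triangle_eq}, and the corollary is essentially their superposition.
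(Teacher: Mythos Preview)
Your proof is correct and follows exactly the approach indicated in the paper: the corollary is stated as a direct consequence of Proposition~\ref{prop:dispcrepancies} combined with the identity~\eqref{eq:triangle_eq}, and your argument spells this out precisely.
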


\begin{proof}[Proof of Proposition \ref{prop:dispcrepancies}]
	For each $x \in \Rd$ define the map  $T_x:\Rd \to \Rd$,
\begin{equation*}
	T_x(y) = \ov\zeta(x,y) = \frac{x+\nabla \ov{u}(x)}{2} +  \frac{y-\nabla \ov{u}(y)}{2} = \nabla_y \bigg(\Big\langle  \frac{x+\nabla \ov{u}(x)}{2} ,y \Big\rangle +  \frac{1}{2}\bigg( \frac{\abs{y}^2}{2}  - \ov{u}(y)\bigg) \bigg).
\end{equation*}
By the admissibility of $\ov{u}$ in the Zolotarev problem \eqref{eq:Z2_again} we have  $\mathrm{lip}(\nabla \ov{u}) \leq 1$. Then, from \eqref{eq:u_adm_char} we deduce that the map $\frac{\abs{\argu}^2}{2}  - \ov{u}$ is convex. Therefore, $T_x$ is a cyclically monotone map for every $x$. Since it is also continuous (even 1-Lipschitz), Brenier theorem \cite{brenier1991} states that $T_x$ is an optimal  map for $L^2$-transport from $\gamma_x$ to	$\lambda_x = T_x \# \gamma_x$. To see that $\lambda_x = \gamma^1_x$ for $\mu$-a.e. $x$, for a bounded Borel function $\varphi:\Rd \times  \Rd \to \R$ we compute,
\begin{align*}
	\iint \varphi(x,z) \,\gamma_1(dxdz) = \iint \varphi\big(x,\ov\zeta(x,y)\big) \gamma(dxdy) &= \int \left( \int \varphi\big(x,T_x(y)\big) \gamma_x(dy) \right) \mu(dx)  \\
	&= \int \left( \int \varphi\big(x,z\big) \lambda_x(dz) \right) \mu(dx).
\end{align*}
Readily, the equality $\lambda_x = \gamma^1_x$ follows by the $\mu$-a.e.-uniqueness of the disintegration. The theorem of Brenier now furnishes,
\begin{align*}
	\int W_2^2(\gamma_x,\gamma^1_x) \,\mu(dx) = \int \left(\int\abs{T_x(y)-y}^2 \, \gamma_x(dy)\right)\mu(dx)  =  \iint  \abs{\ov\zeta(x,y)-y}^2\gamma(dxdy)&  \\
	 =  \iint \abs{\ov\zeta(x,y)}^2 \gamma(dxdy) - m_2(\nu)  = \mathcal{C}(\mu,\nu)-m_2(\nu) = m_2(\ov\rho) - m_2(\nu)&.
\end{align*}
To move to the second line we argue similarly as in \eqref{eq:mix_term}. Then, we use the identity \eqref{eq:Cmunu_ovzeta} in Proposition \ref{prop:ovGammaQ_char} and, subsequently, Proposition \ref{prop:rhozetagamma} for $f = \abs{\argu}^2$. Finally, we acknowledge \eqref{eq:min_dist} to arrive at the first asserted equality. The proof of the second is symmetric.
\end{proof}

\subsection{Proof of the lim\,inf inequality and compactness}
\label{ssec:liminf}

	In order to exploit the $\Gamma$-convergence result in Theorem \ref{thm:Gamma}, that is to pass to Corollary \ref{cor:approx_MOT}, we must additionally prove the weak precompactness of the sequence of minimizers $\{(\gamma_n,q_n)\}$. Only the precompactness of $\{q_n\}$ is non-trivial, and to show it we can argue similarly as in the proof of Proposition~\ref{prop:compactness}.

\begin{proof}[Proof of the compactness]
		Firstly, we need to show the tightness of  the sequence of positive measures $\{(1+ \abs{x} + \abs{y})\, \gamma_n\}$. 
		Owing to the convergences $\mu_n \to \mu$, $\nu_n \to \nu$ in the metric $Z_2$, we have convergence (and hence boundedness) of the second moment sequences $m_2(\mu_n)$, $m_2(\nu_n)$. This yields tightness of the sequences $\{\abs{x}\mu_n\}$, $\{\abs{y}\nu_n\}$. It translates to the tightness of $\{(1+ \abs{x} + \abs{y})\, \gamma_n\}$ thanks to the admissibility $\gamma_n \in \Gamma(\mu_n,\nu_n)$.
		
		Readily, the proof of  Proposition \ref{prop:compactness} can be reproduced if we are able to point to a superlinear function $h$ such that $\iint h(\frac{dq_n}{d\gamma_n}) \,d\gamma_n$ is equi-bounded. Since for any $(\gamma_n,q_n) \in \ov{\Gamma Q}(\mu_n,\nu_n) \subset \Gamma Q(\mu_n,\nu_n)$ we have $\iint \big|\frac{dq_n}{d\gamma_n}\big|^2   d\gamma_n = \mathcal{C}(\mu_n,\nu_n)$, we can first show equi-boundedness of the minimal costs, 
		\begin{align*}
			&F_n(\gamma_n,q_n) = \min\left\{ \iint \bigg( c + \frac{1}{2\eps_n} \Big(\Big|\frac{dq}{d\gamma}\Big|^2 - \mathcal{C}(\mu_n,\nu_n) \Big) \bigg) d\gamma \, : \, 		(\gamma,q) \in \Gamma Q(\mu_n,\nu_n)    \right\} \\
			& \quad \leq 	\inf\left\{ \iint c   \, d\gamma \, : \, 		(\gamma,q) \in \ov{\Gamma Q}(\mu_n,\nu_n)    \right\} \\
			&  \quad  \leq \inf\left\{ \iint \!D(1+ \abs{x}^p + \abs{y}^p)  \, d\gamma \, : \, 		(\gamma,q) \in \ov{\Gamma Q}(\mu_n,\nu_n)    \right\}  \\
			& \quad = D\big(1+m_p(\mu_n) + m_p(\nu_n)\big) \leq C_1 < +\infty,
		\end{align*}
		where we used the fact that $c$ has growth $p<2$. Multiplying the chain of inequalities by $\eps_n$ furnishes,
		\begin{align*}
			\iint \Big( \eps_n c + \frac{1}{2} \Big|\frac{dq_n}{d\gamma_n}\Big|^2 \Big)  d\gamma_n \leq \eps_n C_1 +\frac{\mathcal{C}(\mu_n,\nu_n)}{2} \leq C_2 < +\infty,
		\end{align*}
		where the boundedness of $\mathcal{C}(\mu_n,\nu_n)$ follows from \eqref{eq:CZ}. Since $c$ is assumed to be bounded from below, $\eps_n c(x,y) + \frac{1}{2}\abs{z}^2 \geq  \frac{1}{2}\abs{z}^2 - C_3$ for some finite constant $C_3$.	We can choose $h(z): = \frac{1}{2}\abs{z}^2 - C_3$ and proceed with proving precompactness of $\{q_n\}$ as in Proposition \ref{prop:compactness}.
\end{proof}

The proof of the $\liminf$ condition is fairly straightforward as it mainly uses the lower semi-continuity of the total cost functional in the $(\mathrm{M^2 OT})$ formulation.

\begin{proof}[Proof of the lim\,inf inequality]
	Take $(\gamma,q) \in \Mes((\Rd)^2; \R \times \Rd)$ and a  weakly converging sequence $\Mes((\Rd)^2; \R \times \Rd) \ni (\gamma_n,q_n) \rightharpoonup (\gamma,q)$. We can assume that  $(\gamma_n,q_n) \in \Gamma Q(\mu_n,\nu_n)$ and $q_n \ll \gamma_n$, since otherwise we can either work on a subsequence or the $\liminf$ inequality is trivial. The characterization \eqref{eq:Z2_convergence} of convergence $\mu_n \to \mu$ in the metric $Z_2$ guarantees that $\int \varphi\, d\mu_n$ and $\int \pairing{\Phi,x}\, d\mu_n$ converge to $\int \varphi\, d\mu$ and $\int \pairing{\Phi,x}\, d\mu$, respectively, for any $(\varphi,\Phi) \in C_b(\Rd;\R\times\Rd)$, and the same holds for $\nu_n,\nu$.  Accordingly, the weak convergences of $\gamma_n$ and $q_n$ furnishes $\gamma \in \Gamma(\mu,\nu)$ and $q \in Q(\mu,\nu)$, see \eqref{eq:moment-martingale}. In order to conclude that $(\gamma,q) \in \Gamma Q(\mu,\nu)$, the absolute continuity $q \ll \gamma$ will be shown in the sequel.
	
	Upon fixing  $\eps>0$, we use the fact that $(\gamma_n,q_n) \in \Gamma Q(\mu_n,\nu_n)$ to show the following chain of inequalities,
	\begin{align*}
		\liminf_{n \to +\infty} & F_n(\gamma_n,q_n) =  	\liminf_{n \to +\infty}   \iint \bigg( c  (x,y) + \frac{1}{2\eps_n} \Big(\Big|\frac{dq_n}{d\gamma_n}(x,y)\Big|^2 - \mathcal{C}(\mu_n,\nu_n) \Big) \bigg) \gamma_n(dxdy) \\
		& \geq  	\liminf_{n \to +\infty}   \iint \bigg( c  (x,y) + \frac{1}{2\eps} \Big(\Big|\frac{dq_n}{d\gamma_n}(x,y)\Big|^2 - \mathcal{C}(\mu_n,\nu_n) \Big) \bigg) \gamma_n(dxdy) \\
		&\geq  	\liminf_{n \to +\infty}   \iint \bigg( c  (x,y) + \frac{1}{2\eps} \Big(\Big|\frac{dq_n}{d\gamma_n}(x,y)\Big|^2 - \mathcal{C}(\mu,\nu) \Big) \bigg) \gamma_n(dxdy)  \\
		& \qquad + \liminf_{n \to +\infty} \frac{\mathcal{C}(\mu,\nu) -\mathcal{C}(\mu_n,\nu_n) }{2\eps}\\
		& = 	\liminf_{n \to +\infty}   \iint \bigg( c  (x,y) + \frac{1}{2\eps} \Big(\Big|\frac{dq_n}{d\gamma_n}(x,y)\Big|^2 - \mathcal{C}(\mu,\nu) \Big) \bigg) \gamma_n(dxdy) \\
		& \geq 
		 \begin{cases}
		 \iint \Big( c  (x,y) + \frac{1}{2\eps} \big(\big|\frac{dq}{d\gamma}(x,y)\big|^2 - \mathcal{C}(\mu,\nu) \big) \Big) \gamma(dxdy)  & \text{if \ \  $\gamma \geq 0$ and  $q \ll \gamma$,} \\
			+ \infty & \text{otherwise}.
		\end{cases}
	\end{align*}
	The arguments behind the subsequent inequalities are as follows:
	\begin{itemize}
		\item[-] to pass to the second line we recall that $	\iint\big|\frac{dq_n}{d\gamma_n}(x,y)\big|^2  \gamma_n(dxdy) \geq \mathcal{C}(\mu_n,\nu_n)$,  see  \eqref{eq:C_gammaq};
		\item[-] to pass to the third line we simply acknowledge that $\gamma_n$ are probabilities;
		\item[-] to pass to the forth line we acknowledge the formula \eqref{eq:CZ} to see that
		\begin{align*}
			\mathcal{C}(\mu,\nu) - \mathcal{C}(\mu_n,\nu_n) = Z_2(\mu,\nu) - Z_2(\mu_n,\nu_n) +\tfrac{m_2(\mu)-m_2(\mu_n)  }{2} +\tfrac{m_2(\nu)-m_2(\nu_n) }{2} 
		\end{align*}
		which converges to zero since $\mu_n,\nu_n$ converge to $\mu,\nu$ in the $Z_2$ metric, cf. \eqref{eq:Z2_convergence};
		\item[-] to pass to the last line we use  the lower semi-continuity result in Proposition \ref{prop:lsc}.
	\end{itemize}
	Since the case when $\liminf_{n \to +\infty}  F_n(\gamma_n,q_n) < +\infty$ is the only non-trivial scenario, from the chain of inequalities above we can deduce that indeed $q \ll \gamma$. This establishes that $(\gamma,q) \in \Gamma Q(\mu,\nu)$. With that information at hand, we know that $\iint\big|\frac{dq}{d\gamma}(x,y)\big|^2  \gamma(dxdy) \geq \mathcal{C}(\mu,\nu)$, and sending $\eps$ to zero gives,
	\begin{equation*}
		\liminf_{n \to +\infty}  F_n(\gamma_n,q_n) \geq \iint c  (x,y)  \, \gamma(dxdy)  + \begin{cases}
			0 & \text{if } \iint\big|\frac{dq}{d\gamma}(x,y)\big|^2  \gamma(dxdy) = \mathcal{C}(\mu,\nu), \\
			+\infty & \text{otherwise}.
		\end{cases}
	\end{equation*}
	Considering the definition of the set $\ov{\Gamma Q}(\mu,\nu)$, see \eqref{eq:optGammaQ}, we recognize the right hand side above as $F(\gamma,q)$ exactly. This concludes the proof of the $\liminf$ inequality.
\end{proof}

\subsection{Proof of the lim\,sup inequality}
\label{ssec:limsup}
Let us now construct a recovery sequence $(\gamma_n,q_n)$ for a pair $(\gamma,q)$. The task is trivial unless $F(\gamma,q) < +\infty$, thus we can assume that $(\gamma,q) \in \ov{\Gamma Q}(\mu,\nu)$, cf. the definition \eqref{eq:optGammaQ}.
In particular, $(\gamma,q) \in \Gamma Q(\mu,\nu)$ and, 
\begin{equation}
	\label{eq:opt_gammaq}
	\iint |\zeta(x,y)|^2  \gamma(dxdy) = \mathcal{C}(\mu,\nu), \qquad  \text{where}\quad \zeta = \frac{dq}{d\gamma}.
\end{equation}
For each $n$ take any pairs $(\hat\gamma_n,\hat{q}_n) \in \ov{\Gamma Q}(\mu,\mu_n)$ and $(\check\gamma_n,\check{q}_n) \in \ov{\Gamma Q}(\nu,\nu_n)$. Namely,  $(\hat\gamma_n,\hat{q}_n) \in \Gamma Q(\mu,\mu_n)$ and $(\check\gamma_n,\check{q}_n) \in \Gamma Q(\nu,\nu_n)$, while
\begin{align}
	\label{eq:Z2n_a}
	&\iint \frac{1}{2}\Big( \big| \hat{\zeta}_n(x,\xi)-x\big|^2 +\big|\hat{\zeta}_n(x,\xi)-\xi\big|^2 \Big) \hat\gamma_n(dxd\xi) = 	Z_2(\mu,\mu_n) , \qquad   \hat\zeta_n = \frac{d\hat{q}_n}{d\hat\gamma_n}, \\
	\label{eq:Z2n_b}
	& \iint \frac{1}{2}\Big( \big| \check{\zeta}_n(y,\eta)-y\big|^2 +\big|\check{\zeta}_n(y,\eta)-\eta\big|^2 \Big) \check\gamma_n(dyd\eta) = 	Z_2(\nu,\nu_n) , \qquad  \check\zeta_n = \frac{d\check{q}_n}{d\check\gamma_n}.
\end{align}
We will now perform a gluing of the measures $\gamma$, $\hat\gamma_n$, and $\check\gamma_n$. To that end, let us perform the disintegrations,
\begin{equation*}
	\hat{\gamma}_n(dxd\xi) = \mu(dx) \otimes \hat\gamma_{x}^n(d\xi), \qquad 	\check{\gamma}_n(dyd\eta) = \nu(dy) \otimes \check\gamma_{y}^n(d\eta),
\end{equation*}
where $x \mapsto \hat\gamma_{x}^n$ and $y \mapsto \check\gamma_{y}^n$ are Borel measurable measure-valued maps. The gluing is now achieved using the generalized product measure,
\begin{equation*}
	\lambda_n(dxd\xi dy d\eta) := \gamma(dxdy) \otimes \big(\hat\gamma_{x}^n \otimes \check\gamma_{y}^n \big)(d\xi d\eta).
\end{equation*}
Its marginals satisfy $\pi_{1,3} \# \lambda_n = \gamma$, $\pi_{1,2} \# \lambda_n = \hat\gamma_n$, $\pi_{3,4} \# \lambda_n = \check\gamma_n$, whilst $\pi_{2,4} \# \lambda_n$ belongs to $\Gamma(\mu_n,\nu_n)$. The latter is thus a candidate for the recovery sequence,
\begin{equation*}
	\gamma_n(d\xi d\eta) := \pi_{2,4} \# 	\lambda_n(dxd\xi dy d\eta).
\end{equation*}
In order to build $q_n$, we first define the map $z_n: (\Rd)^4 \to \Rd$,
\begin{equation*}
	z_n(x,\xi,y,\eta) := \zeta(x,y) + \hat{\zeta}_n(x,\xi) +\check{\zeta}_n(y,\eta) - x -y.
\end{equation*}
Finally, with $(\xi,\eta) \mapsto \lambda^n_{\xi,\eta}$ being the Borel map in the disintegration $\lambda_n = \gamma_n \otimes \lambda^n_{\xi,\eta}$, we define
\begin{align*}
	q_n := \zeta_n \gamma_n, \qquad \text{where} \quad  \zeta_n(\xi,\eta):= \iint z_n(x,\xi,y,\eta) \,\lambda^n_{\xi,\eta}(dxdy).
\end{align*}
In the next two lemmas we show that the sequence $(\gamma_n,q_n)$ is  admissible, i.e. $F_n(\gamma_n,q_n) < +\infty$, and that it  weakly converges to $(\gamma,q)$.

\begin{lemma}
	For every $n$ there holds $(\gamma_n,q_n) \in \Gamma Q(\mu_n,\nu_n)$.
\end{lemma}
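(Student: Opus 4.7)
The plan is to verify the three conditions defining $\Gamma Q(\mu_n,\nu_n)$: (i) $\gamma_n\in\Gamma(\mu_n,\nu_n)$, (ii) $q_n\in Q(\mu_n,\nu_n)$, and (iii) $q_n\ll\gamma_n$. Condition (iii) holds by construction, since $q_n=\zeta_n\gamma_n$, where $\zeta_n$ is a Borel map (being the $\lambda_n$-conditional expectation of $z_n$ given the last two coordinates) whose $\gamma_n$-integrability follows from the bound $\int|z_n|\,d\lambda_n\le\int|\zeta|\,d\gamma+\int|\hat\zeta_n|\,d\hat\gamma_n+\int|\check\zeta_n|\,d\check\gamma_n+m_1(\mu)+m_1(\nu)<+\infty$, using the $\mathcal{P}_2^{\ba}$ regularity throughout and the fact that $\pi_{1,3}\#\lambda_n=\gamma$, $\pi_{1,2}\#\lambda_n=\hat\gamma_n$, $\pi_{3,4}\#\lambda_n=\check\gamma_n$. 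Condition (i) is immediate from the gluing: the last two marginal identities give $\pi_1\#\gamma_n=\pi_2\#\hat\gamma_n=\mu_n$ and $\pi_2\#\gamma_n=\pi_2\#\check\gamma_n=\nu_n$.

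The bulk of the argument is (ii). By \eqref{eq:moment-martingale}, it suffices to show that for every bounded Borel $\Phi:\Rd\to\Rd$,
\begin{equation*}
\iint\langle\Phi(\xi),\zeta_n(\xi,\eta)-\xi\rangle\,\gamma_n(d\xi d\eta)=0,
\end{equation*}
and symmetrically with $\Psi(\eta)$ in place of $\Phi(\xi)$ and $\zeta_n-\eta$ in place of $\zeta_n-\xi$. The driving algebraic observation, stemming from $z_n=\zeta(x,y)+\hat\zeta_n(x,\xi)+\check\zeta_n(y,\eta)-x-y$, is the decomposition
\begin{equation*}
z_n-\xi=\bigl(\zeta(x,y)-x\bigr)+\bigl(\hat\zeta_n(x,\xi)-\xi\bigr)+\bigl(\check\zeta_n(y,\eta)-y\bigr),
\end{equation*}
with the analogous split $z_n-\eta=(\zeta-y)+(\hat\zeta_n-x)+(\check\zeta_n-\eta)$ that will handle the second marginal. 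Using $\lambda_n=\gamma_n\otimes\lambda^n_{\xi,\eta}$ and the definition of $\zeta_n$, the test integral above rewrites as $\iiiint\langle\Phi(\xi),z_n-\xi\rangle\,d\lambda_n$, which then splits into three summands, each depending on only two or three of the four coordinates.

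Each summand will be shown to vanish by pushing $\lambda_n$ forward onto the relevant marginal and invoking the membership of one of the three pairs $(\gamma,q)$, $(\hat\gamma_n,\hat q_n)$, $(\check\gamma_n,\check q_n)$ in the appropriate $Q$. Concretely: the middle summand reduces to $\iint\langle\Phi(\xi),\hat\zeta_n-\xi\rangle\,d\hat\gamma_n=0$ because $\hat q_n\in Q(\mu,\mu_n)$; the first reduces, by Fubini against $\hat\gamma_x^n(d\xi)$, to $\iint\langle\widetilde\Phi(x),\zeta(x,y)-x\rangle\,d\gamma=0$ with the bounded Borel test function $\widetilde\Phi(x)=\int\Phi(\xi)\,\hat\gamma_x^n(d\xi)$ and $q\in Q(\mu,\nu)$; the third, after additionally disintegrating $\gamma=\nu\otimes\gamma_y$, reduces to $\iint\langle\widetilde\Psi(y),\check\zeta_n-y\rangle\,d\check\gamma_n=0$ with $\widetilde\Psi(y)=\int\gamma_y(dx)\int\Phi(\xi)\,\hat\gamma_x^n(d\xi)$ and $\check q_n\in Q(\nu,\nu_n)$. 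A verbatim symmetric chain, exploiting the split of $z_n-\eta$, dispatches the $\Psi(\eta)$ condition. The main obstacle is not analytic but purely a bookkeeping exercise: keeping straight which marginal of $\lambda_n$ to push forward at each step and verifying that the induced test functions $\widetilde\Phi,\widetilde\Psi$ remain bounded Borel, both of which follow routinely from Fubini and the boundedness of $\Phi,\Psi$.
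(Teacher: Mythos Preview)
Your proof is correct and follows essentially the same route as the paper's: both lift the test integral to $\lambda_n$, split $z_n$ into the three pieces $\hat\zeta_n$, $(\zeta-x)$, $(\check\zeta_n-y)$ (you subtract $\xi$ first, the paper does not, which is a cosmetic regrouping), and then kill each summand by pushing forward onto $\hat\gamma_n$, $\gamma$, $\check\gamma_n$ respectively, introducing the same averaged test functions $\widetilde\Phi(x)=\int\Phi(\xi)\,\hat\gamma_x^n(d\xi)$ and $\widetilde\Psi(y)=\int\widetilde\Phi(x)\,\gamma_y(dx)$ (called $\hat\Phi_n$ and $\tilde\Phi_n$ in the paper). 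Your explicit check of the $\gamma_n$-integrability of $\zeta_n$ is a welcome addition that the paper handles in one line via square-integrability.
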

\begin{proof}
	We know  that $\gamma_n \in \Gamma(\mu_n,\nu_n)$, but the condition $q_n \in Q(\mu_n,\nu_n)$ must be verified. Taking a function $\Phi \in C_b(\Rd;\Rd)$, we check the first marginal. Observe that, since the Borel functions $\zeta,\zeta_n, \hat{\zeta}_n, \check\zeta_n$ are square-integrable with respect to $\lambda_n$, all the integrals below are well defined and finite,
	\begin{align*}
		\iint \big\langle\Phi(\xi) ,q_n(d\xi d\eta)\big\rangle &=	\iint \big\langle\Phi(\xi) , \zeta_n(\xi,\eta) \big\rangle\, \gamma_n(d\xi d\eta)\\
		& = \iint \Big\langle\Phi(\xi) , \iint z_n(x,\xi,y,\eta) \,\lambda^n_{\xi,\eta}(dxdy)\Big\rangle \gamma_n(d\xi d\eta) \\
		& =  	\iiiint \big\langle\Phi(\xi) , \zeta(x,y) + \hat{\zeta}_n(x,\xi) +\check{\zeta}_n(y,\eta) - x-y\big\rangle\, \lambda_n(dxd\xi dy d\eta) \\
		& =\iint \big\langle \Phi(\xi), \hat{\zeta}_n(x,\xi)  \big\rangle \, \hat\gamma_n(dxd\xi)   + \iint\big\langle  \hat{\Phi}_n(x) , \zeta(x,y) -x \big\rangle \, \gamma(dxdy) \\
		& \qquad + \iiint \big\langle \hat{\Phi}_n(x) , \check\zeta_n(y,\eta) -y \big\rangle \, \check{\gamma}_y^n(d\eta)\, \gamma_y(dx) \, \nu(dy) \\
		&= \iint \big\langle \Phi(\xi), \hat{q}_n(dxd\xi)  \big\rangle   + \iint\big\langle  \hat{\Phi}_n(x) , \zeta(x,y) -x \big\rangle \, \gamma(dxdy) \\
		& \qquad + \iint \big\langle \tilde{\Phi}_n(y) , \check\zeta_n(y,\eta) -y \big\rangle \, \check{\gamma}_n(dyd\eta)\\ 
		&= \int \big\langle \Phi(\xi), \xi  \big\rangle \, \mu_n(d\xi),
	\end{align*}
	where  we used the disintegration $\gamma(dxdy) = \gamma_y(dx) \otimes \nu(dy)$ and we introduced the bounded Borel measurable  vector functions  $\hat{\Phi}_n(x) =  \int \Phi(\xi)\,\hat{\gamma}_x^n(d\xi)$ and   $\tilde{\Phi}_n(y) =  \int \hat{\Phi}_n(x)\, {\gamma}_y(dx)$. The last equality acknowledges that: $\hat{\zeta}_n \hat{\gamma}_n =\hat{q}_n \in Q(\mu,\mu_n)$, \ $\zeta \gamma = q \in Q(\mu,\nu)$, and $\check{\zeta}_n \check{\gamma}_n = \check{q}_n \in Q(\nu,\nu_n)$, cf. \eqref{eq:moment-martingale}. Checking the second marginal is similar.
\end{proof}

\begin{lemma}
	The sequences $\gamma_n, q_n$ weakly converge to $\gamma, q$, respectively.
\end{lemma}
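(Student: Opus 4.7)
The strategy is to first establish weak convergence of the gluing measure $\lambda_n$ on $(\Rd)^4$, then push it forward to recover the convergences of $\gamma_n$ and $q_n$. Tightness of $\{\lambda_n\}$ follows from the fact that $\pi_{1,3}\#\lambda_n=\gamma$ is fixed, while $\pi_2\#\lambda_n=\mu_n$ and $\pi_4\#\lambda_n=\nu_n$ converge in $Z_2$ and hence are tight. The key quantitative input comes from \eqref{eq:Z2n_a}: since $|x-\xi|^2\le 2(|\hat\zeta_n-x|^2+|\hat\zeta_n-\xi|^2)$, we obtain
\begin{equation*}
    \iint |x-\xi|^2\,d\hat\gamma_n \ \le\  4\,Z_2(\mu,\mu_n) \ \to\  0,
\end{equation*}
and analogously $\iint |y-\eta|^2\,d\check\gamma_n \le 4\,Z_2(\nu,\nu_n) \to 0$ from \eqref{eq:Z2n_b}. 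In view of the disintegration defining $\lambda_n$, this furnishes $\iiiint (|\xi-x|^2+|\eta-y|^2)\,d\lambda_n\to 0$.

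Let $\lambda$ be any weak subsequential limit of $\lambda_n$. Then $\pi_{1,3}\#\lambda=\gamma$, and the lower semi-continuity of the integral against the continuous nonnegative integrand $|\xi-x|^2+|\eta-y|^2$ yields $\iiiint(|\xi-x|^2+|\eta-y|^2)\,d\lambda=0$. Hence $\xi=x$ and $\eta=y$ \ $\lambda$-a.s., so the limit is uniquely characterized as $\lambda(dxd\xi dyd\eta)=\gamma(dxdy)\otimes\delta_x(d\xi)\otimes\delta_y(d\eta)$. This uniqueness promotes the subsequential convergence to $\lambda_n\rightharpoonup\lambda$ for the whole sequence, and pushing forward under $\pi_{2,4}$ gives immediately $\gamma_n=\pi_{2,4}\#\lambda_n\rightharpoonup\pi_{2,4}\#\lambda=\gamma$.

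For $q_n\rightharpoonup q$, we would test against $\Phi\in C_b(\Rd\times\Rd;\Rd)$ and unfold the definitions of $q_n$, $\zeta_n$, $z_n$ and $\lambda_n$:
\begin{equation*}
    \iint \pairing{\Phi(\xi,\eta),q_n(d\xi d\eta)} = \iiiint \pairing{\Phi(\xi,\eta),\zeta(x,y)}\,d\lambda_n + \iiiint \pairing{\Phi(\xi,\eta),\hat\zeta_n-x}\,d\lambda_n + \iiiint \pairing{\Phi(\xi,\eta),\check\zeta_n-y}\,d\lambda_n.
\end{equation*}
The last two terms are bounded in absolute value by $\|\Phi\|_\infty\sqrt{2\,Z_2(\mu,\mu_n)}$ and $\|\Phi\|_\infty\sqrt{2\,Z_2(\nu,\nu_n)}$ via Cauchy--Schwarz applied to the marginals $\hat\gamma_n,\check\gamma_n$, and hence vanish. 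For the first term we split $\pairing{\Phi(\xi,\eta),\zeta(x,y)}=\pairing{\Phi(x,y),\zeta(x,y)}+\pairing{\Phi(\xi,\eta)-\Phi(x,y),\zeta(x,y)}$: the principal part integrates to $\iint\pairing{\Phi,\zeta}\,d\gamma=\iint\pairing{\Phi,dq}$ thanks to $\pi_{1,3}\#\lambda_n=\gamma$, while the remainder is handled via a cut-off at $\{|\zeta|\le M\}$, using the uniform integrability of $|\zeta|$ under $\gamma$ combined with the fact that $(x,\xi,y,\eta)\mapsto|\Phi(\xi,\eta)-\Phi(x,y)|$ is continuous bounded and vanishes $\lambda$-a.s., so its integral against $\lambda_n$ converges to zero. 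The main obstacle is precisely this last step: $\zeta$ is merely $L^2(\gamma)$, hence generally not continuous, so one cannot feed it directly into the weak convergence $\lambda_n\rightharpoonup\lambda$; the $M$-truncation combined with uniform integrability is the standard workaround that closes the argument.
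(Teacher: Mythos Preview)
Your proof is correct and follows a route that differs from the paper's in the main technical device. The paper proceeds by disintegration: from $\iiiint(|\xi-x|^2+|\eta-y|^2)\,d\lambda_n\to 0$ it extracts a subsequence along which the kernels satisfy $\hat\gamma_x^{n_k}\otimes\check\gamma_y^{n_k}\rightharpoonup\delta_{(x,y)}$ for $\gamma$-a.e.\ $(x,y)$, and then handles the principal term $\iiiint\pairing{\Phi(\xi,\eta),\zeta(x,y)}\,d\lambda_{n_k}$ by rewriting it as $\iint\big\langle\int\!\!\int\Phi\,d(\hat\gamma_x^{n_k}\otimes\check\gamma_y^{n_k}),\zeta(x,y)\big\rangle\,d\gamma$ and invoking dominated convergence with dominator $\|\Phi\|_\infty|\zeta|\in L^1_\gamma$. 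The usual sub-subsequence argument then recovers the full sequence. You instead stay at the level of the joint measure $\lambda_n$ on $(\Rd)^4$: tightness, identification of the unique limit $\lambda=\gamma\otimes\delta_x\otimes\delta_y$ via lower semi-continuity, and then a truncation in $|\zeta|$ to compensate for the fact that $\zeta$ cannot be fed directly into weak convergence. Both arguments are clean; the paper's buys a one-line DCT for the principal term at the price of the a.e.\ subsequence extraction, while yours avoids disintegration entirely but needs the explicit $M$-cutoff together with the tail control $\int_{|\zeta|>M}|\zeta|\,d\gamma\to 0$ (which holds since $\zeta\in L^2_\gamma$).
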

\begin{proof}
		It is clear that sequences $\hat{\gamma}_n$ and $\check{\gamma}_n$ converge weakly to $(x,x) \# \mu$ and $(y,y)\# \nu$, but we need a stronger result. We shall prove that we can always extract a subsequence $(\hat{\gamma}_{n_k},\check{\gamma}_{n_k})$ such that,
	\begin{equation}
		\label{eq:lim_delta}
		\hat\gamma_{x}^{n_k} \otimes \check\gamma_{y}^{n_k}  \rightharpoonup \delta_{(x,y)} \qquad \text{for $\gamma$-a.e. $(x,y)$}.
	\end{equation}
	To that end, we use  \eqref{eq:Z2n_a},  \eqref{eq:Z2n_b}. By convexity we have $\abs{c-a}^2 +\abs{c-b}^2 \geq \frac{1}{2} \abs{a-b}^2$,  hence
	\begin{align*}
		Z_2(\mu,\mu_{n}) + Z_2(\nu,\nu_{n}) & \geq \iint \frac{1}{4} | x-\xi|^2   \hat\gamma_{n}(dxd\xi) +   \iint \frac{1}{4} |y-\eta|^2   \check\gamma_{n}(dyd\eta) \\
		& = \frac{1}{4} \iiiint  \Big(  | x-\xi|^2 + |y-\eta|^2  \Big) \lambda_{n}(dxd\xi dy d\eta) \\
		& = \frac{1}{4} \iint \left( \iint  \Big(  | x-\xi|^2 + |y-\eta|^2  \Big) (\hat\gamma_{x}^{n} \otimes \check\gamma_{y}^{n}) (d\xi d\eta) \right)  \gamma(dxdy)
	\end{align*}
	Since 	$Z_2(\mu,\mu_{n})$ and  $Z_2(\nu,\nu_{n})$ converge to zero, the function $(x,y) \mapsto \iint  \big(  | x-\xi|^2 + |y-\eta|^2  \big) \,(\hat\gamma_{x}^{n} \otimes \check\gamma_{y}^{n})(d\xi d\eta)$ converges to zero in $L^1_\gamma(\Rd \times \Rd)$. Thus, we can select a sequence of indices $n_k$ for which the latter integral converges to zero for $\gamma$-a.e. $(x,y)$. This means that $W_2(\hat\gamma_{x}^{n_k} \otimes \check\gamma_{y}^{n_k},\delta_{(x,y)}) \to 0$ in the space $\mathcal{P}_2((\Rd)^2)$ for $\gamma$-a.e. $(x,y)$,  and so, in particular, \eqref{eq:lim_delta}~holds true. With this convergence at hand, the fact that $\gamma_{n_k} \rightharpoonup \gamma$ follows easily. Indeed, for a function $\varphi \in C_b(\Rd \times \Rd;\R)$, we have,
	\begin{align*}
		&\iint \varphi(\xi,\eta)  \gamma_{n_k}(d\xi d\eta) = \iiiint \varphi(\xi,\eta)  \lambda_{n_k}(dxd\xi dy d\eta) \\
		& \qquad \qquad  = 	\iint \left( \iint\varphi(\xi,\eta) \,(\hat\gamma_{x}^{n_k} \otimes \check\gamma_{y}^{n_k}) (d\xi d\eta) \right)\! \gamma(dxdy)  \xrightarrow[k \to \infty]{} \	\iint \varphi(x,y)  \gamma(dx dy),
	\end{align*}
	where we used \eqref{eq:lim_delta} and the dominated convergence theorem. Next, for a vector function $\Phi \in C_b(\Rd \times \Rd;\Rd)$ we have,
	\begin{align*}
		\iint \big\langle\Phi(\xi,\eta),q_{n_k}(d\xi d\eta)\big\rangle  &=  	\iiiint \big\langle\Phi(\xi,\eta) , \zeta(x,y) + \hat{\zeta}_{n_k}(x,\xi) +\check{\zeta}_{n_k}(y,\eta) - x-y\big\rangle\, d\lambda_{n_k} \\
		& = \iint \bigg\langle \iint  \Phi(\xi,\eta) \,(\hat\gamma_{x}^{n_k} \otimes \check\gamma_{y}^{n_k}) (d\xi d\eta) \, , \, \zeta(x,y) \bigg\rangle    \gamma(dxdy)\\
		& \qquad +  \iiiint \big\langle\Phi(\xi,\eta), \hat{\zeta}_{n_k}(x,\xi) - x \big\rangle \lambda_{n_k}(dxd\xi dy d\eta) \\
		& \qquad \quad + \iiiint \big\langle\Phi(\xi,\eta), \check{\zeta}_{n_k}(y,\eta) - y \big\rangle \lambda_{n_k}(dxd\xi dy d\eta).
	\end{align*}
	Using \eqref{eq:lim_delta} again, we show that the first integral after the last equality sign converges to $\iint \big\langle\Phi(x,y),\zeta(x,y) \big\rangle \gamma(dxdy)$,  being equal to $\iint \big\langle\Phi(x,y), q(dxdy)\big\rangle$. Meanwhile, using \eqref{eq:Z2n_a} we estimate the second integral, starting with the Cauchy--Schwarz inequality, 
	\begin{align*}
		&\left| \iiiint \big\langle\Phi(\xi,\eta), \hat{\zeta}_{n_k}(x,\xi) - x \big\rangle \lambda_{n_k}(dxd\xi dy d\eta) \right| \\
		& \qquad  \leq  \left(\iiiint \abs{\Phi(\xi,\eta)}^2 \lambda_{n_k}(dxd\xi dy d\eta)\right)^{\frac{1}{2}} \left( \iint \big|\hat{\zeta}_{n_k}(x,\xi) - x \big|^2 \hat{\gamma}_{n_k}(dx d\xi )\right)^{\frac{1}{2}} \\
		& \qquad  \leq \sqrt{2} \, \norm{\Phi     }_{L^\infty} \Big(Z_2(\mu,\mu_{n_k})\Big)^{\frac{1}{2}},
	\end{align*}
	Similarly, the third integral can be bounded by $\sqrt{2} \, \norm{\Phi}_{L^\infty} (Z_2(\nu,\nu_{n_k}))^{\frac{1}{2}}$. Since $Z_2(\mu,\mu_{n_k}) \to 0$ and $Z_2(\nu,\nu_{n_k}) \to 0$, this establishes the convergence $q_{n_k} \rightharpoonup q$.
	
	We have showed that the desired convergence holds on a subsequence. However, a convergent sub-subsequence could be extracted from any subsequence of $(\gamma_n,q_n)$. Since the limits are always the same and equal $(\gamma,q)$, the convergence must hold for the whole sequence as well. 
\end{proof}

The $\limsup$ condition can be readily proven.

\begin{proof}[Proof of the lim\,sup inequality]
	For every $n$ we have,
	\begin{align*}
		F_n(\gamma_n,q_n) &=  \iint\bigg( c (\xi,\eta) + \frac{1}{2\eps_n} \Big(\big|\zeta_n(\xi,\eta)\big|^2 - \mathcal{C}(\mu_n,\nu_n) \Big) \bigg) \gamma_n(d\xi d\eta) \\
		&=  \iint\bigg( c (\xi,\eta) + \frac{1}{2\eps_n} \left|\iint z_n(x,\xi,y,\eta) \, \lambda_{\xi,\eta}^n(dxdy) \right|^2  \bigg) \gamma_n(d\xi d\eta)  -  \frac{1}{2\eps_n}  \mathcal{C}(\mu_n,\nu_n) \\
		& \leq \iiiint\bigg( c (\xi,\eta) + \frac{1}{2\eps_n} \big| z_n(x,\xi,y,\eta)  \big|^2 \bigg) \lambda_n(dx d\xi dy d\eta) -  \frac{1}{2\eps_n}  \mathcal{C}(\mu_n,\nu_n),
	\end{align*}
	where to pass to the last line we used Jensen's inequality. To continue, we expand the formula for $z_n$, but first we also make use of the formula \eqref{eq:CZ}  and the fact that $Z_2$ is a metric in order to make the following estimate,
	\begin{align*}
		\mathcal{C}(\mu_n,\nu_n) &= Z_2(\mu_n,\nu_n) + \tfrac{m_2(\mu_n) +m_2(\nu_n)}{2} \\
		&\geq Z_2(\mu,\nu) - Z_2(\mu,\mu_n) - Z_2(\nu,\nu_n) +\tfrac{m_2(\mu_n) +m_2(\nu_n)}{2}  \\
		&\geq \mathcal{C}(\mu,\nu) - Z_2(\mu,\mu_n) - Z_2(\nu,\nu_n)  - \tfrac{m_2(\mu) -m_2(\mu_n)}{2} - \tfrac{m_2(\nu) -m_2(\nu_n)}{2}.
	\end{align*}
	Carrying on, we obtain,
	\begin{align*}
		&F_n(\gamma_n,q_n) \leq  \iiiint\bigg( c (\xi,\eta) + \frac{1}{2\eps_n} \Big(\Big|  \zeta(x,y) + \big(\hat{\zeta}_n(x,\xi) -x\big) + \big(\check{\zeta}_n(y,\eta) -  y\big)  \Big|^2  \Big) \bigg) d\lambda_n \\
		& \qquad \qquad \qquad+ \frac{1}{2\eps_n} \bigg[ - \mathcal{C}(\mu,\nu) + Z_2(\mu,\mu_n) + Z_2(\nu,\nu_n) +\tfrac{m_2(\mu) -m_2(\mu_n)}{2} + \tfrac{m_2(\nu) -m_2(\nu_n)}{2}  \bigg] \\
		& \qquad = \iint c(\xi,\eta) \,\gamma_n(d\xi d\eta) + \frac{1}{2\eps_n} \bigg[ \iint \abs{\zeta(x,y)}^2 \gamma(dxdy) + \iint |\hat\zeta_n(x,\xi)-x|^2 \hat\gamma_n(dxd\xi)    \\
		& \qquad  \quad + \iint |\check\zeta_n(y,\eta)-y|^2 \check\gamma_n(dyd\eta) + 2 \iiiint \big\langle\zeta(x,y),\hat\zeta_n(x,\xi)-x \big\rangle \lambda_n(dxd\xi dy d\eta)   \\
		&  \qquad  \quad \quad + 2 \iiiint \big\langle\zeta(x,y),\check\zeta_n(y,\eta)-y \big\rangle \lambda_n(dxd\xi dy d\eta)  \\
		&  \qquad  \quad  \quad \quad+  2 \iiiint \big\langle\hat\zeta_n(x,\xi)-x,\check\zeta_n(y,\eta)-y \big\rangle \lambda_n(dxd\xi dy d\eta)\\
		&  \qquad  \quad \quad \quad \quad  - \mathcal{C}(\mu,\nu) + Z_2(\mu,\mu_n) + Z_2(\nu,\nu_n) +\tfrac{m_2(\mu) -m_2(\mu_n)}{2} + \tfrac{m_2(\nu) -m_2(\nu_n)}{2}  \bigg].
	\end{align*}
	Let us now compute or bound the integrals in the square brackets. From \eqref{eq:opt_gammaq} we know that $\iint \abs{\zeta(x,y)}^2 \gamma(dxdy)  = \mathcal{C}(\mu,\nu)$, while, following the steps of \eqref{eq:mix_term}, we obtain by optimality of $(\hat{\gamma}_n,\hat{\zeta}_n\hat{\gamma}_n)$ and $(\check{\gamma}_n,\check{\zeta}_n\check{\gamma}_n)$,
	\begin{align*}
		&\iint |\hat\zeta_n(x,\xi)-x|^2 \hat\gamma_n(dxd\xi) = \mathcal{C}(\mu,\mu_n) - m_2(\mu) = Z_2(\mu,\mu_n) + \tfrac{m_2(\mu_n) - m_2(\mu)}{2}, \\
		&  \iint |\check\zeta_n(y,\eta)-y|^2 \check\gamma_n(dyd\eta) =  \mathcal{C}(\nu,\nu_n) - m_2(\nu) = Z_2(\nu,\nu_n) + \tfrac{m_2(\nu_n) - m_2(\nu)}{2},
	\end{align*}
	where we also used the formula \eqref{eq:CZ}.
	Next, we find that,
	\begin{align*}
		&\iiiint \!\! \big\langle\zeta(x,y),\hat\zeta_n(x,\xi)-x \big\rangle d\lambda_n  =\!\! 	\iiiint\!\!\big\langle\zeta(x,y),\hat\zeta_n(x,\xi)-x \big\rangle \check\gamma^n_y(d\eta) \,\hat\gamma^n_x(d\xi) \gamma(dxdy) \\
		&= \!\!	\iiint\!\! \big\langle\zeta(x,y),\hat\zeta_n(x,\xi)-x \big\rangle \hat\gamma^n_x(d\xi) \gamma(dxdy) = \!\!	\iiint\!\! \big\langle\zeta(x,y),\hat\zeta_n(x,\xi)-x \big\rangle \hat\gamma^n_x(d\xi) \gamma_x(dy) \mu(dx) \\
		&  = \!	\iint \! \big\langle x,\hat\zeta_n(x,\xi)-x \big\rangle 
		\hat\gamma^n_x(d\xi)  \mu(dx) = \! \iint \! \big\langle x,\hat\zeta_n(x,\xi)-x \big\rangle \hat{\gamma}_n(dxd\xi) = 0.
	\end{align*}
	Above we used the disintegration $\gamma(dxdy) = \mu(dx) \otimes \gamma_x(dy)$, and then we passed to the last line by recalling that $\int \zeta(x,y) \, \gamma_x(dy) = x$, see \eqref{eq:2xmartingale_condition}.
	The last equality is by the fact that $\hat{\zeta}_n \hat{\gamma}_n \in Q(\mu,\mu_n)$, see \eqref{eq:moment-martingale}.  By a similar token we get,
	\begin{equation*}
		\iiiint \big\langle\zeta(x,y),\check\zeta_n(y,\eta)-y \big\rangle \lambda_n(dxd\xi dy d\eta) = 0.
	\end{equation*}
	Finally, starting with  Cauchy-Schwarz inequality and then using \eqref{eq:Z2n_a}, \eqref{eq:Z2n_b}, we estimate,
	\begin{align*}
		&\abs{\iiiint \big\langle\hat\zeta_n(x,\xi)-x,\check\zeta_n(y,\eta)-y \big\rangle \,d\lambda_n}  \\
		& \qquad \leq 	\left(\iiiint \big|\hat\zeta_n(x,\xi)-x\big|^2\,d\lambda_n \right)^{\frac{1}{2}} \left( \iiiint \big|\check\zeta_n(y,\eta)-y\big|^2\,d\lambda_n \right)^{\frac{1}{2}}  \\
		& \qquad =	2\left(\iint \frac{1}{2}\big|\hat\zeta_n(x,\xi)-x\big|^2\,d\hat{\gamma}_n \right)^{\frac{1}{2}} \left( \iint \frac{1}{2} \big|\check\zeta_n(y,\eta)-y\big|^2\,d\check{\gamma}_n \right)^{\frac{1}{2}}  \\
		&\qquad\leq 2 \big(Z_2(\mu,\mu_n) \big)^{\frac{1}{2}} \big(Z_2(\nu,\nu_n) \big)^{\frac{1}{2}} \leq Z_2(\mu,\mu_n) +Z_2(\nu,\nu_n),
	\end{align*}
	where at the end we used Young's inequality. Upon plugging the above equalities and estimates into the chain of inequalities, every term in the bracket, apart from $Z_2(\mu,\mu_n), \, Z_2(\nu,\nu_n)$, cancels, which allows to conclude that,
	\begin{equation*}
		\limsup_{n \to +\infty} F_n(\gamma_n,q_n) \leq   \limsup_{n \to +\infty} \left\{\iint c(\xi,\eta) \,\gamma_n(d\xi d\eta)+ \frac{2}{\eps_n} \Big[ Z_2(\mu,\mu_n)+Z_2(\nu,\nu_n)\Big] \right\}.
	\end{equation*}
	To prove the convergence of the integral $\iint c \,d\gamma_n$ we note that
	the convergences $\mu_n \to \mu$, $\nu_n \to \nu$ in the metric $Z_2$ guarantee that the second moments $m_2(\mu_n)$, $m_2(\nu_n)$ converge, in particular they are bounded. In turn, this means that the sequence $\{\gamma_n\} \subset \Gamma(\mu_n,\nu_n)$ has uniformly bounded second moments as well. Then, the convergence of $\iint c \,d\gamma_n \to \iint c \,d\gamma$ follows by weak convergence of $\gamma_n \rightharpoonup \gamma$ and the $p$-growth of $c$ with $p<2$, see \cite[Section 5.1.1]{ambrosio2008}. Finally, the assumed rate of decrease of $\eps_n$ in \eqref{eq:rate_epsn} furnishes,
	\begin{equation*}
		\limsup_{n \to +\infty} F_n(\gamma_n,q_n) \leq  \iint c(x,y)\, \gamma(dxdy).
	\end{equation*}
	Since $(\gamma,q) \in \ov{\Gamma Q}(\mu,\nu)$, the latter integral is $F(\gamma,q)$ exactly, which concludes the proof.
\end{proof}

\section{Examples}

\label{sec:examples}

\subsection{Non-uniqueness of the Zolotarev projection}

The purpose of the first example is to show that the Zolotarev projection  can be non-unique, i.e. that the non-empty set $\Pi_{\succeq_c \mu}(\nu)$ may not be a singleton for some pair of probability distributions $\mu,\nu \in \mathcal{P}_2^\ba(\Rd)$. Recall that Proposition \ref{prop:Zol_proj} states that this set is identical to the set of convex dominants of minimum second moment. We can thus concentrate on proposing data such that \eqref{eq:minDom} has non-unique minimizers instead.

The following is the only example in this work with exact solution, not counting the 1D case in which the optimal convex dominance problem has a universal solution $\mu \vee \nu$, cf. Proposition \ref{prop:1D}. We refer to  \cite{bolbotowski2024kantorovich} where the quadratic case was solved analytically for  two type of data $\mu,\nu$: Gaussians (any dimensions $d \geq 1$) and two-point measures ($d=2$). For the latter case, Fig. \ref{fig:bimart} in the introduction herein depicts the optimal solution.

%

\begin{example}
	\label{ex:non-uniqueness}
	Consider  the planar optimal convex dominant problem for the quadratic cost $f = \abs{\argu}^2$ and the probability measures
	$\mu = \frac{1}{2} \mu_1 + \frac{1}{2} \mu_2$, \ $\nu = \frac{1}{2} \nu_1 + \frac{1}{2} \nu_2$ where:
	$\mu_1  = \frac{1}{2} \delta_{(-1,0)} + \frac{1}{2} \delta_{(1,0)}$, \ $\mu_2 = \frac{1}{2} \delta_{(-2,0)} + \frac{1}{2} \delta_{(2,0)}$, and $\nu_1  = \frac{1}{2} \delta_{(0,-1)} + \frac{1}{2} \delta_{(0,1)}$, \ $\nu_2 = \frac{1}{2} \delta_{(0,-2)} + \frac{1}{2} \delta_{(0,2)}$, see Fig. \ref{fig:non-unique}. The following plans are elements of $\Gamma(\mu,\nu)$,
	\begin{equation*}
		\ov\gamma = \frac{1}{2} \, \mu_1 \otimes \nu_1 + \frac{1}{2}\,  \mu_2 \otimes \nu_2, \qquad 	\ov\gamma' = \frac{1}{2} \, \mu_1 \otimes \nu_2 + \frac{1}{2}\,  \mu_2 \otimes \nu_1,
	\end{equation*}
	and they are bi-martingale with respect to the coupling map $\ov\zeta(x,y) = x+y$. Indeed, the conditions \eqref{eq:2xmartingale_condition} can be easily checked. For instance, $\ov\gamma(dxdy) = \mu(dx) \otimes \ov\gamma_x(dy)$ where $\ov\gamma_x = \nolinebreak \nu_i$ for $\mu_i$-a.e. $x$, $i=1,2$. Then, $	\int \ov\zeta(x,y) \, \gamma_x(dy)  = \int (x+y) \, \gamma_x(dy)= x$ since $[\nu_i] = 0$.  The second condition in \eqref{eq:2xmartingale_condition} is verified similarly, and the same reasoning works for $\ov\gamma'$ as well.
		\begin{figure}[h]
		\centering
		\subfloat[]{\includegraphics*[trim={0cm 0cm -0cm -0cm},clip,width=0.2\textwidth]{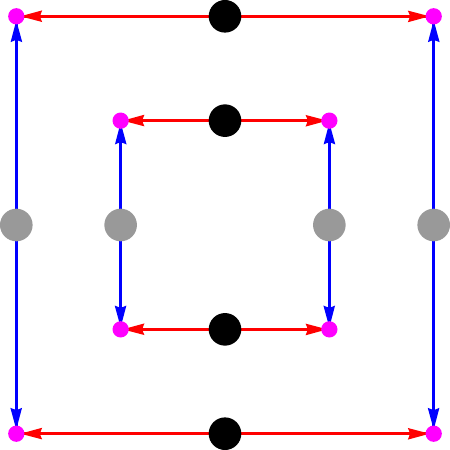}}\hspace{0.7cm}
		\subfloat[]{\includegraphics*[trim={0cm 0cm -0cm -0cm},clip,width=0.2\textwidth]{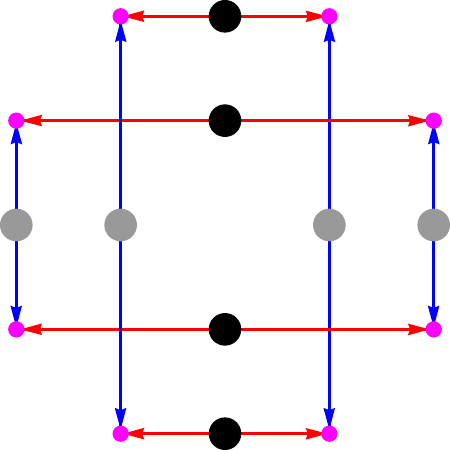}}\hspace{0.7cm}
		\subfloat[]{\includegraphics*[trim={0cm 0cm -0cm -0cm},clip,width=0.2\textwidth]{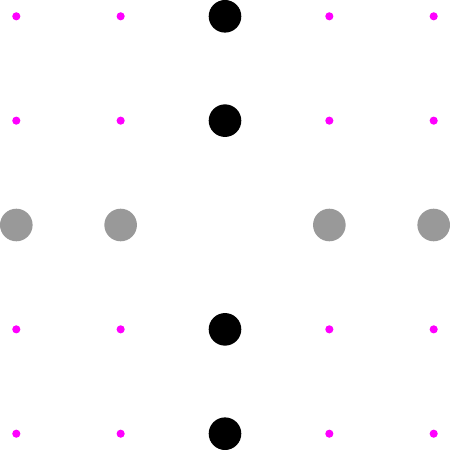}}\hspace{0.7cm}
		\subfloat[]{\includegraphics*[trim={0cm 0cm -0cm -0cm},clip,width=0.2\textwidth]{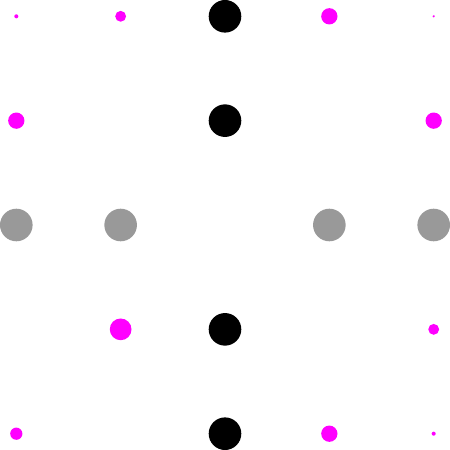}}
		
		\caption{Data $\mu$ (gray), $\nu$ (black), the optimal convex dominant $\rho$ (magenta), and the associated martingale plans $\gamma_1 \in \Gamma_{\mathrm{M}}(\mu,\rho)$ (blue), $\gamma_2 \in \Gamma_{\mathrm{M}}(\nu,\rho)$ (red). Subsequent figures display different solutions.}
		\label{fig:non-unique}      
	\end{figure}
	
	Let us now show that both the pairs $(\ov\gamma,\ov{q}) = (\ov\gamma,\ov\zeta \ov\gamma)$ and $(\ov\gamma,\ov{q}') = (\ov\gamma',\ov\zeta \ov\gamma')$ are solutions to the two equivalent problems in Theorem \ref{thm:RK_reinvented}. First, a straightforward computation of the energy $\iint \frac{1}{2} \big(\abs{\ov{\zeta}(x,y)-x}^2  + \abs{\ov{\zeta}(x,y)-y}^2 \big) \,\gamma(dxdy)$ for $\gamma = \ov\gamma, \ov\gamma'$ yields the same value $\frac{1}{2} m_2(\mu) + \frac{1}{2} m_2(\nu) = \frac{5}{2}$. In particular, this furnishes the upper bound $Z_2(\mu,\nu) \leq \frac{1}{2} m_2(\mu) + \frac{1}{2} m_2(\nu) $. On the other hand, consider the smooth potential,
	\begin{equation*}
		\label{eq:optu_orthogonal}
		\ov{u}(x) = \frac{1}{2} \pairing{e_1,x}^2 - \frac{1}{2} \pairing{e_2,x}^2,
	\end{equation*}
	where $e_1 = (1,0)$, $e_2 =(0,1)$. Observe that $\nabla \ov{u} : \Rd \to \Rd$ is an isometry, thus in particular $\mathrm{lip}(\nabla \ov{u}) = 1$, rendering $\ov{u}$ a competitor in the Zolotarev problem \eqref{eq:Z2}. The lower bound follows: $Z_2(\mu,\nu) \geq \int \ov{u} \,d(\mu-\nu)$. Since $\mu$ and $\nu$ are supported on the lines $\{t e_1 : t \in \R\}$ and  $\{te_2: t \in \R\}$, respectively, the latter integral equals $\frac{1}{2} m_2(\mu) + \frac{1}{2} m_2(\nu)$. Combining the two bounds shows that $Z_2(\mu,\nu) = \frac{1}{2} m_2(\mu) + \frac{1}{2} m_2(\nu)$, which establishes optimality of the pairs $(\ov\gamma,\ov\zeta \ov\gamma)$ and  $(\ov\gamma',\ov\zeta \ov\gamma')$, along with optimality of $\ov{u}$ in the Zolotarev problem.
	
	By virtue of Theorem \ref{thm:RK_reinvented}, the two optimal bi-martingale plans lead to two convex dominants of $\mu,\nu$ with the least second moment, i.e. solving the problem \eqref{eq:minDom},
	\begin{align*}
		&\ov\rho = \ov\zeta \# \ov\gamma = \frac{1}{2} \,\mu_1 * \nu_1 +  \frac{1}{2} \,\mu_2 * \nu_2 = \frac{1}{8} \sum_{\substack{i,j \in \{-1,1\} }} \big(\delta_{(i,j)} + \delta_{(2i,2j)}\big),  \\
		&\ov\rho' = \ov\zeta \# \ov\gamma' = \frac{1}{2} \,\mu_1 * \nu_2 +  \frac{1}{2} \,\mu_2 * \nu_1 = \frac{1}{8} \sum_{\substack{i,j \in \{-1,1\} }} \big(\delta_{(i,2j)} + \delta_{(2i,j)}\big),
	\end{align*}
	where $*$ is the convolution of measures. From the same theorem we also deduce that both of them are Zolotarev projections of $\mu$ or $\nu$ on the respective cones of dominating measures, namely $\ov\rho, \ov\rho' \in \Pi_{\succeq_c \mu}(\nu) = \Pi_{\succeq_c \nu}(\mu)$. We have thus showed that the Zolotarev projections, and the minimal convex dominant, are not uniquely determined, which was announced in Remark \ref{rem:conv_dom}(d) and Theorem \ref{thm:Zol_proj_again}.
	
	The probabilities $\ov\rho$ and $\ov\rho'$ are shown in Fig. \ref{fig:non-unique}(a) and Fig. \ref{fig:non-unique}(b), respectively, along with the associated martingale plans $\ov\gamma_i = (\pi_i,\ov\zeta) \# \ov\gamma$ and $\ov\gamma'_i = (\pi_i,\ov\zeta) \# \ov\gamma'$, where $i=1,2$. Naturally, every convex combination of $\ov\rho$ and $\ov\rho'$ yields another optimal dominant. In particular, the solution $\frac{1}{2}\ov\rho + \frac{1}{2} \ov\rho' = \mu * \nu$ is shown in Fig. \ref{fig:non-unique}(c).  In fact, the set of optimal dominants is even richer: it can be shown to be a convex set of dimension 4, cf. yet another solution in Fig. \ref{fig:non-unique}(d).
\end{example}

\begin{remark}
	The optimal coupling map $\ov\zeta(x,y) = x+y$ is in agreement with the condition (ii) in Theorem \ref{thm:RK_reinvented}. Indeed, for $\ov{u}$ in \eqref{eq:optu_orthogonal} the formula \eqref{eq:ovzeta} gives $\ov\zeta(x,y) = \pairing{e_1,x} e_1 + \pairing{e_2,y} e_2$. The latter expression equals to $x+y$ on the set $\spt \mu \otimes \spt \nu \subset  \{t e_1 : t \in \R\} \otimes  \{t e_2: t \in \R\}$.
\end{remark}


\subsection{Optimal convex dominance in two dimensions -- simulations}
\label{ssec:conic}

As advertised in the introduction, the bi-martingale formulation $(\mathrm{M^2OT})$ paves ways to efficient numerical methods. The simplest strategy is to use off-the-shelf software for non-linear convex optimization. In this subsection we shortly demonstrate the computational potential of $(\mathrm{M^2OT})$ for the optimal convex dominance problem with the power cost, i.e. in the scenario $c(x,y,z) = f(z) = \abs{z}^p$ for $p \in (1,+\infty)$. Assuming that $\mu,\nu$ are discrete,  we can rewrite $(\mathrm{M^2OT})$ as a finite dimensional convex conic optimization problem.
 
For $\mu = \sum_i \mu_i \delta_{x_i}$, $\nu = \sum_j \nu_j \delta_{y_j}$ on the plane $\R^2$, the variables in $(\mathrm{M^2OT})$ are simply 
$\gamma = \sum_{i,j} \gamma_{ij} \delta_{(x_i,y_j)}$ and $q = \sum_{i,j} \big(q_{ij}^{(1)}, q_{ij}^{(2)}\big) \delta_{(x_i,y_j)}$, where $\gamma_{ij}, q_{ij}^{(1)}, q_{ij}^{(2)}$ are numbers. Let us add the fourth variable $r_{ij}$ and, for each $i,j$, consider the convex cone in $\R^4$ satisfying,
\begin{equation*}
	(r_{ij})^{1/p} (\gamma_{ij})^{1-1/p} \geq \sqrt{\big(q_{ij}^{(1)}\big)^2 +\big(q_{ij}^{(2)}\big)^2}, \qquad r_{ij} \geq 0, \ \ \gamma_{ij} \geq 0.
\end{equation*}
Note that this constraint guarantees that $q \ll \gamma$. Naturally, the condition $(\gamma,q) \in \Gamma(\mu,\nu)$ becomes a linear system of algebraic equations. The total cost satisfies the inequality
$\iint |\frac{dq}{d\gamma}|^p d\gamma \leq \sum_{i,j} r_{ij}$, which becomes an equality at optimality. This conic form of the problem is ready to be tackled by standard convex optimization algorithms. For our simulations below, we have used the MOSEK solver \cite{mosek2024}. After we have found the optimal solution $\hat{\gamma}_{ij}, \hat{q}_{ij}^{(1)}\!, \,\hat{q}_{ij}^{(2)}$, the optimal convex dominant for the cost $f = \abs{\argu}^p$ can be computed as,
\begin{equation*}
	\hat\rho = \hat\zeta \# \hat\gamma = \sum_{i,j} \hat\gamma_{ij} \, \delta_{\left(\hat{q}_{ij}^{(1)}\!, \,\hat{q}_{ij}^{(2)}\right)/ \hat\gamma_{ij}} .
\end{equation*}

It must be stressed that, if $\mu,\nu$ are discrete and the finite dimensional conic problem is solved exactly, then $\hat{\rho}$ above is an exact solution of the optimal convex dominance problem \eqref{eq:conv_dom_intro}, see also Remark \ref{rem:conv_dom}(b). Hence, in the numerical simulations below, the solutions are as accurate as is the MOSEK's interior point algorithm.

\begin{example}
		\label{ex:conv_dom}
		Consider the data $\mu = \mathcal{L}^2 \mres Q$, being the Lebesgue measure restricted to a unit square $Q \subset \R^2$ centred at the origin, and $\nu = \sum_{j=1}^5 \frac{1}{5} \delta_{y_j}$, where $y_1 = (\frac{2}{5},0)$, $y_2 = (-\frac{2}{5},0)$, $y_3 = (0,\frac{2}{5})$, $y_4 =(0,-\frac{2}{5})$, $y_5 = (0,0)$, see Fig. \ref{fig:diff_p}. In the simulations, we shall discretize $\mu$ with a regular mesh of $121 \times 121$ points (we keep the symbol $\mu$).
		
		\begin{figure}[h]
			\centering
			\subfloat[$p=1 + 1\text{e-}3$]{\includegraphics*[trim={0cm 0cm -0cm -0cm},clip,width=0.3\textwidth]{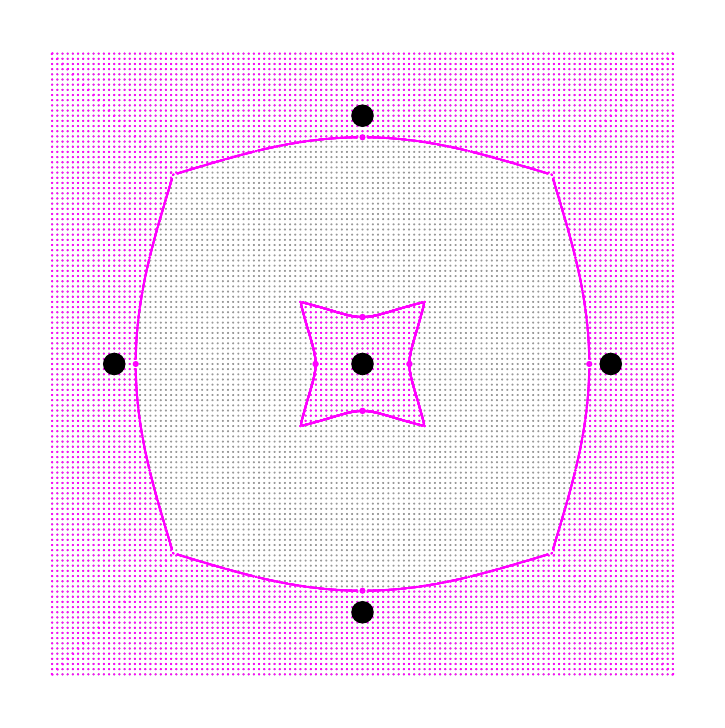}}\hspace{0.5cm}
			\subfloat[$p=1 + 1\text{e-}1$]{\includegraphics*[trim={0cm 0cm -0cm -0cm},clip,width=0.3\textwidth]{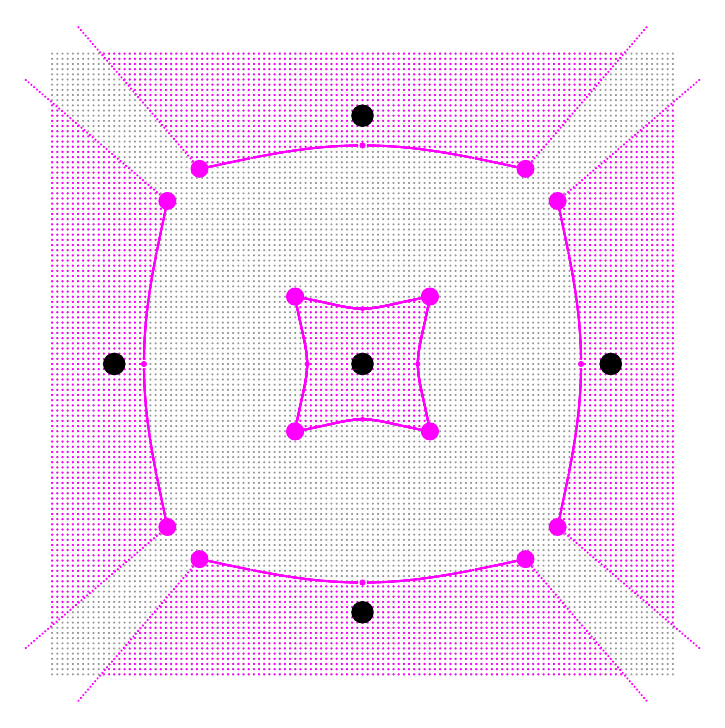}} \hspace{0.5cm}
			\subfloat[$p=2$]{\includegraphics*[trim={0cm 0cm -0cm -0cm},clip,width=0.3\textwidth]{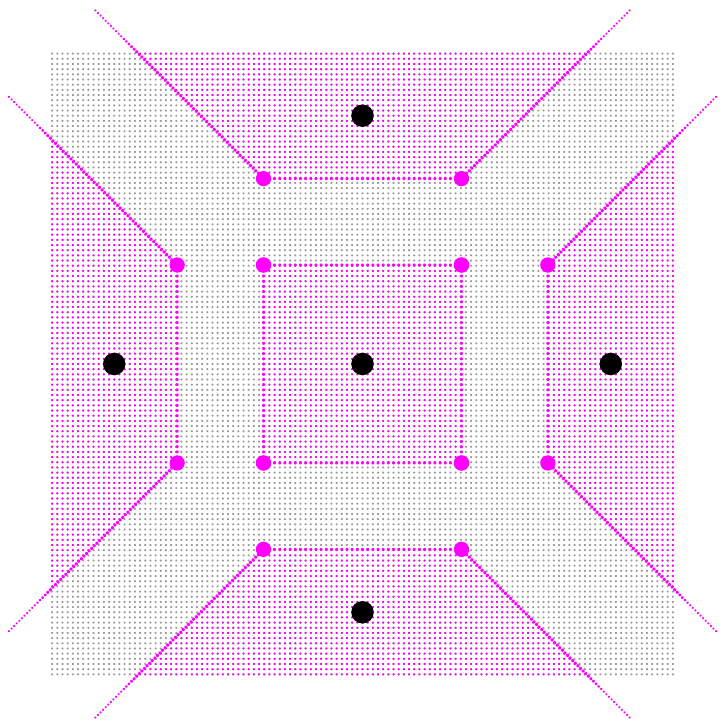}}\\
			\subfloat[$p=4$]{\includegraphics*[trim={0cm 0cm -0cm -0cm},clip,width=0.3\textwidth]{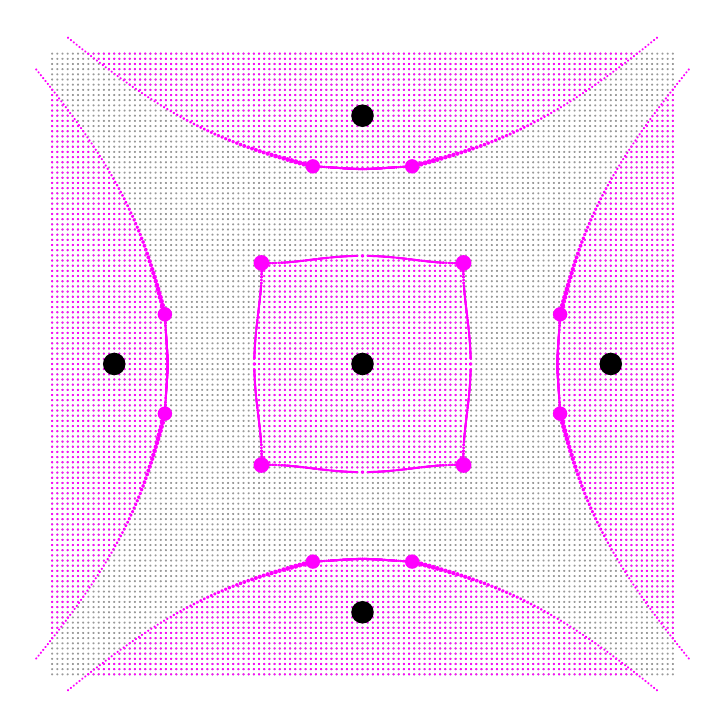}} \hspace{0.5cm}
			\subfloat[$p=11$]{\includegraphics*[trim={0cm 0cm -0cm -0cm},clip,width=0.3\textwidth]{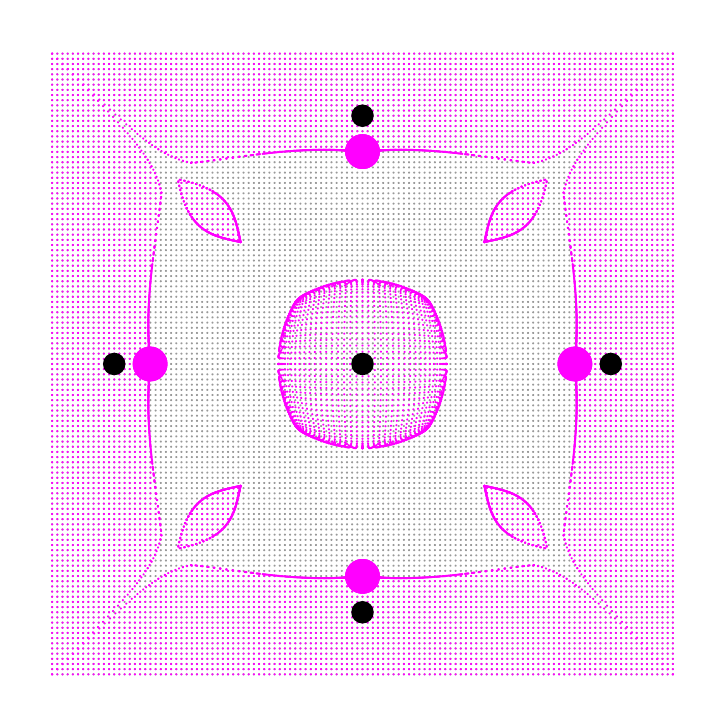}}
			\caption{Data $\mu$ (discretized, gray), $\nu$ (black), the optimal convex dominant $\ov\rho$ (magenta) for the cost $f = \abs{\argu}^p$. For visibility, a different and finer scale was applied to the measure $\nu$ in comparison to $\mu,\rho$.}
			\label{fig:diff_p}      
		\end{figure}
		
		Let us check for the convex order between  $\mu$ and $\nu$. Using the conic formulation as described above, we can compute the distance $Z_2(\mu,\nu) \approx 0.0233$, which is all that is required for calculating the convex-order index, 
		\begin{equation*}
				\alpha_{\succeq_c}(\nu\,|\,\mu)= \frac{m_2(\nu) - m_2(\mu)}{2 Z_2(\mu,\nu)} \approx  -0.8898.
		\end{equation*}
		As it is not $-1$ nor $1$, the two measures are not ordered in any direction. Since the index is much closer to $-1$, we deduce that '$\mu$ is not far from dominating $\nu$'. To put this in perspective, let us add that the convex order $\nu \preceq_c \mu$ would be revived if $\nu$ was contracted by moving the four masses towards the centre by less than $0.02$.
		
		Fig. \ref{fig:diff_p} shows the solutions for five different values of the exponent $p$. Based on these simulations, we may predict that for the exact $\mu$ being the Lebesgue measure, the solutions $\hat\rho$ consist of two-, one- and zero-dimensional parts. The 1D part charges curves that are partially the frontier of the two dimensional part, and partially they may extend beyond the square~$Q$. The Dirac delta parts seem to be positioned at the kinks of those curves. It should be emphasized that lower dimensional parts of $\hat\rho$ are to be expected even if both $\mu$ and $\nu$ are absolutely continuous and with smooth densities, cf. Remark \ref{rem:conv_dom}(a).
		The case where $p=2$ stands out with the simplicity of its geometry, see Fig. \ref{fig:diff_p}(c).  Recall that this $\hat\rho$ is precisely the Zolotarev projection $\ov\rho$ of $\mu$ onto the cone of convex dominants of $\nu$ (and \textit{vice versa}), cf. Proposition \ref{prop:Zol_proj}.
\end{example}

\subsection{Demonstration of the stabilization effect for the approximation of (MOT)}

In Section \ref{ssec:MOT_app} we have discussed the issue of the lack of stability of the martingale optimal transport problem $(\mathrm{MOT})$ with respect to the marginal probabilities $\mu,\nu$ in dimension $d\geq2$. This serious drawback of multidimensional $(\mathrm{MOT})$ has been exposed in \cite{bruckerhoff2022instability} with a simple two-dimensional counter-example. 

Below we use this example to demonstrate the essence of Theorem \ref{thm:approx_MOT_intro}, which by means of $\Gamma$-convergence guarantees that the sequence of plans $\gamma_n$ -- solving the approximating bi-martingale problems $(\mathrm{M^2OT}_n)$ posed for the marginals $\mu_n,\nu_n$ and for the penalty coefficient $\eps_n$ that tends to zero sufficiently slowly -- weakly converges to $\gamma \in \Gamma_{\mathrm{M}}(\mu,\nu)$ that solves $(\mathrm{MOT})$ for the limit data $\mu,\nu$. 
\captionsetup[subfigure]{labelformat=empty}
\begin{figure}[h]
	\centering
	\subfloat[\parbox{5cm}{$n=3, \ \  \iint \abs{x-y} \,d\tilde\gamma_n=1$}]{\includegraphics*[trim={0cm 0cm -0cm -0cm},clip,width=0.33\textwidth]{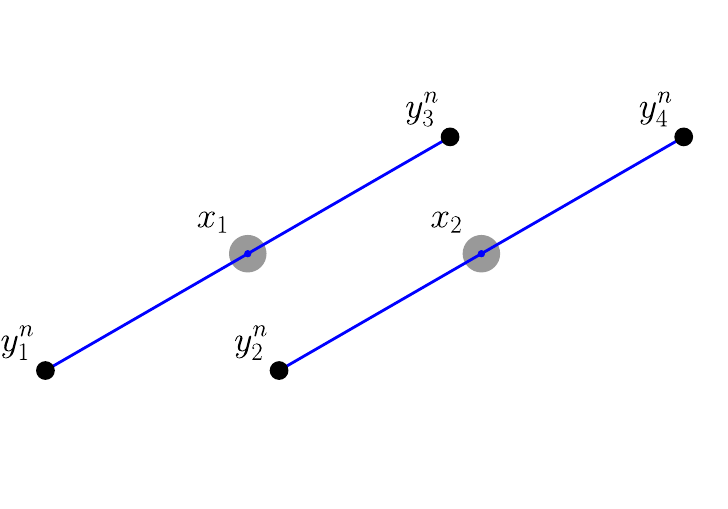}}\hspace{1.5cm}
	\subfloat[\parbox{5cm}{$n=3, \ \  \iint \abs{x-y} \,d\gamma_n \approx 0.9223$}]{\includegraphics*[trim={0cm 0cm -0cm -0cm},clip,width=0.33\textwidth]{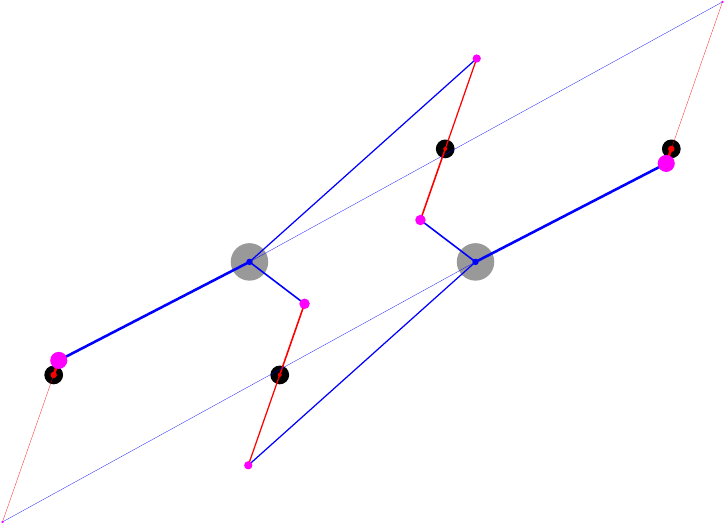}} \\
	\subfloat[\parbox{5cm}{$n=5, \ \  \iint \abs{x-y} \,d\tilde\gamma_n=1$}]{\includegraphics*[trim={0cm 0cm -0cm -0cm},clip,width=0.33\textwidth]{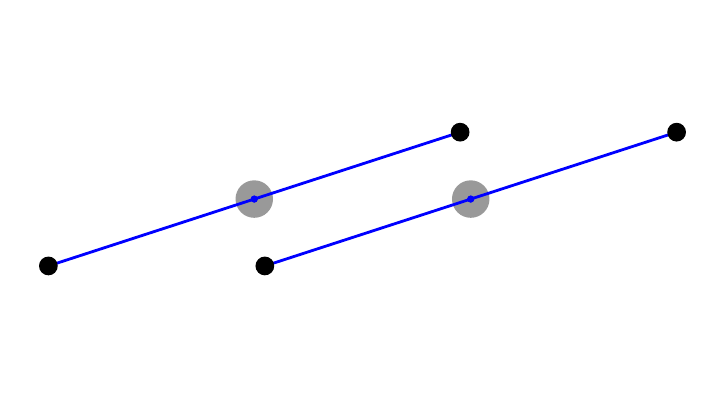}}\hspace{1.5cm}
	\subfloat[\parbox{5cm}{$n=5, \ \  \iint \abs{x-y} \,d\gamma_n \approx 0.8209$}]{\includegraphics*[trim={0cm 0cm -0cm -0cm},clip,width=0.33\textwidth]{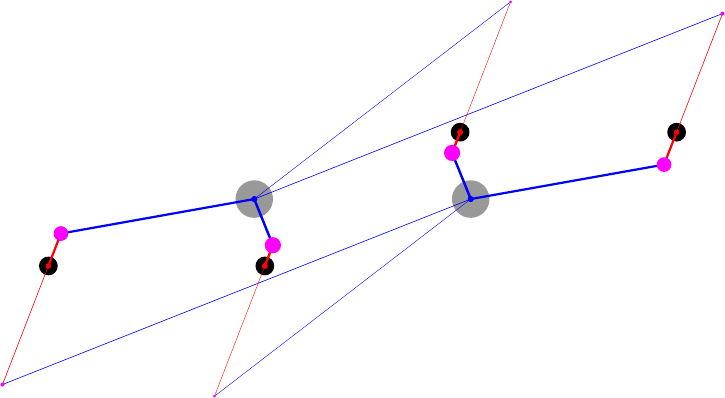}} \\
	\subfloat[\parbox{5cm}{$n=20, \ \  \iint \abs{x-y} \,d\tilde\gamma_n=1$}]{\includegraphics*[trim={0cm 0cm -0cm -0cm},clip,width=0.33\textwidth]{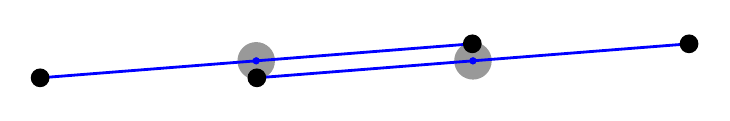}}\hspace{1.5cm}
	\subfloat[\parbox{5cm}{$n=20, \ \  \iint \abs{x-y} \,d\gamma_n \approx 0.6928$}]{\includegraphics*[trim={0cm 0cm -0cm -0cm},clip,width=0.33\textwidth]{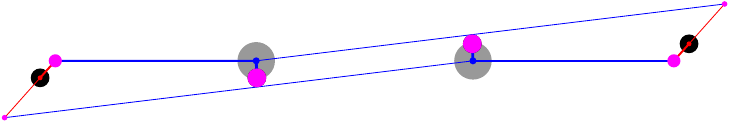}}
	\caption{Left column: optimal martingale transport (blue) between $\mu$ (gray) and  $\nu_n$ (black). Right column: bi-martingale transport (blue and red) solving the approximation $(\mathrm{M^2OT}_n)$ for the same data. In the right column the probability  $\rho_n = \zeta_n \# \gamma_n$ (magenta) is also displayed.}
	\label{fig:stability}       
\end{figure}

\begin{figure}[h]
\centering

\subfloat[]{\includegraphics*[trim={0cm 0cm -0cm -0cm},clip,width=0.45\textwidth]{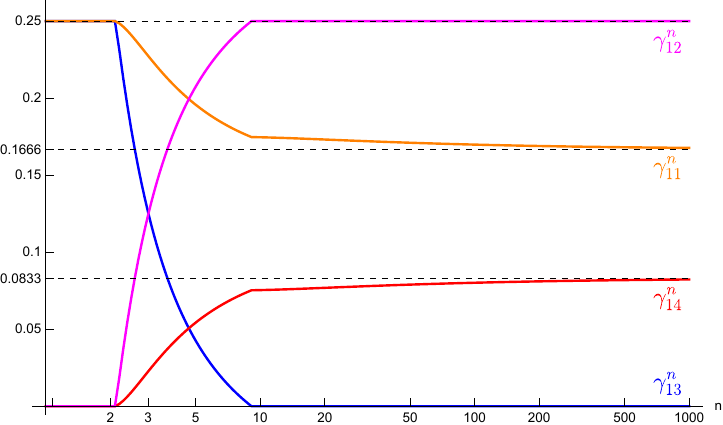}}\hspace{0.5cm}
\subfloat[]{\includegraphics*[trim={0cm 0cm -0cm -0cm},clip,width=0.45\textwidth]{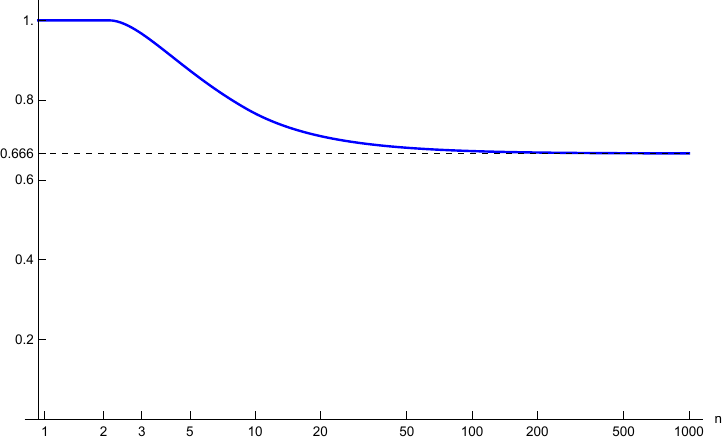}}
\caption{The sequence of solutions $\gamma_n = \sum_{i,j} \gamma^n_{ij}  \, \delta_{(x_i,y_j)} $ to the bi-martingale approximations $(\mathrm{M^2OT_n})$: (a) the transports emanating from the mass at $x_1$; (b) the total cost $\iint \abs{x-y} \,\gamma_n(dxdy)$  (without the penalizing term). }
\label{fig:conv}       
\end{figure}

\begin{example}
	\label{ex:stability}
	On the plane $\R^2$, we will consider the $(\mathrm{MOT})$ problem for the distance cost $c(x,y) = \abs{x-y}$ and the data $\mu = \sum_{i=1}^2\frac{1}{2}\,\delta_{x_i}$, $\nu = \sum_{j=1}^4 \frac{1}{4} \, \delta_{y_j}$, where $x_1 = (-\frac{1}{2},0)$, $x_2 = (\frac{1}{2},0)$ and  $y_1 = (-\frac{3}{2},0)$, $y_2 = (-\frac{1}{2},0)$, $y_3 = (\frac{1}{2},0)$, $y_4 = (\frac{3}{2},0)$.
	The problem is essentially one dimensional. The convex order $\mu \preceq_c \nu$ is easily verified, and the set of martingale plans  $\Gamma_{\mathrm{M}}(\mu,\nu)$ contains more than one element. The unique martingale plan $\gamma$ solving $(\mathrm{MOT})$ for the distance cost is the one that does not move the shared mass, i.e. $\gamma = \frac{1}{6}\delta_{(x_1,y_1)} +  \frac{1}{4}\delta_{(x_1,y_2)} + \frac{1}{12}\delta_{(x_1,y_4)} +  \frac{1}{12}\delta_{(x_2,y_1)} +  \frac{1}{4}\delta_{(x_2,y_3)} + \frac{1}{6}\delta_{(x_2,y_4)}$, cf. \cite{bruckerhoff2022instability} for details. The minimal martingale transport cost equals $\iint \abs{x-y} \, \gamma(dxdy) = \frac{2}{3}$. 

	After \cite{bruckerhoff2022instability}, we will now build a sequence $\nu_n$ that weakly converges to $\nu$ (also in metrics $W_2$ or $Z_2$), cf. Fig. \ref{fig:stability}. With  $R_{\theta_n} e_1$ being the anti-clockwise rotation of $e_1=(1,0)$ by the angle $\theta_n:= \frac{\pi}{2n}$, we  define $ \nu_n = \sum_{j=1}^4 \frac{1}{4} \delta_{y_j^n}$, where,
	\begin{equation*}
	 y_1^n = x_1 - R_{\theta_n}e_1, \quad 	y_2^n = x_2 - R_{\theta_n}e_1, \quad y_3^n = x_1 + R_{\theta_n}e_1, \quad 	y_4^n = x_2 + R_{\theta_n}e_1,
	\end{equation*}
	see Fig. \ref{fig:stability}(a). We see that $y_j^n \to y_j$, hence $\nu_n \rightharpoonup \nu$. Moreover, the following is a martingale plan,
	\begin{equation*}
		\tilde{\gamma}_n = \frac{1}{4} \Big( \delta_{(x_1,y_1^n)} + \delta_{(x_1,y_3^n)} + \delta_{(x_2,y_2^n)} + \delta_{(x_2,y_4^n)} \Big) \ \in \  \Gamma_{\mathrm{M}}(\mu,\nu_n).
	\end{equation*}
	
	The core idea behind this example lies in the following observation \cite[Lemma 1.1]{bruckerhoff2022instability}: for any  $n \geq 1$ the set of martingale plans between $\mu$ and $\nu^\theta$ is a singleton, namely $\Gamma_{\mathrm{M}}(\mu,\nu_n) = \{ \tilde\gamma_n\}$, rendering the relevant martingale optimal transport problem trivial. The left column of Fig. \ref{fig:stability} present these trivial solutions $\tilde\gamma_n \in \Gamma_{\mathrm{M}}(\mu,\nu_n)$ for several $n$. The minimal transport cost is constant across all $n$, namely $\iint \abs{x-y} \, \tilde{\gamma}_n(dxdy) = 1$. Clearly, $\tilde{\gamma}_n$ weakly converges to $\tilde{\gamma} = \frac{1}{4} \big( \delta_{(x_1,y_1)} + \delta_{(x_1,y_3)} + \delta_{(x_2,y_2)} + \delta_{(x_2,y_4)} \big)$, being a martingale plan for the limit data, that is $\tilde{\gamma} \in \Gamma_{\mathrm{M}}(\mu,\nu)$. This is not the plan $\gamma$ which, as established above, is the unique optimal solution for $\mu,\nu$. In fact, $	\iint \abs{x-y} \,\tilde\gamma(dxdy) = 1 > \frac{2}{3} = 	\iint \abs{x-y} \,\gamma(dxdy).$
	
	\smallskip

	Let us now jump to the sequence of bi-martingale problems $(\mathrm{M^2OT_n})$ for the sequence of data $\mu_n \equiv \mu$, $\nu_n$. To define the problems, we must pick a sequence of positive numbers $\eps_n$ converging to zero. In order to meet the prerequisites of Theorem \ref{thm:approx_MOT_intro}, we have to ensure that $Z_2(\nu,\nu_n)/\eps_n \to 0$. Computing $Z_2(\nu,\nu_n)$ numerically can be by-passed via the $W_2$ upper bound, cf. \eqref{eq:rio} and \cite{bolbotowski2025},
	\begin{equation*}
		Z_2(\nu,\nu_n) \leq \frac{1}{2} (\sigma_{\nu}+ \sigma_{\nu_n}) \, W_2(\nu,\nu_n) \leq \frac{\sqrt{5}}{2} \big(2 \sin(\theta_n/2)\big) \leq   \frac{\sqrt{5}}{2} \theta_n =  \frac{\sqrt{5} \pi}{4n} =: \eps_n^2.
	\end{equation*}
	Above we computed the variances $\sigma_{\nu_n} = \sigma_ \nu = \frac{\sqrt{5}}{2}$, whilst the Wasserstein distance was estimated using the transport maps $T_n(y_j) = y_j^n$. The above choice, i.e. $\eps_n \approx 1.325\, n^{-1/2} \geq \sqrt{Z_2(\nu,\nu_n)}$ guarantees that  $Z_2(\nu,\nu_n)/\eps_n \to 0$.
	
	Readily, Theorem \ref{thm:approx_MOT_intro} states that, for the sequence of solution $({\gamma}_n,{q}_n) = ({\gamma}_n,\zeta_n{\gamma}_n) $ to $(\mathrm{M^2OT_n})$, the plans $\gamma_n$ weakly converge to a solution of $(\mathrm{MOT})$ for the limit data $\mu,\nu$. Since we know that for this data the solution is unique and equals $\gamma$, we deduce,
	\begin{align*}
		\gamma_n = \sum_{i=1}^2 \sum_{j=1}^4 \gamma^n_{ij}  \, \delta_{(x_i,y_j)} \quad \rightharpoonup  \quad \gamma =  \tfrac{1}{6}\delta_{(x_1,y_1)} &+  \tfrac{1}{4}\delta_{(x_1,y_2)} + \tfrac{1}{12}\delta_{(x_1,y_4)} \\
		&+  \tfrac{1}{12}\delta_{(x_2,y_1)} +  \tfrac{1}{4}\delta_{(x_2,y_3)} + \tfrac{1}{6}\delta_{(x_2,y_4)},
	\end{align*}
	as well as the convergence of the cost: $\iint \abs{x-y} \,\gamma_n(dxdy) \to \iint \abs{x-y} \,\gamma(dxdy) = \frac{2}{3}$. For each $n$ ranging to $10^3$, the problem $(\mathrm{M^2OT_n})$ was numerically solved by employing the conic formulation described in Section \ref{ssec:conic}. The plots in Fig. \ref{fig:conv} show: (a) the transports $\gamma_{1j}^n$ emanating from the mass $x_1$; and (b) the cost $\iint \abs{x-y} \,\gamma_n(dxdy)$. The foregoing convergences can be observed. In addition, for chosen $n$, the right column in Fig. \ref{fig:stability} presents the induced martingale plans $\gamma_{1}^n = (\pi_1,\zeta_n) \# \gamma_n$ (blue) and $\gamma_{2}^n = (\pi_2,\zeta_n) \# \gamma_n$ (red), as well as their common second marginal $\rho_n = \zeta_n \# \gamma_n$ (magenta).
\end{example}

\bibliographystyle{plain}

\bigskip

\noindent
Karol Bo{\l}botowski\\
Faculty of Mathematics, Informatics and Mechanics, University of Warsaw\\
2 Banacha Street, 02-097 Warsaw - POLAND\\
{\tt k.bolbotowski@mimuw.edu.pl}

\end{document}